
\documentclass[12pt]{amsart}
\usepackage{amssymb,bbold,mathrsfs}
\usepackage[all]{xy}

\textwidth = 440pt
\textheight = 635pt
\hoffset = -50pt
\voffset = -35pt
\marginparwidth = 58pt

\swapnumbers

\theoremstyle{plain}
\newtheorem{theorem}{Theorem}[section]
\newtheorem{corollary}[theorem]{Corollary}
\newtheorem{lemma}[theorem]{Lemma}
\newtheorem{proposition}[theorem]{Proposition}

\theoremstyle{definition}

\newtheorem{remark}[theorem]{Remark}

\newtheorem{numbered}[theorem]{}
\newtheorem{example}[theorem]{Example}

\theoremstyle{remark}

\newcommand{\abs}[1]{\lvert#1\rvert}
\newcommand{\norm}[1]{\lVert#1\rVert}

\newcommand{\bignorm}[1]{\bigl\lVert#1\bigr\rVert}

\renewcommand{\le}{\leqslant}
\renewcommand{\ge}{\geqslant}

\newcommand{\term}[1]{{\textit{\textbf{#1}}}}

\renewcommand{\mid}{\::\:}

\def\one{\mathbb 1}

\def\iA{\mathcal A}
\def\iJ{\mathcal J}
\def\iS{\mathscr{S}}
\def\iP{\mathscr{P}}
\def\iG{\mathcal G}
\def\Rplus{\overline{\mathbb R^+}}

\DeclareMathOperator{\Range}{Range}
\DeclareMathOperator{\Span}{span}

\DeclareMathOperator{\rank}{rank}
\DeclareMathOperator{\minrank}{min\,rank}

\hyphenation{Archi-medean}

\makeindex

\begin{document}
\baselineskip 18pt

\title[Irreducible semigroups]{Irreducible semigroups of positive
  operators on Banach lattices}

\author[N.Gao]{Niushan Gao}
\author[V.G.Troitsky]{Vladimir G. Troitsky}
\address{Department of Mathematical
  and Statistical Sciences, University of Alberta, Edmonton,
  AB, T6G\,2G1. Canada}
\email{niushan@ualberta.ca, troitsky@ualberta.ca}
\thanks{The second author was supported by NSERC}
\keywords{irreducible semigroup, positive operator, invariant ideal,
  Riesz operator}
\subjclass[2010]{Primary: 47D03. Secondary: 47B65, 47A15, 47B06.}

\date{\today}

\begin{abstract}
  The classical Perron-Frobenius theory asserts that an irreducible
  matrix $A$ has cyclic peripheral spectrum and its spectral radius
  $r(A)$ is an eigenvalue corresponding to a positive eigenvector. In
  \cite{Radjavi:99,Radjavi:00}, this was extended to semigroups of
  matrices and of compact operators on $L_p$-spaces. We extend this
  approach to operators on an arbitrary Banach lattice $X$. We prove,
  in particular, that if $\iS$ is a commutative irreducible semigroup
  of positive operators on $X$ containing a compact operator $T$ then
  there exist positive disjoint vectors $x_1,\dots,x_r$ in $X$ such
  that every operator in $\iS$ acts as a positive scalar multiple of a
  permutation on $x_1,\dots,x_r$. Compactness of $T$ may be replaced
  with the assumption that $T$ is peripherally Riesz, i.e., the
  peripheral spectrum of $T$ is separated from the rest of the
  spectrum and the corresponding spectral subspace $X_1$ is finite
  dimensional. Applying the results to the semigroup generated an
  irreducible peripherally Riesz operator $T$, we show that $T$ is a
  cyclic permutation on $x_1,\dots,x_r$, $X_1=\Span\{x_1,\dots,x_r\}$,
  and if $S=\lim_j b_jT^{n_j}$ for some $(b_j)$ in $\mathbb R_+$ and
  $n_j\to\infty$ then $S=c(T_{|X_1})^k\oplus 0$ for some $c\ge 0$ and
  $0\le k<r$. We also extend results of
  \cite{Abramovich:92a,Grobler:95} about peripheral spectra of
  irreducible operators.
\end{abstract}

\maketitle

\section{Introduction}

Recall that a square matrix $A$ with non-negative entries is said to
be irreducible if no permutation of the basis vectors brings it to a
block form
\begin{math}
  \bigl[
  \begin{smallmatrix}
    A_{11} & A_{12}\\
    0      & A_{22}
  \end{smallmatrix}
  \bigr].
\end{math}
The classical Perron-Frobenius theory (see, e.g.,
\cite[Theorem~8.26]{Abramovich:02}) asserts that for such a matrix,
its spectral radius $r(A)$ is non-zero, its peripheral eigenvalues
(the ones whose absolute value is $r(A)$) are exactly the $m$-th roots
of unity for some $m\in\mathbb N$, and the corresponding eigenspaces
are one dimensional. Moreover, the eigenspace for $r(A)$ itself is
spanned by a vector whose coordinates are all positive.

There have been numerous extensions and generalizations of
Perron-Frobenius Theory. In particular, instead of a positive matrix,
one can consider a positive operator on a Banach lattice, or even a
family of positive operators. We say that such a family is \term{ideal
  irreducible} if it has no common invariant closed non-zero proper
ideals; it is \term{band irreducible} if it has no common invariant
proper non-zero bands. In particular, a positive operator is ideal
irreducible (band irreducible) if it has no invariant proper non-zero
closed ideals (respectively, bands). It is easy to see that in case of
a single positive matrix, these definitions coincide with
irreducibility. We refer the reader to~\cite{Abramovich:02} for
details and terminology on Banach lattices and irreducible operators.

There have been many extensions of Perron-Frobenius Theory to ideal or
band irreducible operators on Banach lattices; see, e.g.,
\cite{Niiro:66,Schaefer:74,dePagter:86,Grobler:95,Abramovich:02,Kitover:05}
etc., and references there.
In most of these extensions, it is assumed that the operator is
compact, or \term{power compact} (i.e., some power of it is compact),
or, at least, the spectral radius is a pole of the resolvent. There
have also been some extensions to semigroups of positive operators.
For example, Drnov\v sek in~\cite{Drnovsek:01} proved that an ideal
irreducible semigroup of compact positive operators must contain a
non-quasi\-nilpotent operator.

In \cite{Radjavi:99} and in Sections~5.2 and~8.7 of~\cite{Radjavi:00},
a different approach was used to extend Perron-Frobenius Theory from a
single irreducible matrix to an irreducible semigroups of matrices or
of compact operators on $L_p(\mu)$ ($1\le
p<+\infty$). In~\cite{Levin:09}, this approach was applied to order
continuous Banach lattices. In the current paper, we extend it to
arbitrary Banach lattices. Some of the ideas we use are parallel to
those used in~\cite{Radjavi:99,Radjavi:00}, but in many cases we had
to develop completely new techniques.  Some of our results are new
even in the case of $L_p(\mu)$ and in the single operator
case. Moreover, we weaken the condition that the semigroup consists
entirely of compact operators; we only require that the semigroup
contains a compact or even a peripherally Riesz operator. An operator
$T$ is said to be \term{peripherally Riesz} if its \term{peripheral
  spectrum}
\begin{math}
  \sigma_{\rm per}(T)=
  \bigl\{\lambda\in\sigma(T)\mid\abs{\lambda}=r(T)\bigr\}
\end{math}
consists of isolated eigenvalues of finite multiplicity.  This class
contains all non-quasi\-nilpotent compact and strictly singular
operators.

The paper is structured as follows. Recall that a set of positive
operators is $\Rplus$\term{-closed} if is norm closed and it is closed
under multiplication by positive scalars. Since taking the
$\Rplus$-closure of a semigroup does not affect its invariant closed
ideals, we may assume without loss of generality that our semigroups
are $\Rplus$-closed. In Section~\ref{sec:pR} we show that the
$\Rplus$-closed semigroup generated by a peripherally Riesz operator
either contains the peripheral spectral projection of the operator or
a non-zero nilpotent operator of small finite rank. We use this in
Section~\ref{sec:fin-rank} to show that if $\iS$ is an $\Rplus$-closed
ideal irreducible semigroup and $\iS$ contains a peripherally Riesz or
a compact operator, then it contains operators of finite
rank. Moreover, it contains ``sufficiently many'' projections of rank
$r$, where $r$ is the minimal non-zero rank of operators in $\iS$.  In
Section~\ref{sec:sr} we discuss the special case when all such
projections have the same range (this is the case when $\iS$ is
commutative, in particular, when $\iS$ is generated by a single
operator). We show that, in this case, there are disjoint
vectors $x_1,\dots,x_r$ in $X_+$ such that each operator in the
semigroup acts on these vectors as a scalar multiple of a
permutation. In particular, $x_0:=x_1+\dots+x_r$ is a common
eigenvector for $\iS$. In Section~\ref{sec:up} we show that the dual
semigroup $\{S^*\mid S\in\iS\}$ has the same properties under the
somewhat stronger condition that $\iS$ has a unique projection of rank
$r$ (which is still satisfied when $\iS$ is commutative). In
Section~\ref{sec:app}, we apply our results to finitely generated
semigroups. We completely characterize $\iS$ in the case when it is
generated by a single peripherally Riesz ideal irreducible operator
$T$; we show that $T$ acts as a scalar multiple of a cyclic
permutation of $x_1,\dots,x_r$. We improve
\cite[Corollary~9.21]{Abramovich:02} that if $S$ and $K$ are two
positive commuting operators such that $K$ is compact and $S$ is ideal
irreducible then $r(K)>0$ and $r(S)>0$; we show that in this case
$\lim_n\norm{K^nx}^{\frac{1}{n}}=r(K)$ and
$\liminf_n\norm{S^nx}^{\frac{1}{n}}>0$ whenever $x>0$. In
Section~\ref{sec:bi}, we extend the results of the preceding sections
to band irreducible semigroups of order continuous operators. In
particular, it allows us to improve Grobler's characterization of the
peripheral spectrum of a band irreducible power compact operator
in~\cite{Grobler:95}. Finally, in Section~\ref{sec:ideals} we
investigate the structure of one-sided ideals in an irreducible
semigroup.

\section{Peripherally Riesz operators}
\label{sec:pR}

Given a set $\iA$ of operators on a Banach space $X$, we write
$\Rplus\iA$ for the smallest $\Rplus$-closed semigroup containing
$\iA$.  In particular, if $T$ is an operator on $X$, we will write
$\Rplus T$ for the $\Rplus$-closed semigroup generated by
$T$. Clearly, $\Rplus T$ consists of all positive scalar multiples of
powers of $T$ and of all the operators of form $\lim_jb_jT^{n_j}$
for some sequence $(b_j)$ in $\mathbb R_+$ and some strictly
increasing sequence $(n_j)$ in $\mathbb N$; these limit operators form
the \term{asymptotic} part of $\Rplus T$.

Given a semigroup $\iS$ in $L(X)$, we will denote by $\minrank\iS$ the
minimal rank of non-zero elements of $\iS$; we write
$\minrank\iS=+\infty$ if $\iS$ contains no non-zero operators of
finite rank. Note that if $T\in\iS$ then the ideal generated by $T$
in $\iS$ consists of all the operators of form $ATB$ where
$A,B\in\iS\cup\{I\}$.

A vector $u\in\mathbb C^n$ is said \term{unimodular} if $\abs{u_i}=1$
for all $i=1,\dots,n$. Let $\mathbb U_n$ denote the set of all
unimodular vectors in $\mathbb C^n$. Clearly $\mathbb U_n$ is a group
with respect to the coordinate-wise product, with unit $\one=(1,\dots,1)$.
We will need the following standard lemma.

\begin{lemma}\label{Kron}
  If $u\in\mathbb U_n$ then there exists a strictly increasing sequence
  $(m_j)$ in $\mathbb N$ such that $u^{m_j}\to\one$.
\end{lemma}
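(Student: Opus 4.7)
The plan is to use compactness of the torus together with the group structure of $\mathbb U_n$. First I would observe that $\mathbb U_n$, identified with the product of $n$ copies of the unit circle, is a compact subset of $\mathbb C^n$, and that multiplication is continuous and componentwise, with the inverse of any unimodular vector given by its componentwise complex conjugate. The sequence $(u^m)_{m\in\mathbb N}$ therefore lives in this compact set, so by Bolzano-Weierstrass there is a strictly increasing sequence $(N_k)$ in $\mathbb N$ and some $v\in\mathbb U_n$ with $u^{N_k}\to v$.

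Next I would exploit that a convergent sequence is Cauchy: for the differences $p_k:=N_{k+1}-N_k$, which are strictly positive integers, one has
\[
u^{p_k}=u^{N_{k+1}}\cdot\overline{u^{N_k}}\longrightarrow v\cdot\overline v=\one,
\]
where the overline denotes componentwise conjugation and we use that each coordinate of $u^{N_k}$ is unimodular so its inverse is its conjugate. Thus we already have a sequence of positive integers $(p_k)$ with $u^{p_k}\to\one$; only strict monotonicity of the exponents is not yet guaranteed.

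To finish, I would split into two cases. If $(p_k)$ is unbounded, I simply extract a strictly increasing subsequence $(m_j)$ from it, and $u^{m_j}\to\one$ is automatic. If $(p_k)$ is bounded, then by the pigeonhole principle some value $p$ is repeated infinitely often, so $u^p=\one$, and one may take $m_j=jp$ to obtain a strictly increasing sequence with $u^{m_j}=\one\to\one$. Either way the desired sequence exists.

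No step here is genuinely difficult; the only thing to be careful about is the bookkeeping at the end to upgrade ``$(p_k)$ is a sequence of positive integers with $u^{p_k}\to\one$'' to ``$(m_j)$ is a strictly increasing sequence.'' Everything else is standard compactness-plus-group-structure.
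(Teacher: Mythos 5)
Your argument is correct and is essentially the same as the paper's: both use compactness of $\mathbb U_n$ to extract a convergent subsequence $u^{N_k}\to v$ and then pass to the differences of exponents to get $u^{N_{k+1}-N_k}\to v v^{-1}=\one$. The only (cosmetic) difference is in securing strict monotonicity of the exponents: the paper simply passes to a further subsequence of $(N_k)$ so that the consecutive differences are strictly increasing, whereas you handle it by a bounded/unbounded case split; both work.
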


\begin{proof}
  Since $\mathbb U_n$ is compact, we can find a subsequence
  $u^{k_j}\to v$ for some $v\in\mathbb U_n$. Passing to a subsequence,
  we may assume that $m_j:=k_{j+1}-k_j$ is strictly increasing. Then
  $u^{m_j}=u^{k_{j+1}}u^{-k_j}\to vv^{-1}=\one$.
\end{proof}

A square matrix $A$ is \term{unimodular} if there is a basis in which
it is diagonal and the diagonal is a unimodular vector. It follows
from Lemma~\ref{Kron} that in this case $(A^{m_j})$ converges to the
identity matrix.

\begin{numbered}\label{matrix-dych}
  The following observation is based on Lemma~1 of~\cite{Radjavi:99}
  and is critical for our study. Let $A$ be a square matrix with
  $r(A)=1$ and $\sigma(A)=\sigma_{\rm per}(A)$. Using Jordan
  decomposition of $A$, we can write $A=U+N$ where $U$ is unimodular,
  $N$ is nilpotent, and $UN=NU$. By Lemma~\ref{Kron}, we can find a
  strictly increasing sequence $(m_j)$ such that $U^{m_j}\to I$.

  \emph{Case $N=0$.} In this case, $A=U$, so that
  $A^{m_j}\to I$.

  \emph{Case $N\ne 0$.} Let $k$ be such that $N^k\ne 0$ but
  $N^{k+1}=0$. Then
  \begin{equation}\label{binom}
    A^n=(U+N)^n=
    U^n+\binom{n}{1}U^{n-1}N+\dots+\binom{n}{k}U^{n-k}N^k.
  \end{equation}
  Note that $\lim_n\binom{n}{i}/\binom{n}{k}=0$ whenever
  $i<k$. Therefore, if we divide \eqref{binom} by $\binom{n}{k}$, then
  every term in the sum except the last one converges to zero as
  $n\to\infty$. Denote $r_j=m_j+k$ and $c_j=1/\binom{r_j}{k}$, then
  \begin{math}
    \lim_jc_jA^{r_j}=\lim_jU^{r_j-k}N^k=N^k.
  \end{math}
  We can now summarize as follows.
\end{numbered}

\begin{proposition}\label{dych-id-nilp}
  Let $A$ be a square matrix with $r(A)=1$ and $\sigma(A)=\sigma_{\rm
    per}(A)$. Then exactly one of the following holds:
  \begin{enumerate}
  \item $A$ is unimodular and $A^{m_j}\to I$ for some strictly increasing
    sequence $(m_j)$ in $\mathbb N$; or
  \item There exist a strictly increasing sequence $(r_j)$ in $\mathbb N$ and a
    sequence $(c_j)$ in $\mathbb R_+$ such that $c_j\downarrow 0$ and
    $c_j A^{r_j}$ converges to a non-zero nilpotent (even square-zero) matrix.
   \end{enumerate}
\end{proposition}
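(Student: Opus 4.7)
The plan is to observe that the paragraph \ref{matrix-dych} has essentially done all the work; the proposition merely packages that computation into a clean dichotomy. So the proof I would write is a short verification along the following lines.

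First, I would invoke the Jordan decomposition: since $\sigma(A)=\sigma_{\rm per}(A)$ and $r(A)=1$, every eigenvalue has modulus one, so we may write $A=U+N$ with $U$ diagonalizable with unimodular spectrum (i.e.\ unimodular in the sense defined before the proposition), $N$ nilpotent, and $UN=NU$. This is exactly the setup of \ref{matrix-dych}.

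Second, I would split on whether $N=0$. If $N=0$, then $A=U$ is unimodular, and Lemma~\ref{Kron} applied to the diagonal of $U$ yields a strictly increasing $(m_j)$ in $\mathbb N$ with $U^{m_j}\to I$, giving~(i). If $N\ne 0$, let $k\ge 1$ be the largest integer with $N^k\ne 0$, and follow the expansion \eqref{binom} displayed in \ref{matrix-dych}: dividing by $\binom{n}{k}$ kills every term except the last, and choosing $r_j=m_j+k$ together with $c_j=1/\binom{r_j}{k}$ as in the observation gives $c_jA^{r_j}\to N^k$. Since $k\ge 1$ and $N^{k+1}=0$, we have $(N^k)^2=N^{2k}=0$, so $N^k$ is a nonzero square-zero matrix, and $c_j\downarrow 0$ because $\binom{r_j}{k}\to\infty$ monotonically in $r_j$. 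This yields~(ii).

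Third, I would verify that the two alternatives are mutually exclusive. If $A$ is unimodular then $N=0$ in its Jordan decomposition, so $(A^n)$ is a bounded sequence; multiplying by any $c_j\downarrow 0$ would force $c_jA^{r_j}\to 0$, ruling out~(ii). Conversely, if~(ii) holds then $N\ne 0$, so $A$ is not diagonalizable with unimodular spectrum and~(i) fails in the strong sense required. There is no real obstacle here; the only mild point to be careful about is recording that $N^k$ is actually square-zero (which needs $k\ge 1$, guaranteed by $N\ne 0$) and that $c_j$ can be chosen to decrease to $0$ monotonically, which is immediate from the explicit formula $c_j=1/\binom{r_j}{k}$ and the fact that $(r_j)$ may be taken strictly increasing.
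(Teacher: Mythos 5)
Your proposal is correct and follows exactly the paper's own argument: the paper derives the proposition directly from the computation in \ref{matrix-dych} (Jordan decomposition $A=U+N$, Lemma~\ref{Kron} in the case $N=0$, and the binomial expansion \eqref{binom} divided by $\binom{n}{k}$ in the case $N\ne 0$), which is precisely what you do. Your added remark on mutual exclusivity is a harmless elaboration of what the paper leaves implicit.
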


We will refer to these two cases as ``unimdular'' and
``nilpotent''. In the unimodular case, it is easy to see that every
operator in $\Rplus A$ is a scalar multiple of a unimodular
operator. The following proposition describes the asymptotic part of
$\Rplus A$ in the nilpotent case.

\begin{proposition}\label{matrix-nilp-sg}
  Let $A$ be a square matrix with $r(A)=1$ and $\sigma(A)=\sigma_{\rm
    per}(A)$. Suppose that the nilpotent part $N$ of the Jordan
  decomposition of $A$ is non-zero. If
  $B=\lim_jb_jA^{n_j}$ with $(n_j)$ strictly increasing, then $B$ is
  nilpotent (even square-zero) and $b_j\to 0$.
\end{proposition}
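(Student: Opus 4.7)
The approach is to analyze the binomial expansion~\eqref{binom} and isolate its dominant term. Let $k\ge 1$ satisfy $N^k\ne 0$ and $N^{k+1}=0$; such $k$ exists because $N\ne 0$ by hypothesis. Since $U$ is unimodular, writing $U=PDP^{-1}$ with $D$ a unimodular diagonal matrix shows that the orbit $\{U^m\mid m\in\mathbb Z\}$ is contained in $P\cdot(\text{compact torus})\cdot P^{-1}$, hence is relatively compact with all accumulation points invertible and commuting with $N$. Passing to a subsequence of $(n_j)$ we may therefore assume $U^{n_j-i}\to V_i$ for each $i=0,1,\dots,k$, where each $V_i$ is invertible and commutes with $N$.

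Dividing \eqref{binom} by $\binom{n_j}{k}$ and noting that $\binom{n_j}{i}/\binom{n_j}{k}\to 0$ for $i<k$ while each factor $U^{n_j-i}N^i$ stays bounded, I obtain
\begin{equation*}
  \frac{A^{n_j}}{\binom{n_j}{k}}\;\longrightarrow\; V_kN^k,
\end{equation*}
which is non-zero since $V_k$ is invertible and $N^k\ne 0$. In particular $\norm{A^{n_j}}$ grows at rate $\binom{n_j}{k}\to\infty$. Now suppose toward a contradiction that $b_j\not\to 0$; after thinning we may assume $b_j\ge\varepsilon>0$, and then $\norm{b_jA^{n_j}}\to\infty$, contradicting $b_jA^{n_j}\to B$. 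Hence $b_j\to 0$.

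The same reasoning forces $b_j\binom{n_j}{k}$ to be bounded: otherwise along a subsequence $\norm{b_jA^{n_j}}=b_j\binom{n_j}{k}\cdot\norm{A^{n_j}/\binom{n_j}{k}}\to\infty$. After a further thinning assume $b_j\binom{n_j}{k}\to c\ge 0$. For $i<k$,
\begin{equation*}
  b_j\binom{n_j}{i}=b_j\binom{n_j}{k}\cdot\frac{\binom{n_j}{i}}{\binom{n_j}{k}}\;\longrightarrow\; c\cdot 0=0,
\end{equation*}
so multiplying~\eqref{binom} by $b_j$ and passing to the limit gives $B=cV_kN^k$. Because $V_k$ and $N$ commute, $B^2=c^2V_k^2N^{2k}=0$ (as $2k\ge k+1$), so $B$ is square-zero, as claimed.

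The only substantive ingredient is the relative compactness of $\{U^m\}$ and the invertibility of its accumulation points; both follow at once from diagonalizability of $U$ with unit-modulus eigenvalues. The remainder is bookkeeping on the binomial sum, driven by the fact that $\binom{n_j}{k}U^{n_j-k}N^k$ is the unique asymptotically dominant term in the expansion of $A^{n_j}$.
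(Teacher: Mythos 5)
Your proof is correct and follows essentially the same route as the paper: both arguments divide the binomial expansion \eqref{binom} by $\binom{n}{k}$, observe that the term $U^{n-k}N^k$ dominates, deduce that $\bigl(b_j\binom{n_j}{k}\bigr)$ is bounded and $b_j\to 0$, and then square the limit using $UN=NU$ and $N^{2k}=0$. The only difference is cosmetic: you control the powers of $U$ via relative compactness of the unimodular orbit and a subsequence extraction, whereas the paper picks a norm in which $U$ is an isometry, so that $\norm{U^{n-k}N^k}$ is constant and no passage to subsequences is needed.
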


\begin{proof}
  We will use the notations of~\ref{matrix-dych}.
  Recall that the matrix $U$ is unimodular with respect to some basis
  $e_1,\dots,e_n$. For $x=\sum_{i=1}^nx_ie_i$, put
  $\norm{x}=\sum_{i=1}^n\abs{x_i}$. Clearly, this is a norm on
  $\mathbb R^n$ and $U$ is an isometry with respect to this norm.  It
  follows from~\eqref{binom} that $\binom{n}{k}^{-1}A^n-U^{n-k}N^k\to
  0$ as $n\to\infty$.  Since $U$ is an isometry and $N^k\ne 0$, the
  sequence $\bigl(\norm{U^{n-k}N^k}\bigr)_n$, and therefore
  $\bigl(\binom{n}{k}^{-1}\norm{A^n}\bigr)_n$, is bounded above and
  bounded away from zero. It follows from $b_jA^{n_j}\to B$ that the
  sequence $\bigl(b_j\binom{n_j}{k}\bigr)_j$ is bounded, hence $b_j\to
  0$. It also follows that $b_j\binom{n_j}{k}U^{n_j-k}N^k\to B$ so
  that $B^2=\lim_j\Bigl(b_j\binom{n_j}{k}U^{n_j-k}N^k\Bigr)^2=0$
  because $UN=NU$ and $N^{2k}=0$.
\end{proof}

Let $T$ be an operator on a Banach space $X$. Recall that $T$ is said
to be \term{Riesz} if its non-zero spectrum consists of isolated
eigenvalues with finite-dimensional spectral subspaces. Equivalently,
the essential spectral radius $r_{\rm ess}(T)$ is zero. In particular,
compact and strictly singular operators are Riesz.  We will be
interested in the asymptotic part of $\Rplus T$, and it is really only
determined by the restriction of $T$ to its \term{peripheral spectral
  subspace}, i.e., the spectral subspace corresponding to $\sigma_{\rm
  per}(T)$. This motivates the following definition: we say that $T$
is \term{peripherally Riesz} if $r(T)>0$, $\sigma_{\rm per}(T)$ is a
spectral set (i.e., it is separated from the rest of the spectrum),
and the peripheral spectral subspace is finite-dimensional. It is
often convenient to assume, in addition, that $r(T)=1$; this can
always be achieved by scaling $T$. Note that $T$ is peripherally Riesz
iff $r_{\rm ess}(T)<r(T)$; in this case, $\sigma_{\rm per}(T)$
consists of poles of the resolvent. In particular, every
non-quasi\-nilpotent Riesz operator is peripherally Riesz.
Applying the results of the first part of this section, we obtain the
following two possible structures of the asymptotic part of
$\Rplus T$.

\begin{proposition}\label{dych-proj-nilp}
  Suppose that $T$ is peripherally Riesz with $r(T)=1$. Let
  $X=X_1\oplus X_2$, where $X_1$ and $X_2$ are the spectral subspaces
  for $\sigma_{\rm per}(T)$ and its complement, respectively. Let $P$
  be the spectral projection onto $X_1$. Then exactly one of the
  following holds.
  \begin{enumerate}
  \item\label{dpn-unim} \emph{(``Unimodular'' case)} $T_{|X_1}$ is
    unimodular, and each operator in the asymptotic part of $\Rplus T$
    is of form $cU\oplus 0$, where $c\ge 0$ and $U$ is unimodular. Some
    sequence $(T^{m_j})$ of powers of $T$ converges to $P$, $P$ is the
    only non-zero projection in $\Rplus T$, and $\Rplus T$ contains no
    non-zero quasi-nilpotent operators.
  \item\label{dpn-nilp} \emph{(``Nilpotent'' case)} The asymptotic
    part of $\Rplus T$ is non-trivial. For each operator $S$ 
    with $S=\lim_jb_jT^{n_j}$ with $(n_j)$ strictly increasing, we have 
    $S=B\oplus 0$ where $B\in L(X_1)$ is nilpotent (even square-zero)
    and $b_j\to 0$. Also, $\Rplus T$ contains no projections.
  \end{enumerate}
\end{proposition}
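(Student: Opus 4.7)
The plan is to reduce everything to the matrix statements Propositions~\ref{dych-id-nilp} and~\ref{matrix-nilp-sg}, applied to the restriction of $T$ to the finite-dimensional subspace $X_1$, while controlling the complement $X_2$ via the spectral gap. Since $T$ is peripherally Riesz with $r(T)=1$, both $X_1$ and $X_2$ are $T$-invariant, $\sigma(T_{|X_1})=\sigma_{\rm per}(T)$, and $r(T_{|X_2})<1$, so $\norm{T^n_{|X_2}}\le C\rho^n$ for some $\rho<1$ and $C>0$. Writing $A:=T_{|X_1}$, Proposition~\ref{dych-id-nilp} gives the dichotomy: either $A$ is unimodular, or the nilpotent part $N$ of its Jordan decomposition is non-zero; I will identify these with cases (i) and (ii) of the statement.

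In the unimodular case, the argument in~\ref{matrix-dych} supplies $(m_j)$ with $A^{m_j}\to I_{|X_1}$, and combining with the geometric decay on $X_2$ gives $T^{m_j}\to P$. For a general asymptotic limit $S=\lim_j b_jT^{n_j}$, the unimodular matrices $A^{n_j}$ lie in a compact group, hence $\norm{A^{n_j}}$ is bounded above and below by positive constants; consequently $(b_j)$ is bounded and $\norm{b_j T^{n_j}_{|X_2}}\le b_j C\rho^{n_j}\to 0$. Passing to a subsequence, $A^{n_j}\to V$ unimodular and $b_j\to c\ge 0$, so $S=cV\oplus 0$. Since $r(cV)=c$, no non-zero quasi-nilpotent can occur, and a non-zero projection forces $c=1$ and $V=I_{|X_1}$, i.e., $S=P$. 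The same spectral argument applied to $(cT^n)_{|X_1}=cA^n$ shows that if $cT^n\in\Rplus T$ is a non-zero projection then $c=1$ and $A^n=I_{|X_1}$, whence $T^n_{|X_2}$ is a projection of spectral radius $<1$, hence zero, so $cT^n=P$.

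Now suppose $N\ne 0$. For any asymptotic $S=\lim_j b_jT^{n_j}$, the sequence $b_jA^{n_j}$ on $X_1$ satisfies the hypotheses of Proposition~\ref{matrix-nilp-sg}, giving $S_{|X_1}=B$ with $B^2=0$ and $b_j\to 0$; hence $\norm{b_jT^{n_j}_{|X_2}}\le b_jC\rho^{n_j}\to 0$ and $S=B\oplus 0$. Non-triviality of the asymptotic part is witnessed by the sequence $c_jT^{r_j}$ constructed in~\ref{matrix-dych}, whose $X_1$-part converges to $N^k\ne 0$ and whose $X_2$-part vanishes since both $c_j$ and $\rho^{r_j}$ tend to zero. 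For the absence of projections: an asymptotic $B\oplus 0$ with $B^2=0$ is idempotent only when $B=0$; and since $N\ne 0$, $A$ has a Jordan block of size $\ge 2$, so $A^n$ is not diagonalizable, hence $cA^n$ is never a non-zero projection, excluding also projections of the form $cT^n$.

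The principal technical concern is ensuring the $X_2$-part vanishes in every asymptotic limit. This reduces to controlling how fast $b_j$ can grow against $\rho^{n_j}$: in the unimodular case $\norm{A^{n_j}}$ is bounded below so $(b_j)$ is bounded, while in the nilpotent case $\binom{n_j}{k}^{-1}\norm{A^{n_j}}$ is bounded below (by the proof of Proposition~\ref{matrix-nilp-sg}), forcing at most polynomial growth of $b_j$; either way, the geometric factor $\rho^{n_j}$ dominates. Once this observation is in place, the rest is assembling the matrix-level dichotomy with the block-diagonal structure of $T$.
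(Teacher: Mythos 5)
Your proposal is correct and follows essentially the same route as the paper: apply Propositions~\ref{dych-id-nilp} and~\ref{matrix-nilp-sg} to $T_{|X_1}$ and kill the $X_2$-component using $r(T_{|X_2})<1$ together with boundedness (resp.\ decay) of the scalars $(b_j)$. The only step argued differently is ruling out projections of the form $cT^n$ in the nilpotent case, where your appeal to non-diagonalizability of $A^n$ is valid because $A=T_{|X_1}$ is invertible (the paper instead notes that $T^n$ idempotent would make the set of powers of $T$ finite, contradicting $c_jT^{r_j}\to C\oplus 0\ne 0$); both work.
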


\begin{proof}
  Let $T_1=T_{|X_1}$ and $T_2=T_{|X_2}$.

  \eqref{dpn-unim} Suppose that $T_1$ is unimodular. Take $S\in\Rplus
  T$. As $X_1$ and $X_2$ are invariant under $S$, we can write
  $S=S_1\oplus S_2$. Suppose that $S=\lim_jb_jT^{n_j}$ for some
  $(b_j)$ in $\mathbb R_+$ and some strictly increasing sequence
  $(n_j)$ in $\mathbb N$. Then $b_jT_1^{n_j}\to S_1$. It follows that
  $S_1$ is a scalar multiple of a unimodular matrix and $(b_j)$ is
  bounded. It now follows from $r(T_2)<1$ that
  $S_2=\lim_jb_jT_2^{n_j}=0$. So $S$ is of form $cU\oplus
  0$. Furthermore, for every non-zero $S\in\Rplus T$, the restriction
  $S_{|X_1}$ is a positive scalar multiple of a unimodular matrix, so
  that $S$ is not quasinilpotent.

  By Proposition~\ref{dych-id-nilp}, $T_1^{m_j}$ converges to the
  identity of $X_1$. Since $r(T_2)<1$ we have $T_2^{m_j}\to
  0$. Therefore, $T^{m_j}\to P$. Finally, let's show that $P$ is the
  only non-zero projection in $\Rplus T$. Suppose $Q\in\Rplus T$ is a
  projection. Suppose first that $Q=cT^n$ for some $c>0$ and
  $n\in\mathbb N$. Then 
  \begin{math}
    \frac{1}{c^{m_j}}Q=\bigl(\frac{1}{c}Q)^{m_j}=T^{nm_j}\to P^n=P;
  \end{math}
  it follows that $c=1$ and $Q=P$. Suppose now that $Q$ is in the
  asymptotic part of $\Rplus T$.  Then $Q=Q_1\oplus 0$ where $Q_1$ is
  unimodular and is a projection in $L(X_1)$; hence $Q$ is the
  identity on $X_1$ and, therefore, $Q=P$.

  \eqref{dpn-nilp} Suppose now that $T_1$ is not unimodular, hence it
  has a non-trivial nilpotent part. By Proposition~\ref{dych-id-nilp},
  there exist sequences $(c_j)$ in $\mathbb R_+$ and $(r_j)$ in
  $\mathbb N$ such that $c_j\to 0$, $(r_j)$ is strictly increasing,
  and $(c_jT_1^{r_j})$ converges to a non-zero square-zero operator
  $C$ on $X_1$. It follows from $c_j\to 0$ and $r(T_2)<1$ that
  $c_jT_2^{r_j}\to 0$. Therefore, $c_jT^{r_j}\to C\oplus 0$, hence
  $C\oplus 0$ is in the asymptotic part of $\Rplus T$.

  Suppose that $S=\lim_jb_jT^{n_j}$ for some $(b_j)$ in $\mathbb R_+$
  and some strictly increasing
  $(n_j)$. Proposition~\ref{matrix-nilp-sg} applied with $A=T_1$
  guarantees that $b_j\to 0$ and $S_{|X_1}$ is a square-zero
  operator. Furthermore, $r(T_2)<1$ implies
  $S_{|X_2}=\lim_jb_jT_2^{n_j}=0$. In particular, $S$ cannot be a
  projection.

  It is left to show that if $Q=cT^n$ for some $c>0$ and $n\in\mathbb
  N$ then $Q$ is not a projection. Suppose it is. It follows from
  $r(Q)=1=r(T^n)$ that $c=1$, so $Q=T^n$. Hence, the set of all
  distinct powers of $T$ is finite. It follows from $c_j\to 0$ that
  $c_jT^{r_j}\to 0$, but this contradicts $c_jT^{r_j}\to C\oplus 0\ne
  0$.
\end{proof}

\begin{remark}\label{dych-no-nilp}
  Suppose that, in addition, $\rank T=\minrank\Rplus T<\infty$. Then
  the nilpotent case in Proposition~\ref{dych-proj-nilp} is
  impossible. Indeed, otherwise $\Rplus T$ would contain an operator
  of the form $C\oplus 0$ where $C$ is a nilpotent operator in
  $L(X_1)$, hence $$0<\rank C\oplus 0=\rank C<\dim X_1\le\rank T$$
  since $T$ is an isomorphism on $X_1$; a contradiction. Thus, we have
  $P\in \Rplus T$, where $P$ is the spectral projection for $X_1$. It
  follows that $\rank T=\rank P=\dim X_1$, so that
  $T_{|X_2}=0$. Hence, $\Range T=X_1$, $\ker T=X_2$, and $\sigma(T)$
  consists of $\sigma_{\rm per}(T)$ and, possibly, zero.
\end{remark}

\section{$\Rplus$-closed semigroups on Banach spaces}

Throughout this section, we assume that $\iS$ is an $\Rplus$-closed
semigroup of operators on a Banach space $X$. The following result
follows immediately from Proposition~\ref{dych-proj-nilp}.

\begin{proposition}\label{pR-finrank}
  If $\iS$ contains a peripherally Riesz operator then $\iS$ contains
  a finite-rank operator.
\end{proposition}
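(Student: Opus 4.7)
The plan is to reduce to Proposition~\ref{dych-proj-nilp} applied to the peripherally Riesz operator itself. Let $T\in\iS$ be peripherally Riesz. Since $r(T)>0$ and $\iS$ is $\Rplus$-closed, the rescaled operator $T':=\frac{1}{r(T)}T$ lies in $\iS$ and satisfies $r(T')=1$, so $\Rplus T'\subseteq\iS$.

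Now I would apply Proposition~\ref{dych-proj-nilp} to $T'$ and split into its two cases. In the unimodular case, the spectral projection $P$ onto the peripheral spectral subspace $X_1$ belongs to $\Rplus T'$, and $P$ has finite rank because $\dim X_1<\infty$ by the definition of peripherally Riesz. In the nilpotent case, the asymptotic part of $\Rplus T'$ is non-empty and every operator $S$ in it has the form $B\oplus 0$ with $B\in L(X_1)$; since $\dim X_1<\infty$, such an $S$ automatically has finite rank (and is non-zero, as guaranteed by the proposition). Either way, $\Rplus T'\subseteq\iS$ contains a non-zero finite-rank operator.

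There is essentially no obstacle here beyond unpacking the dichotomy already established in Proposition~\ref{dych-proj-nilp}; the statement is really a direct corollary once one observes that finite-dimensionality of $X_1$ forces any operator of the form $B\oplus 0$ to have finite rank.
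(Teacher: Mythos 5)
Your proof is correct and is exactly the paper's argument: the paper states that the proposition ``follows immediately from Proposition~\ref{dych-proj-nilp},'' and your write-up simply unpacks that dichotomy (spectral projection of finite rank in the unimodular case, a non-zero operator of the form $B\oplus 0$ with $B\in L(X_1)$ in the nilpotent case). No gaps.
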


In particular, this proposition applies when $\iS$ contains a
non-quasi\-nilpotent compact or even strictly singular operator.

Can we find not just a finite-rank operator in $\iS$ but a finite-rank
projection? As in Remark~\ref{dych-no-nilp}, if there is a $T\in\iS$
such that $\rank T=\minrank\iS<+\infty$ and $T$ is not nilpotent then
the spectral projection $P$ for $\sigma_{\rm per}(T)$ is in $\iS$ and
$\rank P=\rank T$. The next lemma shows that in this case $\iS$
contains ``sufficiently many'' projections.

\begin{lemma}\label{rank-r-proj}
  Suppose that $S\in\iS$ such that $r:=\rank S=\minrank\iS<\infty$ and
  $S$ is not nilpotent. Then there exist projections $P$ and $Q$ in
  $\iS$ with $\rank P=\rank Q=r$ and $PS=SQ=S$. Moreover, the
  condition ``$S$ is not nilpotent'' may be replaced with ``$AS$ is
  not nilpotent for some $A\in\mathcal S$''.
\end{lemma}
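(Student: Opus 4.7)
My plan is to obtain both projections from Remark~\ref{dych-no-nilp}, applied to $S$ itself in the main statement and to the products $SA$ and $AS$ in the ``moreover'' refinement.

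In the main case, assume $S$ is not nilpotent. Being finite rank, $S$ is compact, and non-nilpotence forces $r(S)>0$; hence $S$ is peripherally Riesz. I want to invoke Remark~\ref{dych-no-nilp} with $(1/r(S))S$ in place of $T$, which requires $\rank S=\minrank\Rplus S$. The lower bound $\minrank\Rplus S\ge r$ comes from $\Rplus S\subseteq\iS$; the upper bound uses that every member of $\Rplus S$ is either $bS^n$ or a norm limit of such, together with the standard fact that ``rank $\le r$'' is preserved under norm limits (a Hahn--Banach argument applied to any candidate list of $r+1$ linearly independent image vectors). The remark then places the peripheral spectral projection $P$ inside $\Rplus S\subseteq\iS$ with $\rank P=r$, $\Range P=\Range S$, and $\ker P=\ker S$. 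These two equalities yield $PS=S$ (image containment) and $SP=S$ (kernel containment), so the choice $Q=P$ settles the main statement.

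For the ``moreover'' refinement, assume only that $AS$ is not nilpotent for some $A\in\iS$. Because $AS$ and $SA$ share their nonzero spectrum, $SA$ is also not nilpotent; and both $AS,SA\in\iS$ factor through $S$, so each has rank at most $r$ and, by minimality of $\minrank\iS$, exactly $r$. The main case therefore applies to each. Applied to $SA$ it yields a rank-$r$ projection $P\in\iS$ with $P(SA)=SA$, hence $\Range SA\subseteq\Range P$; combined with $\Range SA\subseteq\Range S$ and $\dim\Range SA=r=\dim\Range S$ this forces $\Range P=\Range S$, giving $PS=S$. Applied to $AS$ it yields a rank-$r$ projection $Q\in\iS$ with $(AS)Q=AS$, hence $\ker Q\subseteq\ker(AS)$. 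Since $\ker S\subseteq\ker(AS)$ and all three kernels have codimension $r$ in $X$, one gets $\ker Q=\ker(AS)=\ker S$, and therefore $SQ=S$.

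The one step most worth guarding against is the verification that $\rank S=\minrank\Rplus S$, since Remark~\ref{dych-no-nilp} relies on this condition; once it is secured the rest is a straightforward dimension-counting exercise on the ranges and kernels of the relevant products.
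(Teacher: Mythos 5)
Your proof is correct. The first half — obtaining $P$ as the peripheral spectral projection of $SA$ via Remark~\ref{dych-no-nilp} (after checking $\rank SA=\minrank\Rplus(SA)=r$ and $\Range SA=\Range S$) — is essentially the paper's argument. Where you diverge is in producing $Q$: the paper passes to the adjoint semigroup $\iS^*=\{T^*\mid T\in\iS\}$, observes that $\iS^*$, $S^*$, $A^*$ satisfy the same hypotheses, and applies the first half there to get a projection $R=Q^*$ with $RS^*=S^*$, i.e.\ $SQ=S$. You instead stay inside $\iS$, take $Q$ to be the peripheral spectral projection of $AS$, and deduce $\ker Q=\ker(AS)=\ker S$ by the codimension count, whence $SQ=S$. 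Both routes are sound. Your version has the advantage of not requiring the (true but not entirely free) verification that $\iS^*$ is again a norm-closed $\Rplus$-closed semigroup with $\minrank\iS^*=r$ — this rests on the adjoint map being an isometry and on $\rank T^*=\rank T$ for finite-rank $T$ — while the paper's dualization is shorter on the page once those facts are granted. Your observation that in the case $A=I$ the single projection $P$ already satisfies $SP=S$ (because $\ker P=\ker S$ by Remark~\ref{dych-no-nilp}) is a nice simplification that the paper does not exploit.
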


\begin{proof}
  Suppose $AS$ is not nilpotent for some $A\in\mathcal S$ or $A=I$. Then
  $r(SA)=r(AS)\ne 0$. Clearly, $\rank AS=\rank SA=r$. It
  follows from $\Range SA\subseteq\Range S$ and $\rank SA=\rank S$
  that $\Range SA=\Range S$. By the preceding remark with $T=SA$, the
  peripheral spectral projection $P$ of $SA$ is in $\iS$, $\rank P=r$,
  and $\Range P=\Range SA=\Range S$, hence $PS=S$.

  In order to find $Q$, we pass to the adjoint semigroup
  $\iS^*=\{T^*\mid T\in\iS\}$. Note that $\iS^*$, $S^*$, and $A^*$
  still satisfy all the assumptions of the lemma, so we can find a
  projection $R\in\iS^*$ such that $\rank R=r$ and $RS^*=S^*$. Then
  $R=Q^*$ for some projection $Q\in\iS$ with $\rank Q=r$ and $SQ=S$.
\end{proof}

\begin{lemma}\label{matrix-group}
  Suppose that $\iS$ is an $\Rplus$-closed semigroup of matrices such
  that every non-zero matrix in $\iS$ is invertible. Then
  $\bigl\{A\in\iS\mid r(A)=1\bigr\}$ is a closed group.
\end{lemma}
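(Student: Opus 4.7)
The plan is to identify $G := \{A \in \iS : r(A) = 1\}$ as the ``unimodular part'' of $\iS$ using Proposition~\ref{dych-id-nilp}, and then to verify the group axioms with two tools: the power-convergence $A^{m_j} \to I$ supplied by the unimodular dichotomy, and the determinant, which will pin down the scalar in a product.

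The first step is to show that every $A \in G$ is unimodular on all of $X$ and satisfies $A^{m_j} \to I$ for some strictly increasing $(m_j)$. Decompose $X = X_1 \oplus X_2$ into the peripheral and non-peripheral spectral subspaces for $A$, so $A = A_1 \oplus A_2$ with $\sigma(A_1) = \sigma_{\rm per}(A)$ on the unit circle and $r(A_2) < 1$. Apply Proposition~\ref{dych-id-nilp} to $A_1$. In the nilpotent alternative, one gets $c_j A^{r_j} \to C \oplus 0$ for some nonzero nilpotent $C$ on $X_1$; this limit is a nonzero, non-invertible element of $\iS$, contradicting the hypothesis. In the unimodular alternative, $A_1^{m_j} \to I_{X_1}$, hence $A^{m_j} \to I_{X_1} \oplus 0 =: P$ in $\iS$; if $X_2 \ne 0$ then $P$ is again a nonzero non-invertible element of $\iS$, a contradiction. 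Thus $X_2 = 0$, $A$ is unimodular on $X$, and $A^{m_j} \to I$ in $\iS$.

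Three of the four required properties of $G$ then follow easily. The identity lies in $G$, since $I = \lim_j A^{m_j} \in \iS$ for any $A \in G$. For inverses, the identity $A^{m_j - 1} = A^{-1} A^{m_j} \to A^{-1}$ (valid once $m_j \ge 1$) places $A^{-1}$ in $\iS$; unimodularity of $A$ gives unimodularity of $A^{-1}$, so $A^{-1} \in G$. For topological closedness: the spectral radius is continuous on matrices, $\iS$ is closed, and the bound $\|A_k\| \ge r(A_k) = 1$ keeps a sequence in $G$ from collapsing to $0$, so any norm limit is a nonzero element of $\iS$ with $r = 1$.

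The step I expect to be the main obstacle is closure under products, and this is where the determinant is essential. For $A, B \in G$, the semigroup law gives $AB \in \iS$; invertibility of $A$ and $B$ forces $AB \ne 0$, hence $r(AB) > 0$, so $AB / r(AB) \in \iS$ lies in $G$ and is therefore unimodular. Unimodularity of $A$, $B$, and $AB/r(AB)$ yields $|\det A| = |\det B| = |\det(AB/r(AB))| = 1$; unraveling the last equality gives $|\det(AB)| = r(AB)^n$, while multiplicativity of $\det$ gives $|\det(AB)| = |\det A||\det B| = 1$. Comparing forces $r(AB)^n = 1$, so $r(AB) = 1$ and $AB \in G$, completing the proof that $G$ is a closed group.
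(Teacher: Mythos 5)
Your proposal is correct and follows essentially the same route as the paper: the unimodular/nilpotent dichotomy rules out the nilpotent case and forces the peripheral spectral projection (which lies in $\iS$, hence is invertible) to equal $I$, giving $A^{m_j}\to I$, unimodularity, and inverses, while the identity $\abs{\det A}=r(A)^n$ for unimodular multiples handles closure under products and closedness. The only cosmetic difference is that you rederive the relevant part of Proposition~\ref{dych-proj-nilp} by hand from Proposition~\ref{dych-id-nilp} instead of citing it, and you use continuity of the spectral radius rather than of the determinant for topological closedness.
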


\begin{proof}
  Let $\iS_1:=\bigl\{A\in\iS\mid r(A)=1\bigr\}$. Take any
  $A\in\iS_1$. Since $\iS$ contains no non-zero nilpotent matrices,
  the nilpotent case in Proposition~\ref{dych-proj-nilp} is impossible,
  hence some sequence of powers $A^{m_j}$ converges to the peripheral
  spectral projection $P$ of $A$. In particular, $P\in\iS$, hence
  invertible, so that $P=I$ and $\sigma(A)$ is contained in the unit
  circle. This yields that $A$ is unimodular. It follows from
  $A^{m_j-1}=A^{-1}A^{m_j}\to A^{-1}$ that $A^{-1}\in\iS$. Clearly,
  $\sigma(A^{-1})$ is also contained in the unit circle, so
  that $A^{-1}\in\iS_1$.

  Suppose that $0\ne A\in\iS$. Then $\frac{1}{r(A)}A\in\iS_1$, and the
  later matrix is unimodular, so that $\abs{\det A}=r(A)^n$.  It
  follows that for $A\in\iS$ we have $A\in\iS_1$ iff $\abs{\det A}=1$.
  Therefore, $\iS_1$ is closed under multiplication. It also follows
  that $\iS_1$ is closed.
\end{proof}

\section{Ideal irreducible semigroups containing finite-rank operators.}
\label{sec:fin-rank} 

Throughout this section, $\iS$ is a semigroup of positive operators on
a Banach lattice $X$. For $x\in X$, the \term{orbit} of $x$ under
$\iS$ is defined as $\iS x=\{Sx\mid S\in\iS\}$. We will use the
following known fact; cf. Lemma~8.7.6 in~\cite{Radjavi:00} and
Proposition~2.1 in~\cite{Drnovsek:09}.

\begin{proposition}\label{irr-chars}
  The following are equivalent:
  \begin{enumerate}
  \item\label{irr-self} $\iS$ is ideal irreducible;
  \item\label{irr-ideals} every non-zero algebraic ideal in $\iS$ is
  ideal irreducible;
  \item\label{irr-loc} for any non-zero $x\in X_+$ and $x^*\in X^*_+$
    there exists $S\in\iS$ such that $\langle x^*,Sx\rangle\ne 0$;
  \item\label{irr-ops} $A\iS B\ne\{0\}$ for any non-zero $A,B\in L(X)_+$.
  \item\label{irr-single} for any $x>0$, the ideal generated in $X$ by
    the orbit $\iS x$ is dense in $X$.
  \end{enumerate}
\end{proposition}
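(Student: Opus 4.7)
My plan is to establish the cyclic chain $(v)\Rightarrow(i)\Rightarrow(iii)\Rightarrow(v)$ and then verify $(iii)\Leftrightarrow(iv)$ and $(i)\Leftrightarrow(ii)$ separately.  The direction $(v)\Rightarrow(i)$ is a one-liner: given a nonzero closed $\iS$-invariant ideal $J$, pick $x\in J_+\setminus\{0\}$; then $\iS x\subseteq J$ by invariance, so the ideal generated by $\iS x$ lies in $J$, and $(v)$ forces this ideal to be dense, whence $J=X$.  The direction $(iii)\Rightarrow(v)$ goes by contrapositive: if the closure $J$ of the ideal generated by $\iS x$ is a proper subset of $X$, then the quotient Banach lattice $X/J$ and Hahn--Banach supply a nonzero $x^*\in X^*_+$ vanishing on $J$, and in particular on every $Sx$, contradicting $(iii)$.

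The core of the argument, and the main obstacle, is $(i)\Rightarrow(iii)$.  My plan is: given positive nonzero $x$ and $x^*$, attach to $x^*$ the set
\[
K_{x^*}:=\bigl\{y\in X\mid \langle x^*,T|y|\rangle=0\text{ for every }T\in\iS\bigr\}.
\]
Solidity of $K_{x^*}$ is automatic from positivity of $x^*$ and $T$; $\iS$-invariance follows from $|Sy|\le S|y|$ together with $TS\in\iS$; closedness is continuity.  Under $(i)$, $K_{x^*}$ equals either $\{0\}$ or $X$.  The scenario $K_{x^*}=X$ forces $T^*x^*=0$ for every $T\in\iS$; but then the null ideal $N(x^*):=\{y\mid \langle x^*,|y|\rangle=0\}$ is itself a closed proper (as $x^*\ne 0$) $\iS$-invariant ideal --- invariance using $\langle x^*,|Sy|\rangle\le\langle S^*x^*,|y|\rangle=0$ --- and it contains the range of every $T\in\iS$.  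Irreducibility then forces $N(x^*)=\{0\}$, making every $T\in\iS$ zero, which contradicts $(i)$ on a non-trivial Banach lattice.  Hence $K_{x^*}=\{0\}$, and since $x\ne 0$ we have $x\notin K_{x^*}$, giving the required $T\in\iS$ with $\langle x^*,Tx\rangle\ne 0$.

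For $(iii)\Leftrightarrow(iv)$ I pass through rank-one positive operators.  Given positive nonzero $x$ and $x^*$, fix auxiliary $e\in X_+\setminus\{0\}$ and $e^*\in X^*_+\setminus\{0\}$ and form the nonzero positive operators $Az=\langle x^*,z\rangle e$ and $Bz=\langle e^*,z\rangle x$; then $ASB\ne 0$ collapses to $\langle x^*,Sx\rangle\ne 0$, so $(iv)\Rightarrow(iii)$.  Conversely, given positive nonzero $A,B\in L(X)$, pick $y_0\in X_+$ with $By_0\ne 0$ and $y^*\in X^*_+$ with $A^*y^*\ne 0$, and apply $(iii)$ with $x:=By_0$ and $x^*:=A^*y^*$ to obtain $S\in\iS$ with $\langle A^*y^*,SBy_0\rangle\ne 0$, i.e.\ $ASB\ne 0$.

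Finally, $(ii)\Rightarrow(i)$ is trivial, since $\iS$ is a nonzero ideal in itself.  For $(i)\Rightarrow(ii)$, I apply the already-established equivalence $(i)\Leftrightarrow(iv)$ to $J$ in place of $\iS$.  Given positive nonzero $A,B\in L(X)$ and any nonzero $T_0\in J$, two successive uses of $(iv)$ for $\iS$ --- first with the pair $(A,T_0)$ producing $S_1\in\iS$ with $AS_1T_0\ne 0$, then with $(AS_1T_0,B)$ producing $S_2\in\iS$ with $AS_1T_0S_2B\ne 0$ --- yield an element $S_1T_0S_2\in J$ (by two-sidedness) with $A\cdot(S_1T_0S_2)\cdot B\ne 0$.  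Thus $J$ satisfies $(iv)$, hence $(i)$.
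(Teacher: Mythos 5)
Your proof is correct, but it is genuinely more self-contained than the paper's: the paper disposes of the equivalence of (i)--(iv) entirely by citing Proposition~2.1 of \cite{Drnovsek:09} and only adds the easy loop (i)$\Rightarrow$(v)$\Rightarrow$(iii), whereas you prove everything from scratch and close the loop in the opposite direction, (v)$\Rightarrow$(i)$\Rightarrow$(iii)$\Rightarrow$(v). Your key construction for (i)$\Rightarrow$(iii) --- the closed $\iS$-invariant ideal $K_{x^*}$ of all $y$ with $\langle x^*,T\abs{y}\rangle=0$ for every $T\in\iS$, with the null ideal $N(x^*)$ used to rule out the case $K_{x^*}=X$ --- is the standard argument (essentially what the cited reference does), and the remaining steps check out: the passage (iii)$\Leftrightarrow$(iv) via rank-one positive operators, and the bootstrapping of (iv) down to a two-sided algebraic ideal $\iJ$ by producing $AS_1T_0S_2B\ne 0$ with $S_1T_0S_2\in\iJ$. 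The one caveat, which you yourself flag, is the degenerate situation $\iS=\{0\}$ on a one-dimensional lattice, where (i) holds vacuously but (iii) fails; your argument needs $X$ to admit a nonzero proper closed ideal (automatic once $\dim X\ge 2$, since any Archimedean lattice of dimension at least two contains two disjoint nonzero positive elements) so that ``every $T\in\iS$ is zero'' genuinely contradicts irreducibility. This is the same tacit convention the paper inherits from its citation, so it is not a gap. What your version buys is a complete in-paper proof; what the paper's buys is brevity.
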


\begin{proof}
  The equivalence of \eqref{irr-self} through \eqref{irr-ops} is
  Proposition~2.1 in~\cite{Drnovsek:09}. It is easy to see that
  \eqref{irr-self}$\Rightarrow$\eqref{irr-single}$\Rightarrow$\eqref{irr-loc}.
\end{proof}

\begin{remark}\label{S_r}
  Suppose that $r:=\minrank\iS<+\infty$; let $\iS_r$ be the set of all
  operators of rank $r$ in $\iS$ and zero. Then $\iS_r$ is an ideal,
  so that $\iS$ is ideal irreducible iff $\iS_r$ is ideal
  irreducible. Also, since the set of all operators of rank $r$ is
  closed in $L(X)$, if $\iS$ is $\Rplus$-closed then so is $\iS_r$.
\end{remark}

The following fact was proved in~\cite{Drnovsek:01}, see
also~\cite[Corollary~10.47]{Abramovich:02}. 

\begin{theorem}[\cite{Drnovsek:01}]\label{Tur-pos}
  If $\iS$ consists of compact quasinilpotent operators then $\iS$ is
  ideal reducible.
\end{theorem}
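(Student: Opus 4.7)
The plan is to argue by contradiction: assume $\iS$ is ideal irreducible, and derive a contradiction from the hypothesis that every operator in $\iS$ is compact and quasinilpotent. The principal external input is Turovskii's classical theorem, which asserts that any semigroup of compact quasinilpotent operators on a Banach space admits a common non-trivial invariant closed linear subspace; equivalently, by the Shulman--Turovskii formula, the joint spectral radius of any finite subfamily of $\iS$ vanishes. The task is to upgrade ``invariant subspace'' to ``invariant closed ideal,'' exploiting the positivity of operators in $\iS$.

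Concretely, I would fix a non-zero $x\in X_+$ and, in view of Proposition~\ref{irr-chars}\eqref{irr-loc}, seek a non-zero positive functional $x^*\in X^*_+$ with $\langle x^*,Sx\rangle=0$ for every $S\in\iS$, which would directly contradict ideal irreducibility. To build such an $x^*$, first pick a strictly positive $e^*\in X^*_+$ (after, say, replacing $X$ by the closed principal ideal generated by $x$, itself a Banach lattice on which $\iS$ continues to act) and consider the positive functionals
\[
  y_F^* := (S_n\cdots S_1)^* e^*
\]
indexed by finite words $F=(S_1,\dots,S_n)$ over $\iS$. The Shulman--Turovskii vanishing of joint spectral radii, applied to each finite subfamily $\mathcal F\subset\iS$, yields words $F$ of unbounded length whose operator norms decay super-exponentially; combined with positivity, a suitably renormalized weak$^*$-cluster point of the $y_F^*$ should be non-zero, positive, and annihilate the entire orbit $\iS x$.

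The main obstacle is the renormalization step: selecting scalars $c_F>0$ so that $c_Fy_F^*$ admits a non-zero weak$^*$-cluster point that nonetheless vanishes on every $Sx$ with $S\in\iS$. This is a delicate diagonal argument, using the monoid structure of $\iS$ and the lattice structure of $X$ together to ensure that the limiting functional lies in the positive cone and remains compatible with left-multiplication by $\iS$. Drnov\v sek's proof in~\cite{Drnovsek:01} carries out this construction carefully; granting it, the desired contradiction with Proposition~\ref{irr-chars}\eqref{irr-loc} follows at once, and the theorem is established.
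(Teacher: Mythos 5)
The paper offers no proof of this statement at all: it is quoted verbatim from Drnov\v sek~\cite{Drnovsek:01} (see also \cite[Corollary~10.47]{Abramovich:02}), so there is nothing internal to compare your argument against. Judged on its own terms, your sketch correctly identifies the essential external input --- Turovskii's theorem, in the form that the joint (Rota--Strang) spectral radius of any finite subset of a semigroup of compact quasinilpotent operators vanishes --- and correctly frames the problem as upgrading ``invariant subspace'' to ``invariant closed ideal'' via positivity. But the argument you then propose is not a proof: the entire content lies in the renormalization step that you explicitly label as the ``main obstacle'' and outsource back to~\cite{Drnovsek:01}. Concretely, if $c_F=\norm{y_F^*}^{-1}$, a weak$^*$ cluster point of $(c_Fy_F^*)$ is positive but may well be zero, and to make it annihilate $\iS x$ you would need $\norm{S_n\cdots S_1Sx}$ to decay strictly faster than $\norm{(S_n\cdots S_1)^*e^*}$ \emph{uniformly} over the extra factor $S$ ranging through the whole (possibly infinite) semigroup --- something the finite-subfamily joint spectral radius estimates do not provide. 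There is also a smaller unjustified step: the closed principal ideal $\overline{I_x}$ is a Banach lattice with quasi-interior point $x$, but such a lattice need not admit a strictly positive functional (e.g.\ $C(\beta\mathbb N\setminus\mathbb N)$), so your choice of $e^*$ is not available in general.

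For the record, Drnov\v sek's actual route avoids dual functionals entirely and is worth internalizing. From the vanishing of the joint spectral radius of $\{S_1,\dots,S_n\}\subseteq\iS$ one gets, using positivity ($\norm{(\sum_i\lambda_iS_i)^m}\le(\sum_i\lambda_i)^m\max_w\norm{S_w}$ over words $w$ of length $m$), that every finite positive linear combination $R=\sum_i\lambda_iS_i$ is still quasinilpotent; hence the additive--multiplicative closure of $\Rplus\iS$ still consists of compact quasinilpotent positive operators and has the same invariant ideals. Now pick $T\in\iS$ and $x>0$ with $Tx>0$ and consider the closed ideal $J$ generated by the orbit of $Tx$ under this enlarged semigroup; $J$ is non-zero and invariant, and if it were all of $X$ one would obtain $x\le Rx+u$ with $R$ quasinilpotent positive and $u$ arbitrarily small, which after iteration contradicts $r(R)=0$ via the elementary fact that $0<x\le Rx$ forces $r(R)\ge1$. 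This replaces your delicate diagonal/cluster-point construction with two short positivity arguments; I would recommend either reproducing that argument or simply citing the result, as the paper does, rather than re-deriving it along the dual lines you propose.
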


\begin{theorem}\label{irr-exist-proj}
  If $\iS$ is ideal irreducible, $\Rplus$-closed, and contains a
  peripherally Riesz operator then $\minrank\iS<+\infty$ and $\iS$
  contains a projection $P$ with $\rank P=\minrank\iS$.
\end{theorem}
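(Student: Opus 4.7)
The plan is to combine Proposition~\ref{pR-finrank}, Remark~\ref{S_r}, Drnov\v sek's Theorem~\ref{Tur-pos}, and Lemma~\ref{rank-r-proj} in a direct four-step reduction. First, since $\iS$ is $\Rplus$-closed and contains a peripherally Riesz operator, Proposition~\ref{pR-finrank} produces a non-zero finite-rank operator in $\iS$, so $r:=\minrank\iS<+\infty$. Let $\iS_r$ be the set consisting of zero together with all rank-$r$ operators in $\iS$. By Remark~\ref{S_r}, $\iS_r$ is an $\Rplus$-closed ideal in $\iS$ and is itself ideal irreducible.

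Next I would show that $\iS_r$ must contain a non-nilpotent operator. Every element of $\iS_r$ is a finite-rank positive operator, hence compact; and for finite-rank operators quasinilpotency coincides with nilpotency (writing $Sx=\sum_{i=1}^{r} f_i(x)y_i$ for a basis $y_1,\dots,y_r$ of $\Range S$, the non-zero spectrum of $S$ equals the non-zero spectrum of the $r\times r$ scalar matrix $\bigl(f_i(y_j)\bigr)$, so $\sigma(S)=\{0\}$ forces $S^{r+1}=0$). If every operator in $\iS_r$ were quasinilpotent, Theorem~\ref{Tur-pos} would force $\iS_r$ to be ideal reducible, contradicting ideal irreducibility. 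Thus some $S\in\iS_r$ is not nilpotent.

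Finally, $S$ satisfies the hypotheses of Lemma~\ref{rank-r-proj} with $A=I$: it lies in $\iS$, has rank equal to $\minrank\iS$, and is not nilpotent. The lemma then delivers a projection $P\in\iS$ with $\rank P=r=\minrank\iS$, which is exactly what is claimed.

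Given the machinery already developed, there is no real obstacle; the only point needing a moment's care is the observation that for finite-rank operators quasinilpotent equals nilpotent, which is what lets Drnov\v sek's theorem (phrased for quasinilpotent semigroups) feed into Lemma~\ref{rank-r-proj} (phrased for non-nilpotent generators).
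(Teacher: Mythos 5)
Your proposal is correct and follows essentially the same route as the paper: Proposition~\ref{pR-finrank} to get $\minrank\iS<\infty$, Remark~\ref{S_r} to see that $\iS_r$ is an ideal irreducible ideal of compact (finite-rank) operators, Theorem~\ref{Tur-pos} to extract a non-quasinilpotent (hence non-nilpotent) element, and Lemma~\ref{rank-r-proj} to produce the rank-$r$ projection. Your aside on quasinilpotent versus nilpotent for finite-rank operators is harmless but not needed, since the direction you actually use (non-quasinilpotent implies non-nilpotent) is immediate.
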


\begin{proof}
  By Proposition~\ref{pR-finrank}, $r:=\minrank\iS$ is finite. By
  Remark~\ref{S_r}, $\iS_r$ is ideal irreducible and, therefore,
  Theorem~\ref{Tur-pos} guarantees that $\iS_r$ contains a
  non-(quasi)\-nilpotent operator. Now apply Lemma~\ref{rank-r-proj}.
\end{proof}

\begin{example}
  The following example shows that, in general, for a peripherally
  Riesz operator $T\in\iS$, the peripheral spectral projection of $T$
  need not be in $\iS$. Let
  \begin{math}
    A=\bigl[
    \begin{smallmatrix}
      1 & 1 \\ 0 & 1
    \end{smallmatrix}
    \bigr]
  \end{math}
  and
  \begin{math}
    B=\bigl[
    \begin{smallmatrix}
      0 & 0 \\ 1 & 0
    \end{smallmatrix}
    \bigr],
  \end{math}
  and let $\iS=\Rplus\{A,B\}$. Clearly, $\iS$ is irreducible and the
  peripheral spectral projection of $A$ is the identity. We claim that
  $I\notin\iS$. Indeed, $\iS$ consists of all positive scalar
  multiples of products of $A$ and $B$ and their limits. Any product
  that involves $B$ has rank one or zero; since the set of matrices of
  rank one or zero is closed, any limit of products involving $B$ is
  also of rank one or zero. On the other hand, it follows from
  \begin{math}
    A^n=\bigl[
    \begin{smallmatrix}
      1 & n \\ 0 & 1
    \end{smallmatrix}
    \bigr]
  \end{math}
  that if $S=\lim b_jA^{n_j}$ then $S$ is a scalar multiple of
  \begin{math}
    \bigl[
    \begin{smallmatrix}
      0 & 1 \\ 0 & 0
    \end{smallmatrix}
    \bigr].
  \end{math}
  Therefore, the only elements of $\iS$ of rank two are the scalar
  multiples of powers of $A$. Hence $I\notin\iS$.
\end{example}

\begin{corollary}\label{comp-min-rank}
  If $\iS$ is ideal irreducible, $\Rplus$-closed, and contains a non-zero
  compact operator then $\minrank\iS<+\infty$ and $\iS$ contains a
  projection $P$ with $\rank P=\minrank\iS$.
\end{corollary}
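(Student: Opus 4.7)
The plan is to reduce the statement to Theorem~\ref{irr-exist-proj} by producing a peripherally Riesz operator inside $\iS$. Recall that, as noted in Section~\ref{sec:pR}, every non-quasinilpotent compact operator is peripherally Riesz. So the entire content of the corollary is to upgrade the hypothesis ``$\iS$ contains a non-zero compact operator'' to the stronger statement ``$\iS$ contains a non-quasinilpotent compact operator''.

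Let $K\in\iS$ be the given non-zero compact operator, and let $\iJ$ be the algebraic ideal generated by $K$ in $\iS$, i.e., the set of all operators of the form $AKB$ with $A,B\in\iS\cup\{I\}$. Every element of $\iJ$ is positive and compact, since compactness is preserved by composition with bounded operators and $\iS$ consists of positive operators. Moreover $\iJ$ is non-zero because $K\in\iJ$. By Proposition~\ref{irr-chars}\eqref{irr-ideals}, the ideal $\iJ$ is itself ideal irreducible.

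Now invoke Theorem~\ref{Tur-pos}: if every operator in $\iJ$ were quasinilpotent, then $\iJ$ would be ideal reducible, contradicting the previous paragraph. Hence there exists $T\in\iJ$ that is compact, positive, and not quasinilpotent. Such a $T$ is peripherally Riesz, and $T\in\iJ\subseteq\iS$, so Theorem~\ref{irr-exist-proj} applies directly and yields $\minrank\iS<+\infty$ together with a projection $P\in\iS$ satisfying $\rank P=\minrank\iS$.

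I do not anticipate a serious obstacle: the argument is a two-step reduction, with all the heavy lifting carried out by Theorem~\ref{irr-exist-proj} (handling the peripherally Riesz case) and Theorem~\ref{Tur-pos} (Drnov\v sek's positive analogue of Turovskii's theorem, ruling out a purely quasinilpotent compact ideal in an ideal irreducible semigroup). The only minor point is to confirm that the algebraic ideal $\iJ$ really satisfies the hypotheses of Theorem~\ref{Tur-pos}, but this is immediate from its construction. Note that we do not need $\iJ$ itself to be $\Rplus$-closed; we only use $\iS$'s $\Rplus$-closedness when applying Theorem~\ref{irr-exist-proj} to $\iS$ at the final step.
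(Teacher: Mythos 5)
Your proof is correct and follows essentially the same route as the paper: reduce to Theorem~\ref{irr-exist-proj} by using Proposition~\ref{irr-chars}\eqref{irr-ideals} and Theorem~\ref{Tur-pos} to extract a non-quasinilpotent (hence peripherally Riesz) compact operator from an ideal of compact operators in $\iS$. The only (immaterial) difference is that you work with the principal ideal generated by $K$, whereas the paper uses the ideal of all compact operators in $\iS$.
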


\begin{proof}
  By Theorem~\ref{irr-exist-proj}, it suffices to show that $\iS$
  contains a non-quasi\-nilpotent compact operator. The set of all
  compact operators in $\iS$ is an ideal, hence is ideal irreducible by
  Proposition~\ref{irr-chars}\eqref{irr-ideals}. Then it contains a
  non-quiasinilpotent operator by Theorem~\ref{Tur-pos}.
\end{proof}

\textbf{Throughout the rest of this section, we assume that $\iS$ is
  an ideal irreducible $\Rplus$-closed semigroup with
  $r:=\minrank\iS<+\infty$.}  We denote by $\iS_r$ for the ideal of
all operators of rank $r$ in $\iS$ or zero; we will write $\iP_r$ for
the (non-empty) set of all projections of rank $r$ in $\iS$.

\begin{lemma}\label{rAS1}
  For every non-zero $S\in\iS_r$ there exists $A\in\iS$ such that $AS$
  is not nilpotent.
\end{lemma}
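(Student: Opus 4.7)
My plan is to argue by contradiction using Drnov\v{s}ek's theorem (Theorem~\ref{Tur-pos}): suppose $AS$ is nilpotent for every $A \in \iS$, and exhibit the principal ideal of $\iS$ generated by $S$ as an ideal irreducible semigroup of compact quasinilpotent operators, which Theorem~\ref{Tur-pos} forbids.

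First I would form $\iJ := \{ASB \mid A, B \in \iS \cup \{I\}\}$, the algebraic ideal of $\iS$ generated by $S$ as described in the paragraph just before Lemma~\ref{Kron}. Every element of $\iJ$ has rank at most $r$, hence is compact. Being an algebraic ideal inside the semigroup $\iS$, $\iJ$ is automatically a subsemigroup, since for $T_1, T_2 \in \iJ \subseteq \iS$ one has $T_1 T_2 \in \iJ \cdot \iS \subseteq \iJ$. Moreover, Proposition~\ref{irr-chars}\eqref{irr-ideals} gives that $\iJ$ inherits ideal irreducibility from $\iS$.

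Next I would verify that, under the standing assumption, every $T \in \iJ$ is quasinilpotent. Writing $T = A' S B'$ with $A', B' \in \iS \cup \{I\}$, cyclic invariance $r(XY) = r(YX)$ gives $r(T) = r(B' A' S)$. A brief case-split on whether $A'$ and $B'$ lie in $\iS$ or equal $I$ reduces $r(T)$ in each case to $r(CS)$ for some $C \in \iS$, which is zero by hypothesis: if $A', B' \in \iS$, take $C := B' A'$; if $A' = I$ and $B' \in \iS$, rewrite $r(SB') = r(B'S)$ and take $C := B'$; if $A' \in \iS$ and $B' = I$, then $T = A'S$ is itself such; and if $A' = B' = I$, then $T = S$ and applying the hypothesis with $A := S \in \iS$ gives $r(S)^2 = r(S^2) = r(AS) = 0$. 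Hence $\iJ$ consists of compact quasinilpotent operators, contradicting Theorem~\ref{Tur-pos}.

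I do not foresee any genuine obstacle. The one substantive observation is that the hypothesis on products $AS$ propagates to the entire principal ideal $\iJ$ via cyclic invariance of the spectral radius (with the case $A' = B' = I$ handled internally by using $A := S$); after that, Theorem~\ref{Tur-pos} does the heavy lifting.
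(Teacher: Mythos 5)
Your proof is correct and is essentially the contrapositive of the paper's own argument: the paper takes $\iJ=\iS S\iS$, invokes Proposition~\ref{irr-chars} and Theorem~\ref{Tur-pos} directly to produce a non-quasinilpotent $A_1SA_2$, and then uses $r(A_1SA_2)=r(A_2A_1S)$, exactly the cyclic-invariance step you use. The only cosmetic differences are that you adjoin the identity (forcing the small case split, including the $A:=S$ trick for $T=S$) and phrase the argument as a contradiction rather than extracting the witness directly.
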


\begin{proof}
  Let $\iJ=\iS S\iS$.  Then $\iJ$ consists of operators of finite
  rank, hence compact. $\iJ$ is non-zero by
  Proposition~\ref{irr-chars}\eqref{irr-ops} and ideal irreducible by
  Proposition~\ref{irr-chars}\eqref{irr-ideals}.  Hence, by
  Theorem~\ref{Tur-pos}, $\iJ$ contains a non-quasi\-nilpotent
  operator. That is, there exist $A_1,A_2\in\iS$ such that $0\ne
  r(A_1SA_2)=r(A_2A_1S)=r(AS)$ where $A=A_2A_1$.
\end{proof}

Combining this lemma with Lemma~\ref{rank-r-proj}, we show that
$\iS$ contains ``sufficiently many'' rank $r$ projections
(cf. Lemmas~5.2.2 and~8.7.17 in~\cite{Radjavi:00}).

\begin{theorem}\label{r-exists-proj}
  For every $S\in\iS_r$ there exist $P,Q\in\iP_r$ such
  that $PS=SQ=S$.
\end{theorem}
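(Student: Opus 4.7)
The plan is to deduce this theorem essentially as a direct combination of the two lemmas immediately preceding it, namely Lemma~\ref{rank-r-proj} (``$AS$ not nilpotent implies existence of rank-$r$ projections $P,Q$ sandwiching $S$'') and Lemma~\ref{rAS1} (``for every non-zero $S\in\iS_r$ there exists $A\in\iS$ with $AS$ not nilpotent''). The role of this theorem is just to package these two facts into a single convenient statement about $\iS_r$.

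First I would dispose of the trivial case $S=0$. Since we are in the standing setting of this section, $\iS$ is an ideal irreducible $\Rplus$-closed semigroup with $r=\minrank\iS<\infty$, so by Theorem~\ref{irr-exist-proj} the set $\iP_r$ is non-empty; any choice of $P=Q\in\iP_r$ then satisfies $PS=SQ=0=S$.

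For the main case, suppose $S\in\iS_r$ is non-zero. By Lemma~\ref{rAS1}, there exists $A\in\iS$ such that $AS$ is not nilpotent. Now we are in the precise hypothesis of Lemma~\ref{rank-r-proj}: we have $S\in\iS$ with $\rank S=r=\minrank\iS<\infty$, and $AS$ is not nilpotent for some $A\in\iS$. The ``moreover'' clause of Lemma~\ref{rank-r-proj} then yields projections $P,Q\in\iS$ of rank $r$ with $PS=SQ=S$. By definition of $\iP_r$, both $P$ and $Q$ lie in $\iP_r$, which completes the proof.

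There is no real obstacle here; the substantive work was already done in Lemma~\ref{rAS1} (which invoked Drnov\v sek's Theorem~\ref{Tur-pos} applied to the ideal $\iS S\iS$ of compact operators to produce a non-quasinilpotent element, yielding the desired $A$) and in Lemma~\ref{rank-r-proj} (which relied on Remark~\ref{dych-no-nilp} to extract the peripheral spectral projection and then passed to the adjoint to obtain $Q$). The only thing to verify when writing out the proof is that Lemma~\ref{rank-r-proj} genuinely produces projections inside $\iS$ (which it does) and that these lie in $\iP_r$ (which is immediate from $\rank P=\rank Q=r$).
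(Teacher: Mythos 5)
Your proof is correct and matches the paper's intended argument exactly: the theorem is stated immediately after Lemma~\ref{rAS1} precisely as the combination of that lemma with the ``moreover'' clause of Lemma~\ref{rank-r-proj}, which is what you do. Your explicit treatment of the trivial case $S=0$ (using that $\iP_r$ is non-empty) is a harmless extra care the paper leaves implicit.
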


\begin{corollary}\label{faith}
  For every non-zero $x\in X_+$ and $x^*\in X_+^*$ there exist
  $P,Q\in\iP_r$ such that $Qx\ne 0$ and $P^*x^*\ne 0$.
\end{corollary}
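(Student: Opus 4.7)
The plan is to combine Proposition~\ref{irr-chars} applied to the ideal $\iS_r$ with Theorem~\ref{r-exists-proj}. By Remark~\ref{S_r}, $\iS_r$ is itself an ideal irreducible semigroup (it is a non-zero algebraic ideal of the ideal irreducible $\iS$, so Proposition~\ref{irr-chars}\eqref{irr-ideals} applies). I will feed the given pair $x,x^*$ into characterization~\eqref{irr-loc} of Proposition~\ref{irr-chars} for this smaller semigroup, producing a rank-$r$ operator that does not annihilate $x$ and whose adjoint does not annihilate $x^*$. Then Theorem~\ref{r-exists-proj} attaches rank-$r$ projections to this operator from the left and from the right, and I can transfer the non-vanishing from the operator to the projections.

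More concretely: given non-zero $x\in X_+$ and $x^*\in X_+^*$, Proposition~\ref{irr-chars}\eqref{irr-loc} applied to $\iS_r$ yields $S\in\iS_r$ with $\langle x^*,Sx\rangle\ne 0$; in particular $Sx\ne 0$ and $S^*x^*\ne 0$, and $S$ is automatically non-zero, hence of rank exactly $r$. Invoking Theorem~\ref{r-exists-proj} on this $S$ produces projections $P,Q\in\iP_r$ with $PS=S$ and $SQ=S$. From $SQ=S$ we read off $SQx=Sx\ne 0$, so $Qx\ne 0$. Dually, $PS=S$ gives $S^*P^*=S^*$, hence $S^*P^*x^*=S^*x^*\ne 0$, which forces $P^*x^*\ne 0$.

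There is essentially no obstacle here: the only thing to be careful about is that the existence of $S$ with $\langle x^*,Sx\rangle\ne 0$ must be obtained inside $\iS_r$ rather than $\iS$ (so that Theorem~\ref{r-exists-proj} is applicable), which is exactly what Remark~\ref{S_r} plus Proposition~\ref{irr-chars}\eqref{irr-ideals} guarantee. The rest is a two-line manipulation using adjoints.
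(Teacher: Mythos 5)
Your proposal is correct and is essentially identical to the paper's own proof: apply Proposition~\ref{irr-chars}\eqref{irr-loc} to the ideal irreducible semigroup $\iS_r$ to get $S\in\iS_r$ with $x^*(Sx)\ne 0$, then take $P,Q$ from Theorem~\ref{r-exists-proj}. You merely spell out the final two-line adjoint manipulation that the paper leaves implicit.
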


\begin{proof}
  Since $\iS_r$ is ideal irreducible, by
  Proposition~\ref{irr-chars}\eqref{irr-loc} there exists $S\in\iS_r$
  such that $x^*(Sx)\ne 0$. Now take $P$ and $Q$ as in
  Theorem~\ref{r-exists-proj}.
\end{proof}

Now, as we know that $\iS$ contains ``sufficiently many'' positive
projections of finite rank, we will need to understand the structure
of the range of such a projection. The following observation is
based on Proposition~11.5 on p.~214 of~\cite{Schaefer:74}.

\begin{numbered}\label{fin-rank-proj}
 \emph{Structure of a positive projection.}  
 Let $P$ be a positive projection on $X$; let $Y=\Range P$. It is
  easy to see that $Y$ is a lattice subspace of $X$ with lattice
  operations $x\overset{*}\wedge y=P(x\wedge y)$ and $x\overset{*}\vee
  y=P(x\vee y)$ for any $x,y\in Y$. We will denote this vector lattice
  by $X_P$. Note that this lattice structure is determined by
  $Y$, so that if $Q$ is another positive projection on $X$ with
  $\Range Q=Y$ then it generates the same lattice structure on $Y$.

  Suppose, in addition, that $n:=\rank P<\infty$. Being a
  finite-dimensional Archimedean vector lattice, $X_P$ is lattice
  isomorphic to $\mathbb R^n$ with the standard order, see, e.g.,
  \cite[Corollary~1, p.~70]{Schaefer:74}. Thus, we can find positive
  *-disjoint $x_1,\dots, x_n\in X_P$ that form a basis of
  $X_P$. Furthermore, we can find positive $y^*_1\dots,y^*_n\in X_P^*$
  such that $y^*_i(x_j)=\delta_{ij}$. Put $x^*_i=y^*_i\circ P$, then
  $x^*_1\dots,x^*_n\in X^*_+$ and $x^*_i(x_j)=\delta_{ij}$. It is easy
  to see that $P=\sum_{i=1}^n x^*_i\otimes x_i$.
\end{numbered}

  Consider $\iS_P=\{PSP_{|X_P}\mid S\in\iS\}$, so that $\iS_P\subseteq
  L_+(X_P)$ (note that $P$ need not be in $\iS$). The following
  proposition extends Lemmas~5.2.1 and~8.7.16 in~\cite{Radjavi:00}.

\begin{proposition}\label{S_P}
  If $P$ is a positive finite-rank projection and $P\iS P\subseteq\iS$
  then $\iS_P$ is an irreducible $\Rplus$-closed semigroup in $L_+(X_P)$.
\end{proposition}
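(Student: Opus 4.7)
The plan is to verify three properties of $\iS_P$: that it is a semigroup of positive operators on $X_P$, that it is $\Rplus$-closed in $L(X_P)$, and that it is ideal irreducible. Positivity is immediate from \ref{fin-rank-proj}: the positive cone of $X_P$ coincides with $X_+\cap\Range P$, and $PSP$ sends $X_+$ into that cone.

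The semigroup property is where the hypothesis $P\iS P\subseteq\iS$ comes in. Given $A=PSP_{|X_P}$ and $B=PTP_{|X_P}$ in $\iS_P$, both $PSP$ and $PTP$ lie in $\iS$ by hypothesis, so $U:=(PSP)(PTP)=PSPTP$ also lies in $\iS$, and $P^2=P$ yields $PUP_{|X_P}=AB$. For $\Rplus$-closedness, closure under positive scalars is automatic. For norm closure, suppose $A_n=PS_n P_{|X_P}\to A$ in $L(X_P)$. The key observation is that, regarded as operators on all of $X$, $PS_nP = A_n\circ P$; hence $PS_nP\to A\circ P$ in $L(X)$. Since $\iS$ is norm closed, $A\circ P\in\iS$, and a direct computation using $P^2=P$ gives $P(A\circ P)P_{|X_P}=A$, so $A\in\iS_P$.

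The substantive step is irreducibility, and here the structure provided by \ref{fin-rank-proj} is decisive. Choose a $*$-disjoint positive basis $x_1,\dots,x_n$ of $X_P$ with biorthogonal positive functionals $x_1^*,\dots,x_n^*\in X_+^*$ satisfying $x_k^*(x_i)=\delta_{ki}$ and $P=\sum_k x_k^*\otimes x_k$. Since $X_P$ is lattice isomorphic to $\mathbb R^n$, every proper non-zero closed ideal of $X_P$ has the form $J_I=\Span\{x_i\::\:i\in I\}$ for some proper non-empty $I\subseteq\{1,\dots,n\}$. Suppose, for contradiction, that such a $J_I$ is $\iS_P$-invariant, and fix $i\in I$. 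For every $S\in\iS$,
\begin{equation*}
PSPx_i=PSx_i=\sum_k x_k^*(Sx_i)\,x_k\in J_I,
\end{equation*}
which forces $x_k^*(Sx_i)=0$ for every $k\notin I$. Since $x_i$ and any such $x_k^*$ are non-zero positive elements of $X$ and $X^*$ respectively, this contradicts ideal irreducibility of $\iS$ via Proposition~\ref{irr-chars}\eqref{irr-loc}.

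The main technical obstacle is bookkeeping in the $\Rplus$-closure step, where one must keep straight the difference between operators on $X$ and operators on $X_P$ and verify that $A\circ P$ is actually positive on $X$. The conceptual content is concentrated in the irreducibility argument, where the disjoint basis from \ref{fin-rank-proj} reduces everything to the local criterion of Proposition~\ref{irr-chars}.
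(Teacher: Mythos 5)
Your proof is correct and follows essentially the same route as the paper: the semigroup property comes from $P\iS P\subseteq\iS$, norm closedness is obtained by pulling the limit back to $L(X)$ via $A\circ P=PAP\in\iS$, and irreducibility reduces to Proposition~\ref{irr-chars}(iii) applied to the pairs $(x_i,x_k^*)$ coming from the representation $P=\sum_k x_k^*\otimes x_k$. The only cosmetic difference is that you argue contrapositively through the explicit description of closed ideals of $X_P\cong\mathbb R^n$, whereas the paper shows directly that every matrix entry position is hit by some $PSP_{|X_P}$ and then cites the local criterion for $\iS_P$; the substance is identical.
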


\begin{proof}
  It follows from $P\iS P\subseteq\iS$ that $\iS_P$ is a
  semigroup. Let $P=\sum_{i=1}^n x^*_i\otimes x_i$ as before; relative
  to the basis $x_1$,\dots,$x_n$, we can view $\iS_P$ as a semigroup
  of positive $n\times n$ matrices.  Since $\iS$ is ideal irreducible, by
  Proposition~\ref{irr-chars}\eqref{irr-loc}, for each $i,j$ there
  exists $S\in\iS$ such that $x^*_i(Sx_j)\ne 0$, i.e., the $(ij)$-th
  entry of the matrix of $PSP_{|X_P}$ is non-zero. Hence, $\iS_P$ is
  irreducible by Proposition~\ref{irr-chars}\eqref{irr-loc}.

  To show that $\iS_P$ is closed, suppose that $PS_nP_{|X_P}\to A$ for
  some sequence $(S_n)$ in $\iS$ and some $A\in L(X_P)$. Put $S=PAP\in
  L(X)$.  Then $PS_nP\to S$, so that $S\in\iS$ because $\iS$ is
  closed.  Now $A=PSP_{|X_P}$ yields $A\in\iS_P$.
\end{proof}

Of course, the assumption that $P\iS P\subseteq\iS$ is satisfied when
$P\in\iS$. If, in addition, $\rank P=r$, we get the following much
stronger result. We write $\iG_P:=\bigl\{PSP_{|X_P}\mid S\in\iS\mbox{
    and } r(PSP)=1\bigr\}$.

\begin{proposition}\label{S_P-perm}
  Suppose that $P\in\iP_r$. Then every non-zero element of $\iS_P$ is
  invertible and, after appropriately scaling the basis vectors of
  $X_P$, $\iG_P$ is a transitive%
  \footnote{Transitive in the sense that for each $i$ and $j$ there
    exists $A\in\iG_P$ such that $Ax_i=x_j$.}  group of permutation
  matrices.
\end{proposition}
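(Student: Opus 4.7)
The plan is as follows. Since $P\in\iS$, we have $P\iS P\subseteq\iS$, so Proposition~\ref{S_P} shows that $\iS_P$ is an $\Rplus$-closed irreducible semigroup in $L_+(X_P)$. For any non-zero $S\in\iS$ the operator $PSP$ lies in $\iS$ with $\rank(PSP)\le r$, so by minimality of $r$ it is either zero or of rank $r$; thus every non-zero element of $\iS_P$ is invertible on the $r$-dimensional lattice $X_P$. Lemma~\ref{matrix-group} now gives that $\iG_P$ is a closed group whose elements are unimodular; in particular, for each $A\in\iG_P$ the inverse $A^{-1}$ again lies in $\iG_P\subseteq L_+(X_P)$, so both $A$ and $A^{-1}$ have non-negative entries in the basis $x_1,\dots,x_r$ of $X_P$.

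I would next invoke the standard fact that a non-negative matrix with a non-negative inverse is monomial: reading $AA^{-1}=I$ entry-by-entry together with invertibility of $A^{-1}$ forces each row (and each column) of $A$ to contain exactly one positive entry. Hence there are positive scalars $d^A_i$ and a permutation $\pi_A\in S_r$ with $Ax_j=d^A_{\pi_A(j)}\,x_{\pi_A(j)}$ for every $A\in\iG_P$ and every $j$, and the assignment $A\mapsto\pi_A$ is a group homomorphism $\iG_P\to S_r$. To see that its image is transitive, fix $i,j$ and apply Proposition~\ref{irr-chars}\eqref{irr-loc} to the irreducible semigroup $\iS_P$ with the positive functionals $x^*_1,\dots,x^*_r$ from~\ref{fin-rank-proj}; this produces some $B\in\iS_P$ with non-zero $(i,j)$-entry, and rescaling to have $r(B)=1$ yields a monomial matrix in $\iG_P$ with $\pi_B(j)=i$.

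To carry out the basis scaling, observe that $I_{X_P}=P_{|X_P}\in\iG_P$. For each $j$ pick $B_j\in\iG_P$ with $\pi_{B_j}(1)=j$ (taking $B_1=I$), let $c_j>0$ be the scalar determined by $B_jx_1=c_jx_j$, and set $\tilde x_j:=c_jx_j$. The crucial claim is that each $A\in\iG_P$ then permutes $\tilde x_1,\dots,\tilde x_r$ without any extra factor. Fixing such $A$ and $j$, and letting $i:=\pi_A(j)$, the element $C:=B_i^{-1}AB_j$ again lies in the group $\iG_P$, and a short computation using the defining equations of $B_j$ and $B_i$ gives $Cx_1=(c_jd^A_i/c_i)\,x_1$. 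Thus $x_1$ is an eigenvector of $C\in\iG_P$ with positive real eigenvalue $c_jd^A_i/c_i$; since $\sigma(C)$ lies on the unit circle, this eigenvalue must equal~$1$. This yields $d^A_i=c_i/c_j$, and therefore $A\tilde x_j=c_jAx_j=c_jd^A_i\,x_i=c_i\,x_i=\tilde x_{\pi_A(j)}$, as required.

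The hard part is really the last step: verifying that a single choice of scaling $(c_j)$ is compatible with every $A\in\iG_P$ simultaneously. This is essentially a cocycle-triviality condition, and the key trick is to encode it as the eigenvalue computation for the conjugate element $C=B_i^{-1}AB_j\in\iG_P$, where the unimodularity delivered by Lemma~\ref{matrix-group} forces the ambiguous positive eigenvalue $c_jd^A_i/c_i$ to equal~$1$.
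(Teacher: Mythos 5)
Your proposal is correct and follows essentially the same route as the paper: Proposition~\ref{S_P} plus the minimality of $r$ give invertibility, Lemma~\ref{matrix-group} gives the group structure and hence positivity of inverses, the monomial-matrix fact identifies the elements as weighted permutations, and the scaling is fixed by the same conjugation trick $C=B_i^{-1}AB_j$, whose positive eigenvalue at $x_1$ is forced to equal $1$ (the paper deduces this from $\lambda\le r(C)=1$ and $\lambda^{-1}\le r(C^{-1})=1$ rather than from full unimodularity, but this is the same computation). The only cosmetic difference is that you track the scalars $c_j$ explicitly instead of renaming the basis vectors first.
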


\begin{proof}
  By Proposition~\ref{S_P}, $\iS_P$ is irreducible and
  $\Rplus$-closed.  Since $r=\minrank\iS$, every non-zero element of
  $\iS_P$ is invertible. It follows from Lemma~\ref{matrix-group} that
  $\iG_P$ is a group. In particular, each matrix in $\iG_P$ has a
  positive inverse. It is known that a positive matrix $A$
  in $M_r(\mathbb R)$ has a positive inverse iff it is a weighted
  permutation matrix with positive weights, i.e., there exist positive
  weights $w_1,\dots,w_r$ and a permutation $\sigma$ of
  $\{1,\dots,r\}$ such that $Ax_i=w_ix_{\sigma(i)}$ for each
  $i=1,\dots,r$.

  It is left to show that, after scaling $x_i$'s, we may assume that
  all the weights are equal to one (for all $S\in\iG_P$). We
  essentially follow the proof of Lemma~5.1.11 in~\cite{Radjavi:00}.
  Since $\iS_P$ is an irreducible semigroup of matrices, for each
  $i,j\le r$ there exists $A\in\iS_P$ such that $Ax_i$ is a scalar
  multiple of $x_j$. Put $A_1=I$. For each $2=1,\dots,r$ fix
  $A_i\in\iG_P$ such that $A_ix_1=\mu_ix_i$ for some
  $\mu_i>0$. Replacing $x_i$ with $\mu_ix_i$ for $i=2,\dots,r$, we
  have $A_ix_1=x_i$. It suffices to show that with respect to these
  modified $x_i$'s, all the matrices in $\iG_P$ are permutation
  matrices. Let $B\in\iG_P$. We know that $B$ is a weighted
  permutation matrix. Take any $i$ and $j$ such that $\lambda:=b_{ij}$
  is non-zero. Put $C=A_i^{-1}BA_j$. Then $C\in\iG_P$ and
  $Cx_1=\lambda x_1$, so that $\lambda=c_{11}\le r(C)=1$. Similarly,
  $\lambda^{-1}$ is the $(1,1)$'s entry of $C^{-1}$, hence
  $\lambda^{-1}\le 1$ as well, so that $\lambda=1$.

  Finally, transitivity of $\iG_P$ follows from the irreducibility of $\iS_P$.
\end{proof}

\begin{remark}\label{x0}
  It follows that the vector $x_0=x_1+\dots+x_r$ is invariant under
  $\iG_P$. Furthermore, for each $S\in\iS$, if $PSP\ne 0$ then the
  minimality of rank implies that $PSP$ is an isomorphism on $X_P$, so
  that $r(PSP)\ne 0$ and, therefore, a scalar multiple of $PSP$ is in
  $\iG_P$. It follows that $x_0$ is a common eigenvector for $\iS_P$
  with $PSPx_0=r(PSP)x_0$.
\end{remark}

\section{Semigroups with all the rank $r$ projections having the same
  range}
\label{sec:sr}

As in the previous section, $\iS$ will stand for an $\Rplus$-closed
ideal irreducible semigroup of positive operators on a Banach lattice, with
$r:=\minrank\iS<\infty$. We will write $\iS_r$ for the (ideal irreducible)
ideal of all operators of rank $r$ in $\iS$ and zero, and $\iP_r$ for
the set of all projections of rank $r$ in $\iS$ (which is non-empty by,
e.g., Corollary~\ref{comp-min-rank}).

Let $P\in\iP_r$ and $x_0$ be as in Remark~\ref{x0}.  For $x_0$ to be a
common eigenvector of the entire semigroup $\iS$ it would suffice that
$\Range P$ is invariant under $S$ and that $PSP\ne 0$ for every
non-zero $S\in\iS$. We will see that, surprisingly, the former implies
the latter. The following proposition extends Lemmas~5.2.4 and~8.7.18
in~\cite{Radjavi:00}.

\begin{proposition}\label{sr}
  The following are equivalent.
  \begin{enumerate}
  \item\label{sr-proj} All projections in $\iP_r$ have the same range;
  \item\label{sr-Sr} All non-zero operators in $\iS_r$ have the same range;
  \item\label{sr-pres} $S(\Range P)=\Range P$ for all non-zero $S\in\iS$ and
    $P\in\iP_r$;
  \item\label{sr-some-inv} The range of some $P\in\iP_r$ is $\iS$-invariant;
  \end{enumerate}
\end{proposition}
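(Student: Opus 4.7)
The plan is to establish the four implications in the cycle \eqref{sr-proj} $\Rightarrow$ \eqref{sr-Sr} $\Rightarrow$ \eqref{sr-pres} $\Rightarrow$ \eqref{sr-some-inv} $\Rightarrow$ \eqref{sr-proj}. The implication \eqref{sr-pres} $\Rightarrow$ \eqref{sr-some-inv} is immediate from the definition of $\iS$-invariance, and \eqref{sr-proj} $\Rightarrow$ \eqref{sr-Sr} follows from Theorem~\ref{r-exists-proj}: given a non-zero $S\in\iS_r$, pick $P\in\iP_r$ with $PS=S$, so $\Range S\subseteq\Range P$, and a dimension count gives $\Range S=\Range P$, which is the common range across $\iP_r$ by \eqref{sr-proj}.

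The two substantive implications \eqref{sr-Sr} $\Rightarrow$ \eqref{sr-pres} and \eqref{sr-some-inv} $\Rightarrow$ \eqref{sr-proj} both reduce to the same key non-vanishing step: if $P\in\iP_r$ with $\Range P$ ``stable'' in the relevant sense, and $S\in\iS$ is non-zero, then $SP\ne 0$. I would prove this by contradiction. If $SP=0$, then for every $T\in\iS$ the operator $TP\in\iS$ has rank at most $r$; when $TP\ne 0$ it belongs to $\iS_r$, and either hypothesis~\eqref{sr-Sr} or the $\iS$-invariance of $\Range P_0$ in \eqref{sr-some-inv} forces $\Range TP\subseteq\Range P$. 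Hence $STP(X)\subseteq S(\Range P)=SP(X)=\{0\}$, so $STP=0$ (trivially also when $TP=0$). This gives $S\iS P=\{0\}$, contradicting Proposition~\ref{irr-chars}\eqref{irr-ops} since $S$ and $P$ are non-zero positive operators.

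With the non-vanishing in hand, \eqref{sr-Sr} $\Rightarrow$ \eqref{sr-pres} is immediate: $SP$ is a non-zero element of $\iS_r$, so $\Range SP=\Range P$ by~\eqref{sr-Sr}, i.e., $S(\Range P)=\Range P$. For \eqref{sr-some-inv} $\Rightarrow$ \eqref{sr-proj}, I would take $P_0\in\iP_r$ with $\Range P_0$ invariant and an arbitrary $Q\in\iP_r$; applying the non-vanishing step with $S=Q$ and $P=P_0$ yields $QP_0\ne 0$, so $QP_0\in\iS_r$. Then $\Range QP_0\subseteq\Range Q$ with matching dimensions forces $\Range QP_0=\Range Q$, while $\Range QP_0=Q(\Range P_0)\subseteq\Range P_0$ with matching dimensions forces $\Range QP_0=\Range P_0$. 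Thus $\Range Q=\Range P_0$.

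The main obstacle is the non-vanishing argument: deriving a contradiction from $S\iS P=\{0\}$ relies crucially on the positive-operator reformulation of ideal irreducibility in Proposition~\ref{irr-chars}\eqref{irr-ops}, rather than the weaker vector/functional version. Once this step is secured, the rest of the cycle is essentially dimension bookkeeping combined with the standing fact that $r$ is the minimal non-zero rank in $\iS$.
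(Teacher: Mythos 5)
Your proof is correct and follows essentially the same route as the paper: the same cycle of implications, with \eqref{sr-proj}$\Rightarrow$\eqref{sr-Sr} via Theorem~\ref{r-exists-proj}, the non-vanishing of $SP$ via Proposition~\ref{irr-chars}\eqref{irr-ops}, and dimension counting for the range equalities. The only difference is cosmetic: you package the non-vanishing step as a single contrapositive lemma (``$SP=0$ forces $S\iS P=\{0\}$''), whereas the paper argues directly by producing a witness $T$ with $ST\ne 0$ and transferring through the common range.
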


\begin{proof}
  \eqref{sr-proj}$\Rightarrow$\eqref{sr-Sr} follows from
  Theorem~\ref{r-exists-proj}.

  \eqref{sr-Sr}$\Rightarrow$\eqref{sr-pres} Let $S\in\iS$ and
  $P\in\iP_r$.  Since $\iS_r$ is ideal irreducible, $S\iS_r\ne\{0\}$, so
  that $ST\ne 0$ for some $T\in\iS_r$. It follows from $\Range
  T=\Range P$ that $SP\ne 0$. Since $SP\in\iS_r$, we have $\Range
  SP=\Range P$.

  \eqref{sr-pres}$\Rightarrow$\eqref{sr-some-inv} is trivial.
  
  \eqref{sr-some-inv}$\Rightarrow$\eqref{sr-proj} Suppose that $\Range
  P$ is $\iS$-invariant for some $P\in\iP_r$. Take any $Q\in\iP_r$. We
  have $Q\iS P\ne\{0\}$ by Proposition~\ref{irr-chars}\eqref{irr-ops},
  so that $QSP\ne 0$ for some $S\in\iS$. By assumption, $SP=PSP$, so
  that $QPSP\ne 0$, hence $QP\ne 0$. This yields $\rank QP=r$. By
  assumption, $\Range QP=Q(\Range P)\subseteq\Range P$, but,
  trivially, $\Range QP\subseteq\Range Q$. Since all the three ranges
  are $r$-dimensional, the inclusions are, in fact, equalities, so
  that $\Range P=\Range QP=\Range Q$.
\end{proof}

Next, we would like to provide a few examples.

\begin{example}\label{ex-2-min-proj}
  Suppose that $x,y\in X_+$ and $x^*,y^*\in X^*_+$ such that
  $x^*(x)=y^*(x)=x^*(y)=y^*(y)=1$. Let $\iS_1=\{x^*\otimes x,
  y^*\otimes x, x^*\otimes y, y^*\otimes y\}$. Then $\iS_1$ is a
  semigroup of projections. Let $\iS=\Rplus\iS_1$, the semigroup of
  all positive scalar multiples of the elements of $\iS_1$. Clearly,
  $\iS_1$ is exactly the set of the minimal rank projections in $\iS$,
  and the ranges of the elements of $\iS$ are $\Span x$ and $\Span
  y$. In particular, all the ranges are the same iff $x=y$.
\end{example}

\begin{example}\label{ex:no-sr}
  More specifically, take in Example~\ref{ex-2-min-proj} $X=\mathbb
  R^2$,
  \begin{math}
    x=\Bigl[
    \begin{smallmatrix}
      \frac{1}{2} \\ \frac{1}{2} 
    \end{smallmatrix}
    \Bigr],
  \end{math}
  \begin{math}
    y=\Bigl[
    \begin{smallmatrix}
      \frac{1}{3} \\ \frac{2}{3} 
    \end{smallmatrix}
    \Bigr],
  \end{math}
  and $x^*=y^*=[1,1]$. Then $\iP_r=\iS_1=\{P,Q\}$ where 
 \begin{math}
    P=\Bigl[
    \begin{smallmatrix}
      \frac{1}{2} &  \frac{1}{2} \\
      \frac{1}{2} &  \frac{1}{2}
    \end{smallmatrix}
    \Bigr]
  \end{math}
  and
  \begin{math}
    Q=\Bigl[
    \begin{smallmatrix}
      \frac{1}{3} &  \frac{1}{3} \\
      \frac{2}{3} &  \frac{2}{3}
    \end{smallmatrix}
    \Bigr]
  \end{math}
  are ideal irreducible and 
  have different ranges.
\end{example}

\begin{example}\label{ex:non-uniq-proj}
  Again in Example~\ref{ex-2-min-proj}, take  $X=\mathbb
  R^2$,
  \begin{math}
    x=y=\bigl[
    \begin{smallmatrix}
      1 \\ 1
    \end{smallmatrix}
    \bigr],
  \end{math}
  $x^*=\bigl[\frac{1}{2},\frac{1}{2}\bigr]$, and
  $y^*=\bigl[\frac{1}{3},\frac{2}{3}\bigr]$.
  Then $\iP_r=\iS_1=\{P,Q\}$ where 
 \begin{math}
    P=\Bigl[
    \begin{smallmatrix}
      \frac{1}{2} &  \frac{1}{2} \\
      \frac{1}{2} &  \frac{1}{2}
    \end{smallmatrix}
    \Bigr]
  \end{math}
  and
  \begin{math}
    Q=\Bigl[
    \begin{smallmatrix}
      \frac{1}{3} &  \frac{2}{3} \\
      \frac{1}{3} &  \frac{2}{3}
    \end{smallmatrix}
    \Bigr]
  \end{math}
  are both irreducible and have the same range.
\end{example}

\begin{example}\label{ex:non-irr-proj}
  Again in Example~\ref{ex-2-min-proj}, take  $X=\mathbb
  R^2$,
  \begin{math}
    x=y=\bigl[
    \begin{smallmatrix}
      1 \\ 1
    \end{smallmatrix}
    \bigr],
  \end{math}
  $x^*=[1,0]$, and
  $y^*=[0,1]$. 
  Then $\iP_r=\iS_1=\{P,Q\}$ where 
 \begin{math}
    P=\bigl[
    \begin{smallmatrix}
      1 & 0 \\
      1 & 0
    \end{smallmatrix}
    \bigr]
  \end{math}
  and
  \begin{math}
    Q=\bigl[
    \begin{smallmatrix}
      0 & 1 \\
      0 & 1
    \end{smallmatrix}
    \bigr].
  \end{math}
   Even though neither $P$ nor $Q$ are irreducible, they generate an
  irreducible semigroup. Note that $P$ and $Q$ have the same range.
\end{example}

\textbf{For the rest of this section, we assume that all the projections
  in $\iP_r$ have the same range}. This condition looks
rather strong at the first glance. However, it will follow immediately
from Proposition~\ref{center} that it is satisfied for commutative
semigroups, and, in particular, for semigroups generated by a single
operator.

We are now going to prove a Banach lattice version of Lemmas~5.2.5
and~8.7.9 as well as Theorems~5.2.6 and~8.7.20
of~\cite{Radjavi:00}. Denote by $Y$ the common range of the
projections in $\iP_r$.  For a non-zero $S\in\iS$ we denote by $S_Y$
the restriction of $S$ to $Y$; we write $\iS_Y=\{S_Y\mid 0\ne
S\in\iS\}$ and $\iG:=\bigl\{S_Y\mid S\in\iS,\ r(S_Y)=1\bigr\}$. Note
that $\iS_Y=\iS_P$ and $\iG=\iG_P$ for every $P\in\iP_r$,
cf. \ref{fin-rank-proj} and Proposition~\ref{S_P-perm}. In particular,
$\iG$ is a transitive group of permutation matrices in the appropriate
positive basis $x_1,\dots,x_r$ of $Y$. The following lemma follows
immediately from Proposition~\ref{sr}\eqref{sr-pres}.

\begin{lemma}\label{main-group}
  For each non-zero $S\in\iS$, the restriction $S_Y$ is an isomorphism
  of $Y$. In particular, $r(S_Y)>0$ and $\frac{1}{r(S_Y)}S_Y\in\iG$.
\end{lemma}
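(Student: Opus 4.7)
The plan is to read off both assertions essentially directly from Proposition~\ref{sr} and the structural results of Section~\ref{sec:fin-rank}. By hypothesis, all projections in $\iP_r$ share a common range $Y$, so condition~\eqref{sr-some-inv} of Proposition~\ref{sr} is satisfied, and hence condition~\eqref{sr-pres} also holds: for every non-zero $S\in\iS$ and every $P\in\iP_r$ we have $S(Y)=SP(X)=\Range P=Y$. Thus $S_Y$ is a surjective linear endomorphism of the $r$-dimensional space $Y$, and a surjective endomorphism of a finite-dimensional vector space is automatically injective, so $S_Y$ is an isomorphism.

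For the second assertion, once $S_Y$ is an isomorphism on the finite-dimensional space $Y$, its spectrum is non-empty and contains no $0$, so $r(S_Y)>0$. To locate $\tfrac{1}{r(S_Y)}S_Y$ in $\iG$, set $T:=\tfrac{1}{r(S_Y)}S$; this is in $\iS$ by $\Rplus$-closedness, and $T_Y=\tfrac{1}{r(S_Y)}S_Y$ has spectral radius $1$. Fixing any $P\in\iP_r$, the identities $Py=y$ for $y\in Y$ and $T(Y)=Y$ give $PTP_{|X_P}=T_Y$, so $r(PTP)=r(T_Y)=1$ and therefore $T_Y\in\iG_P=\iG$ by the very definition of $\iG$.

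No step is expected to present a real obstacle; the only point requiring a line of care is the identification $S_Y=PSP_{|X_P}$, which is what makes the abstract restriction agree with the matrix picture used in Proposition~\ref{S_P-perm} and in the definition of $\iG=\iG_P$. Everything else is immediate from the equivalence established in Proposition~\ref{sr} together with the finite dimensionality of $Y$.
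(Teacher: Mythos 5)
Your proof is correct and is essentially the paper's own argument: the paper derives the lemma directly from Proposition~\ref{sr}\eqref{sr-pres}, which gives $S(Y)=Y$, and the rest (injectivity from surjectivity in finite dimensions, $r(S_Y)>0$, and membership of $\tfrac{1}{r(S_Y)}S_Y$ in $\iG$ via $\Rplus$-closedness) follows exactly as you write.
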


It follows, in particular, that $\iS$ contains no zero divisors and no
non-zero quasi-nilpotent operators.

\begin{theorem}\label{main}
  There exist disjoint positive vectors $x_1,\dots,x_r$ such that
  every $S\in\iS$ acts as a scalar multiple of a permutation on
  $x_i$'s.
\end{theorem}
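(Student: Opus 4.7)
The plan is to take the positive basis $x_1,\dots,x_r$ of $Y:=\Range P$ produced by \ref{fin-rank-proj} for some fixed $P\in\iP_r$, rescaled as in Proposition~\ref{S_P-perm} so that $\iG=\iG_P$ acts by honest permutation matrices. These vectors are positive in $X$ and are $*$-disjoint in the lattice $X_P$, meaning $P(x_i\wedge x_j)=0$ for $i\ne j$. The action on the $x_i$'s is already under control: by Proposition~\ref{sr}\eqref{sr-pres} the subspace $Y$ is $\iS$-invariant, and by Lemma~\ref{main-group} any non-zero $S\in\iS$ satisfies $\tfrac{1}{r(S_Y)}S_Y\in\iG$, so that in $X$ we have $Sx_i=r(S_Y)\,x_{\sigma_S(i)}$ for some permutation $\sigma_S$ of $\{1,\dots,r\}$. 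This gives the ``scalar multiple of a permutation'' conclusion at once; the real content of the theorem is that the $x_i$'s are disjoint in $X$ (not merely $*$-disjoint in $X_P$).

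To prove disjointness, I would introduce, for each pair $i\ne j$, the vector $z_{ij}:=x_i\wedge x_j\in X_+$, and let $J$ be the order ideal generated in $X$ by $\{z_{ij}\mid i\ne j\}$. The first claim is that $J$ is $\iS$-invariant. Indeed, for any non-zero $S\in\iS$, positivity of $S$ together with the identity $Sx_i=r(S_Y)x_{\sigma_S(i)}$ gives
\begin{equation*}
  Sz_{ij}=S(x_i\wedge x_j)\le Sx_i\wedge Sx_j
         =r(S_Y)\bigl(x_{\sigma_S(i)}\wedge x_{\sigma_S(j)}\bigr)
         =r(S_Y)\,z_{\sigma_S(i)\sigma_S(j)},
\end{equation*}
so $Sz_{ij}\in J$; for a general $w\in J$ with $\abs{w}\le\sum c_{ij}z_{ij}$ one gets $\abs{Sw}\le S\abs{w}\in J$. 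Hence the norm closure $\overline J$ is a closed $\iS$-invariant ideal of $X$.

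The main obstacle, and the key step, is to rule out $\overline J=X$. Here I would use $P$ itself. Since $x_i\overset{*}\wedge x_j=P(x_i\wedge x_j)=Pz_{ij}$ and the $x_i$'s were chosen $*$-disjoint, $Pz_{ij}=0$ for every pair $i\ne j$. By positivity of $P$, this forces $Pw=0$ for every $w\in J$, and hence for every $w\in\overline J$. But $P\in\iP_r\subseteq\iS$ is a non-zero operator, so $\overline J\ne X$. Since $\iS$ is ideal irreducible and $\overline J$ is a closed $\iS$-invariant ideal, we conclude $\overline J=\{0\}$, i.e.\ $z_{ij}=0$ for all $i\ne j$, which is the required disjointness. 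Combined with the first paragraph, this completes the proof.
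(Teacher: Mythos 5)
Your argument is correct. The first half (the action of $\iS$ on the $x_i$'s via Lemma~\ref{main-group} and Proposition~\ref{S_P-perm}) is exactly what the paper does, and you correctly identify that the only real content left is upgrading $*$-disjointness in $X_P$ to disjointness in $X$. For that step you take a genuinely different route: the paper simply observes that $Q(x_i\wedge x_j)=0$ for \emph{every} $Q\in\iP_r$ (all such $Q$ have range $Y$ and hence induce the same lattice structure on it) and then invokes Corollary~\ref{faith}, which says that a non-zero positive vector cannot be annihilated by all of $\iP_r$; you instead form the order ideal $J$ generated by the vectors $x_i\wedge x_j$, prove it is $\iS$-invariant using the permutation action, note that the fixed $P$ kills $\overline{J}$ so $\overline{J}\ne X$, and conclude $\overline{J}=\{0\}$ by irreducibility. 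Both arguments ultimately rest on ideal irreducibility; the paper's is shorter because Corollary~\ref{faith} (itself a consequence of Theorem~\ref{r-exists-proj} and Proposition~\ref{irr-chars}) has already packaged the irreducibility, and it needs only that the projections in $\iP_r$ share the range $Y$, not the permutation structure. Your version is more self-contained -- it bypasses Corollary~\ref{faith} entirely and exhibits the obstructing invariant ideal explicitly -- at the cost of using the already-established permutation action to verify invariance of $J$. All the individual steps you use (the inequality $S(x_i\wedge x_j)\le Sx_i\wedge Sx_j$, the vanishing of $P$ on $\overline{J}$, and the passage to the closure) check out.
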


\begin{proof}
  The statement follows immediately from Lemma~\ref{main-group} and
  Proposition~\ref{S_P-perm} except for the disjointness of $x_i$'s.
  By \ref{fin-rank-proj}, we know that $Y$ is a lattice subspace of
  $X$, and the positive vectors $x_1,\dots,x_r$ form a basis of $Y$
  and are disjoint in $Y$.  The latter means that for each $i,j\le r$
  we have $P(x_i\wedge x_j)=0$ for every $P\in\iP_r$. It now follows
  from Corollary~\ref{faith} that $x_i\perp x_j$ in $X$.
\end{proof}

Note that the ideal generated by $Y$ is invariant under $\iS$.

\begin{corollary}\label{ideal-dense}
  The subspace $Y$ is a
  non-zero finite-dimensional sublattice of $X$ invariant under
  $\iS$\footnote{This can be viewed as a Banach lattice version of
    results in \cite{Radjavi:08}.}.
  The ideal generated by $Y$ is dense in $X$ 
\end{corollary}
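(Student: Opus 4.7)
The plan is to derive both assertions directly from Theorem~\ref{main}. For the first, I would use the pairwise disjoint positive basis $x_1,\dots,x_r$ of $Y$ produced by Theorem~\ref{main}. Non-zero-ness and finite-dimensionality of $Y$ are immediate. To upgrade $Y$ from a lattice subspace (cf.~\ref{fin-rank-proj}) to a genuine sublattice of $X$, I would exploit disjointness in $X$: for $y=\sum a_ix_i$ and $z=\sum b_ix_i$ in $Y$, disjointness forces the $X$-lattice operations to take the pointwise form $y\vee z=\sum\max(a_i,b_i)x_i\in Y$, and similarly for $y\wedge z$. Invariance of $Y$ under $\iS$ is then immediate from Theorem~\ref{main}, since every $S\in\iS$ sends each $x_i$ to a non-negative scalar multiple of some $x_j$.

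For the second assertion, I would apply Proposition~\ref{irr-chars}\eqref{irr-single} to the vector $x_0:=x_1+\dots+x_r$. Let $J$ denote the order ideal generated by $Y$ in $X$. Since $0\le x_i\le x_0$ for each $i$, the principal ideal generated by $x_0$ contains every $x_i$, hence contains $Y$; conversely $x_0\in Y$, so $J$ is exactly the principal ideal of $x_0$. On the other hand, Theorem~\ref{main} yields $Sx_0=\sum_i Sx_i=c_Sx_0$ for every $S\in\iS$ with some $c_S\ge 0$, so $\iS x_0\subseteq\mathbb R_+x_0$; and since every $P\in\iP_r$ fixes $x_0$, the orbit contains $x_0$ itself. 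Therefore the ideal generated by $\iS x_0$ coincides with the principal ideal of $x_0$, which equals $J$. Proposition~\ref{irr-chars}\eqref{irr-single} applied to $x_0>0$ now delivers density of this ideal in $X$.

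I do not anticipate any serious obstacle here: the whole argument is bookkeeping on top of Theorem~\ref{main} together with the characterization of ideal irreducibility in Proposition~\ref{irr-chars}\eqref{irr-single}. The only mildly subtle point is recognizing $x_0$ as the ``right'' vector whose $\iS$-orbit stays on the single ray $\mathbb R_+x_0$, which is precisely what makes the density of the ideal generated by the orbit collapse to the desired density of $J$.
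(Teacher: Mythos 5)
Your proposal is correct and follows the paper's (implicit) argument: the paper states this corollary without proof as immediate bookkeeping from Theorem~\ref{main} (disjoint positive basis gives a sublattice, the permutation action gives invariance) together with ideal irreducibility for the density claim. Your detour through Proposition~\ref{irr-chars}\eqref{irr-single} and the eigenvector $x_0$ is equivalent to the more direct observation that the closure of the $\iS$-invariant ideal generated by $Y$ is a non-zero closed invariant ideal, hence all of $X$; both are fine.
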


\begin{corollary}\label{sr-eigen}
  All the operators in $\iS$ have a unique common eigenvector
  $x_0$. Namely, $Sx_0=r(S_Y)x_0$ for each $S\in\iS$. Furthermore,
  $x_0$ is positive and quasi-interior.
\end{corollary}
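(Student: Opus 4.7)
The plan is to take $x_0:=x_1+\dots+x_r$, where $x_1,\dots,x_r$ are the disjoint positive vectors from Theorem~\ref{main}. By Lemma~\ref{main-group} together with Proposition~\ref{S_P-perm}, every non-zero $S\in\iS$ restricts to $S_Y=r(S_Y)\sigma_S$ for some permutation $\sigma_S$ of the basis $\{x_1,\dots,x_r\}$; summing over the basis then yields $Sx_0=r(S_Y)\sum_ix_{\sigma_S(i)}=r(S_Y)x_0$, and the case $S=0$ is trivial. Positivity of $x_0$ is immediate. For quasi-interiority I would invoke Corollary~\ref{ideal-dense}: the ideal generated by $Y=\Span\{x_1,\dots,x_r\}$ is dense in $X$; since the $x_i$ are positive and pairwise disjoint, $x_i\le x_0$, so each $x_i$---and hence $Y$---lies in the principal ideal generated by $x_0$, making that ideal dense and $x_0$ quasi-interior.

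For uniqueness (interpreted in the Perron--Frobenius sense: unique up to positive scalar among positive common eigenvectors), take $0\ne z\in X_+$ with $Sz=\lambda_Sz$ for every $S\in\iS$. Since $P^2=P$ forces $\lambda_P\in\{0,1\}$ for each $P\in\iP_r$, the first step is to show $\lambda_P=1$ for some $P\in\iP_r$, so that $z\in Y$. Suppose to the contrary that $Pz=0$ for every $P\in\iP_r$. For any $S\in\iS_r$, Theorem~\ref{r-exists-proj} furnishes $P\in\iP_r$ with $PS=S$, so $Sz=PSz=\lambda_SPz=0$, giving $\iS_rz=\{0\}$. But $\iS_r$ is ideal irreducible by Remark~\ref{S_r}, so Proposition~\ref{irr-chars}\eqref{irr-loc} applied to $\iS_r$ supplies some $S\in\iS_r$ and $x^*\in X^*_+$ with $x^*(Sz)\ne 0$---a contradiction. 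Thus $z\in Y$.

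Writing $z=\sum_i\alpha_ix_i$ with $\alpha_i\ge 0$, the eigenvalue equation $Sz=\mu_Sz$ for a non-zero $S$ with $S_Y=c_S\sigma_S$, $c_S=r(S_Y)>0$, reads $\mu_S\alpha_j=c_S\alpha_{\sigma_S^{-1}(j)}$ for every $j$. Transitivity of $\iG$ (Proposition~\ref{S_P-perm}) propagates any vanishing $\alpha_j=0$ to all coordinates, contradicting $z\ne 0$, so every $\alpha_i>0$. Multiplying the relations over $j=1,\dots,r$ and using that $\sigma_S^{-1}$ merely rearranges the $\alpha_j$ yields $\mu_S^r=c_S^r$, hence $\mu_S=c_S$; the equation collapses to $\alpha_{\sigma_S(i)}=\alpha_i$ for every $\sigma_S\in\iG$, and transitivity of $\iG$ then forces all $\alpha_i$ to coincide, so $z$ is a positive scalar multiple of $x_0$. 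The delicate point is the elimination of the ``$Pz=0$ for every $P\in\iP_r$'' case: one has to translate the vector-level identity $\iS_rz=\{0\}$ into a contradiction with ideal irreducibility of $\iS_r$ via Proposition~\ref{irr-chars}\eqref{irr-loc}; the rest is bookkeeping on the finite-dimensional side $Y$.
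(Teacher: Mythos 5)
Your proof is correct and follows essentially the same route as the paper: take $x_0=x_1+\dots+x_r$, get the eigenvector property from Theorem~\ref{main}, quasi-interiority from Corollary~\ref{ideal-dense}, and uniqueness by placing the eigenvector in $Y$ and invoking transitivity of $\iG$. The only difference is that you spell out, via ideal irreducibility of $\iS_r$, why a positive common eigenvector cannot be annihilated by every $P\in\iP_r$ --- a point the paper passes over when it simply asserts $y\in\Range P$ --- at the (minor) cost of stating uniqueness only among positive eigenvectors.
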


\begin{proof}
  Let $x_1,\dots,x_r$ be as in the theorem. Put
  $x_0=x_1+\dots+x_r$. Since each $S\in\iS$ is just a scalar multiple
  of a permutation on $x_i$'s, it follows that $x_0$ is a common
  eigenvector for $\iS$.  The ideal $I_{x_0}$ generated by $x_0$ is
  exactly the ideal generated by $Y$, hence is dense in $X$; it
  follows that $x_0$ is quasi-interior. It is left to verify
  uniqueness (of course, up to scaling). Indeed, suppose that $y$ is
  also a common eigenvector for $\iS$. Then for each $P\in\iP_r$ we
  have $y\in\Range P=Y$. It follows that $y$ is a linear combination
  of $x_i$'s. In particular, viewed as an element of $\mathbb R^r$, it
  is a common eigenvector of the transitive group of permutations
  $\iG$, so that it has to be of the form $(\lambda,\dots,\lambda)$;
  it follows that $y=\lambda x_0$.
 \end{proof}

Note that the semigroup in Example~\ref{ex:no-sr} has no common eigenvectors.

\begin{numbered}
  \emph{Other eigenvalues of $\iS$.}
  Since every element of $\iG$ is a permutation matrix with respect to
  the basis $x_1,\dots,x_r$ of $Y$, its Jordan form is diagonal and
  unimodular. It follows that every non-zero $S\in\iS$ has at least
  $r$ eigenvalues of modulus $r(S_Y)$ (counting geometric
  multiplicities). If we scale $S$ so that $r(S_Y)=1$ then
  $(S_Y)^{r!}$ is the identity of $Y$; it follows that these
  eigenvalues satisfy $\lambda^{r!}=1$.
\end{numbered}

\begin{numbered}\label{block-matrix}
  \emph{Block-matrix structure of $\iS$.} Let $X_i=\overline{I_{x_i}}$
  for each $i=1,\dots,r$. Then $X=X_1\oplus\dots\oplus X_r$ is a
  decomposition of $X$ into pair-wise disjoint closed ideals, and for
  every non-zero $S\in\iS$ the block-matrix of $S$ with respect to
  this decomposition has exactly one non-zero block in each row and in
  each column.
\end{numbered}

\begin{proposition}\label{sr-pR}
  If $T\in\iS$ is peripherally Riesz then $r(T_Y)=r(T)$. Furthermore,
  if $r(T)=1$ then the component of $T$ corresponding to $\sigma_{\rm
    per}(T)$ is unimodular.
\end{proposition}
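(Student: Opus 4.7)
The plan is to reduce to the case $r(T)=1$ by scaling and then exploit the dichotomy of Proposition~\ref{dych-proj-nilp} together with the $\iS$-invariance of $Y$. After scaling $T$ so that $r(T)=1$, I would observe that $\iS$ contains no non-zero quasi-nilpotent operators (remark following Lemma~\ref{main-group}). This rules out the nilpotent case of Proposition~\ref{dych-proj-nilp}, leaving us in the unimodular case: the peripheral spectral projection $P$ of $T$ lies in $\iS$, the restriction $T_{|X_1}$ is unimodular (which is already the second assertion of the proposition), and $T^{m_j}\to P$ for some strictly increasing sequence $(m_j)$ in $\mathbb N$.

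Next I would restrict the convergence $T^{m_j}\to P$ to $Y$. Proposition~\ref{sr}\eqref{sr-pres} applied to the non-zero operator $P\in\iS$ gives $P(Y)=Y$; combined with $\Range P=X_1$, this forces $Y\subseteq X_1$, and therefore $P_{|Y}=I_Y$. Hence restricting $T^{m_j}\to P$ to $Y$ yields $(T_Y)^{m_j}\to I_Y$ in $L(Y)$.

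Now, by Lemma~\ref{main-group}, $T_Y=r(T_Y)\cdot\Pi$ where $\Pi=\frac{1}{r(T_Y)}T_Y\in\iG$, and by Proposition~\ref{S_P-perm} $\Pi$ is a permutation matrix in the basis $x_1,\dots,x_r$. Since $\Pi$ has finite order, the sequence $\bigl(\Pi^{m_j}\bigr)$ takes only finitely many values, so passing to a subsequence we may assume it is constant, say equal to some permutation matrix $\Pi_0$. The relation $r(T_Y)^{m_j}\Pi_0\to I_Y$ then forces $\Pi_0$ to be a positive scalar multiple of the identity, which is possible for a permutation matrix only if $\Pi_0=I_Y$; consequently $r(T_Y)^{m_j}\to 1$. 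Since $m_j\to\infty$, this yields $r(T_Y)=1=r(T)$.

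The main obstacle I anticipate is the transition from the global convergence $T^{m_j}\to P$ on $X$ to a useable statement on $Y$: one has to notice that the $\iS$-invariance of $Y$ forces $Y\subseteq X_1$ and $P_{|Y}=I_Y$, which is precisely what couples the peripheral spectrum of $T$ to the finite-dimensional permutation structure on $Y$. Once this coupling is in hand, the conclusion reduces to the elementary observation that a convergent sequence of scalar multiples of a fixed permutation can only converge to the identity.
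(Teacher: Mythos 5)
Your proof is correct and follows essentially the same route as the paper: rule out the nilpotent case of Proposition~\ref{dych-proj-nilp} using the absence of quasi-nilpotent elements in $\iS$, obtain $T^{m_j}\to P$ with $P\in\iS$, and restrict the convergence to the invariant subspace $Y$. The only difference is in the finish: the paper simply notes that $r(T_Y)<1$ would force $P_Y=\lim_j (T_Y)^{m_j}=0$, contradicting Lemma~\ref{main-group}, whereas you first deduce $Y\subseteq X_1$ and $P_Y=I_Y$ and then invoke the permutation structure --- both endings are valid.
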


\begin{proof}
  Without loss of generality, $r(T)=1$. By Lemma~\ref{main-group},
  $\iS$ has no non-zero nilpotent elements. It follows that the
  nilpotent case in Proposition~\ref{dych-proj-nilp} is impossible,
  hence the peripheral spectral projection $P$ of $T$ is in $\iS$ and
  there is an increasing sequence $(m_j)$ in $\mathbb N$ with
  $T^{m_j}\to P$. In particular, $(T_Y)^{m_j}\to P_Y$.  It follows
  from $r(T)=1$ that $r(T_Y)\le 1$. Suppose that $r(T_Y)<1$. Then
  $(T_Y)^{m_j}\to 0$, hence $P_Y=0$.  But this contradicts $P_Y$ being
  an isomorphism by Lemma~\ref{main-group}.
\end{proof}

\begin{corollary}
  If every non-zero operator in $\iS$ is peripherally Riesz then
  spectral radius is multiplicative on $\iS$.
\end{corollary}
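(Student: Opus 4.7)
The plan is to reduce the computation of spectral radii on $\iS$ to spectral radii of $r\times r$ matrices on the finite-dimensional invariant subspace $Y$, where the structure theorem forces every non-zero element to be a positive scalar multiple of a permutation.

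First I would dispose of the trivial case: if $S=0$ or $T=0$ then both sides of $r(ST)=r(S)r(T)$ vanish, so assume $S,T\in\iS$ are non-zero. Since by hypothesis every non-zero element of $\iS$ is peripherally Riesz, both $S$ and $T$ are peripherally Riesz. Next I would observe that $ST$ is itself non-zero: by Lemma~\ref{main-group}, $S_Y$ and $T_Y$ are isomorphisms of $Y$, so $(ST)_Y = S_YT_Y$ is an isomorphism of $Y$, hence $ST\neq 0$, and therefore $ST$ is also peripherally Riesz by the standing assumption.

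Now I would apply Proposition~\ref{sr-pR} to each of $S$, $T$, and $ST$ to get $r(S)=r(S_Y)$, $r(T)=r(T_Y)$, and $r(ST)=r((ST)_Y)=r(S_YT_Y)$. So it suffices to show $r(S_YT_Y)=r(S_Y)r(T_Y)$ in $L(Y)$. By Proposition~\ref{S_P-perm} (applied to $\iS_P=\iS_Y$ for any $P\in\iP_r$), in the distinguished positive basis $x_1,\dots,x_r$ of $Y$ each non-zero element of $\iS_Y$ is a positive scalar multiple of an element of the group $\iG$ of permutation matrices; explicitly, $S_Y=r(S_Y)\sigma$ and $T_Y=r(T_Y)\tau$ for permutation matrices $\sigma,\tau\in\iG$.

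Then $S_YT_Y=r(S_Y)r(T_Y)\,\sigma\tau$, and $\sigma\tau$ is again a permutation matrix, so $r(\sigma\tau)=1$. Hence $r(S_YT_Y)=r(S_Y)r(T_Y)$, which combined with the displayed identifications yields $r(ST)=r(S)r(T)$. There is really no hard step here: everything has been arranged by the previous results, and the only small point to verify is that $ST\neq 0$ so that Proposition~\ref{sr-pR} is applicable to it, which is immediate from Lemma~\ref{main-group}.
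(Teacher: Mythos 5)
Your proof is correct and follows the same route as the paper: reduce to $Y$ via Proposition~\ref{sr-pR} and use that $S_Y$, $T_Y$ are positive scalar multiples of permutation matrices. The only difference is that you make explicit the (easy but worth noting) point that $ST\ne 0$, so that Proposition~\ref{sr-pR} indeed applies to $ST$; the paper leaves this implicit.
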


\begin{proof}
  Let $S,T\in\iS$. By Proposition~\ref{sr-pR}, $r(S)=r(S_Y)$,
  $r(T)=r(T_Y)$, and $r(ST)=r(S_YT_Y)$. Since $S_Y$ and $T_Y$ are
  scalar multiples of permutation matrices by Theorem~\ref{main}, it
  follows that $r(S_YT_Y)=r(S_Y)r(T_Y)$.
\end{proof}

For each non-zero $S\in\iS$ we have $r(S^*)=r(S)\ge r(S_Y)>0$ by
Lemma~\ref{main-group}. The following is a refinement of this fact.

\begin{corollary}\label{sr-loc-r}
  For every non-zero $S\in\iS$ and $x^*\in X^*_+$, we have 
  \begin{math}
    \liminf_n\norm{S^{*n}x^*}^{\frac{1}{n}}\ge r(S_Y).
  \end{math}
  In particular, $S^*$ is strictly positive.
\end{corollary}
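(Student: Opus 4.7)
The plan is to leverage the common quasi-interior eigenvector $x_0$ produced by Corollary~\ref{sr-eigen}. Since $Sx_0 = r(S_Y)x_0$, iterating gives $S^n x_0 = r(S_Y)^n x_0$ for all $n \ge 0$, so pairing with $x^*$ yields
\begin{equation*}
  \langle S^{*n}x^*, x_0\rangle = \langle x^*, S^n x_0\rangle = r(S_Y)^n \langle x^*, x_0\rangle.
\end{equation*}

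The next step is to observe that $\langle x^*, x_0\rangle > 0$ whenever $x^* \ne 0$. This is the standard consequence of quasi-interiority, but also follows directly from Corollary~\ref{sr-eigen}: the ideal $I_{x_0}$ is dense in $X$, and a positive functional $x^*$ that vanishes on $x_0$ must annihilate $I_{x_0}$ and therefore vanish. Combining this with the trivial estimate $\langle S^{*n}x^*, x_0\rangle \le \norm{S^{*n}x^*}\cdot\norm{x_0}$ gives
\begin{equation*}
  \norm{S^{*n}x^*}^{1/n} \ge r(S_Y)\left(\frac{\langle x^*, x_0\rangle}{\norm{x_0}}\right)^{1/n},
\end{equation*}
and since the bracketed factor tends to $1$ as $n \to \infty$, taking $\liminf$ yields the advertised inequality.

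The strict positivity of $S^*$ falls out immediately: Lemma~\ref{main-group} guarantees $r(S_Y) > 0$, so the estimate above forces $\norm{S^{*n}x^*}$ to grow at least geometrically, whence $S^{*n}x^* \ne 0$ for all sufficiently large $n$. This excludes $S^*x^* = 0$, since that equation would propagate to every iterate.

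I do not foresee any substantive obstacle here. The corollary is essentially a two-line dual-pairing computation whose only non-trivial input, the existence of a quasi-interior common eigenvector with positive eigenvalue, has already been established in Lemma~\ref{main-group} and Corollary~\ref{sr-eigen}.
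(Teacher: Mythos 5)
Your argument is correct and is essentially the paper's own proof: both pair $S^{*n}x^*$ with the common eigenvector $x_0$ from Corollary~\ref{sr-eigen}, use quasi-interiority of $x_0$ to get $x^*(x_0)>0$, and deduce $r(S_Y)^n\le\frac{\norm{x_0}}{x^*(x_0)}\norm{S^{*n}x^*}$ before taking $n$-th roots. Your added remarks on why $x^*(x_0)>0$ and why strict positivity follows are just the details the paper leaves as ``straightforward.''
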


\begin{proof}
  For each $n$, we have
  \begin{math}
    (S^{*n}x^*)(x_0)=x^*(S^nx_0)=r(S_Y)^nx^*(x_0)
  \end{math}
  by Corollary~\ref{sr-eigen}. Since $x_0$ is quasi-interior, we have
  $x^*(x_0)\ne 0$, so that
  \begin{math}
    r(S_Y)^n\le\frac{\norm{x_0}}{x^*(x_0)}\norm{S^{*n}x^*}.
  \end{math}
  The result is now straightforward.
\end{proof}

\begin{remark}\label{dual-disj}
  Let $x_1,\dots,x_r$ be a disjoint positive basis of $Y$ as before. Suppose
  that $P\in\iP_r$, then, as in~\ref{fin-rank-proj}, we have
  $P=\sum_{i=1}^rx_i^*\otimes x_i$ for some positive functionals
  $x_1^*\dots,x_r^*$. Observe that these functionals are
  disjoint. Indeed, by Riesz-Kantorovich formula, if $i\ne j$ then
  \begin{displaymath}
    (x_i^*\wedge x_j^*)(x_0)
    =\inf\bigl\{x_i^*(u)+x_j^*(v)\mid u,v\in[0,x_0], u+v=x_0\bigr\}
    \le x_i^*(x_j)+x_j^*(x_0-x_j)
    =0,
  \end{displaymath}
  hence $(x_i^*\wedge x_j^*)(x_0)=0$. Since $x_0$ is quasi-interior,
  it follows that $x_i^*\wedge x_j^*=0$.
\end{remark}

\section{Semigroups with a unique rank $r$ projection}
\label{sec:up}

As before, we assume that $\iS$ is an ideal irreducible
$\Rplus$-closed semigroup of positive operators on a Banach lattice
$X$ with $r=\minrank\iS<+\infty$.

In the previous section we showed that if all the rank $r$
projections have the same range then $\iS$ has some nice
properties. In this section, we will show that many of these
properties are also enjoyed by the dual semigroup $\iS^*=\{S^*\mid
S\in\iS\}$ provided that $\iS$ has a \emph{unique} projection of rank
$r$. Even though this is, obviously, a stronger assumption, the
following proposition implies that it is still satisfied for
commutative semigroups. It is analogous to Lemmas~5.2.7 and~8.7.21
of~\cite{Radjavi:00}.

\begin{proposition}\label{center}
  The following are equivalent:
  \begin{enumerate}
  \item\label{center-unique} $\iP_r$ consists of a single projection;
  \item\label{center-all} Every $P\in\iP_r$ commutes with $\iS$;
  \item\label{center-some} Some $P\in\iP_r$ commutes with $\iS$.
  \end{enumerate}
\end{proposition}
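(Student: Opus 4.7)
The plan is to prove the cycle $\eqref{center-unique}\Rightarrow\eqref{center-all}\Rightarrow\eqref{center-some}\Rightarrow\eqref{center-unique}$; the middle implication is immediate since $\iP_r$ is non-empty. Throughout, the key external tools are Proposition~\ref{sr} (which turns ``one projection's range is invariant'' into ``all rank-$r$ projections share a common range $Y$, and $S(Y)=Y$ for every non-zero $S\in\iS$'') and Theorem~\ref{r-exists-proj} (which produces left/right rank-$r$ projections for every $S\in\iS_r$).

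For $\eqref{center-unique}\Rightarrow\eqref{center-all}$, suppose $\iP_r=\{P\}$, so $Y:=\Range P$ is trivially the common range of all rank-$r$ projections. Fix a non-zero $S\in\iS$. By Proposition~\ref{sr}\eqref{sr-pres} we have $S(Y)=Y$, which immediately gives $SP=PSP\ne 0$. I would then observe that $PS\ne 0$: otherwise $PSP=0$, forcing $SP=0$, a contradiction. Thus $PS$ is a non-zero operator of rank at most $r=\minrank\iS$, hence $PS\in\iS_r$. Theorem~\ref{r-exists-proj} supplies some $Q\in\iP_r$ with $(PS)Q=PS$, and uniqueness of the rank-$r$ projection forces $Q=P$, so $PSP=PS$. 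Combining, $PS=PSP=SP$.

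For $\eqref{center-some}\Rightarrow\eqref{center-unique}$, suppose $P\in\iP_r$ commutes with every $S\in\iS$. Then $\Range P$ is $\iS$-invariant (if $y=Pz$, then $Sy=SPz=PSz\in\Range P$), so Proposition~\ref{sr}\eqref{sr-some-inv}$\Rightarrow$\eqref{sr-proj} forces every $Q\in\iP_r$ to satisfy $\Range Q=\Range P=:Y$. It remains to identify the kernels. Since $Q\in\iS$, we have $PQ=QP$; for $x\in\ker P$, $PQx=QPx=0$, so $Qx\in\Range Q\cap\ker P=Y\cap\ker P=\{0\}$ (as $P$ is the identity on $Y$), whence $\ker P\subseteq\ker Q$. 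The symmetric computation starting from $x\in\ker Q$ yields $\ker Q\subseteq\ker P$. Equal ranges and equal kernels give $P=Q$, so $\iP_r=\{P\}$.

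The proof is mostly a careful accounting of what is already available from Proposition~\ref{sr} and Theorem~\ref{r-exists-proj}; the only delicate step is the verification that $PS\ne 0$ for every non-zero $S\in\iS$ in the first implication, which is where the ``invertibility'' content of Proposition~\ref{sr}\eqref{sr-pres} is really used. Once that is in hand, the rest is a formal trick: existence of a right rank-$r$ projection for $PS$ plus uniqueness of $P$ forces $PS=PSP$, and mirror-image reasoning via $\Range SP\subseteq\Range P$ gives $SP=PSP$.
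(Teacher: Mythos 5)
Your proof is correct and takes essentially the same route as the paper's: both establish (i)$\Rightarrow$(ii) by combining Proposition~\ref{sr}\eqref{sr-pres} with Theorem~\ref{r-exists-proj} and the uniqueness of the rank-$r$ projection, and both get (iii)$\Rightarrow$(i) by deducing invariance of $\Range P$, invoking Proposition~\ref{sr} to equalize the ranges, and then using commutativity to force $P=Q$ (the paper writes this last step more compactly as $P=QP=PQ=Q$, where you match kernels instead, but the content is the same).
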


\begin{proof}
  \eqref{center-unique}$\Rightarrow$\eqref{center-all} Suppose that
  $\iP_r=\{P\}$ and let $0\ne S\in\iS$. It follows from
  Proposition~\ref{sr}\eqref{sr-pres} that $PSP\ne 0$. Hence, $PS$ and
  $SP$ are non-zero elements of $\iS_r$. Applying
  Theorem~\ref{r-exists-proj} to $PS$ and $SP$ we get
  $PS=PSP=SP$.

  \eqref{center-all}$\Rightarrow$\eqref{center-some} is trivial.

  \eqref{center-some}$\Rightarrow$\eqref{center-unique} Suppose
  $P\in\iP_r$ commutes with $\iS$. It follows that $PSP=SP$ for all
  $S\in\iS$, hence by Proposition~\ref{sr}\eqref{sr-some-inv}, all the
  projections in $\iP_r$ have the same range. Therefore,
  $P=QP=PQ=Q$ for every $Q\in\iP_r$.
\end{proof}

Recall that by Proposition~\ref{S_P-perm} and Remark~\ref{x0}, for
each $P\in\iP_r$, there is a basis $x_1,\dots,x_n$ of $X_P=\Range P$
such that the group $\iG_P$ can be viewed as a transitive group of
permutations of the vectors $x_1,\dots,x_r$; it follows that
$x_0=x_1+\dots+x_n$ is a common eigenvector of every operator in $\iS$
which leaves $\Range P$ invariant. Then we observed in
Section~\ref{sec:sr} that if all the projections in $\iP_r$ have the
same range, then this range is invariant under all operators in $\iS$
and, therefore, $x_0$ is a common eigenvector for $\iS$. 

\textbf{Throughout the rest of the section, we assume that $\iS$ has a
  unique projection $P$ of rank $r$.} This condition allows us to
``dualize'' the results of Section~\ref{sec:sr} for $\iS^*$, even
though $\iS^*$ may not be ideal irreducible.

Suppose $\iP_r=\{P\}$. As in Section~\ref{sec:sr}, we denote $Y=\Range
P=X_P$. We can write it as $P=\sum_{i=1}^rx_i^*\otimes x_i$ as in
Remark~\ref{dual-disj}. It is easy to see that $P^*$ is a projection
onto $X_{P^*}:=\Range P^*=\Span\{x_1^*,\dots,x_r^*\}$ in $X^*$.  For
every non-zero $S\in\iS$, it follows from Proposition~\ref{center}
that $PSP=SP=PS$, so that $P^*S^*P^*=S^*P^*$, and, therefore,
$X_{P^*}$ in invariant under $S^*$. Note that
$r(P^*S^*P^*)=r(PSP)=r(S_Y)\ne 0$ by Lemma~\ref{main-group}. As in
Section~\ref{sec:sr}, if $r(S_Y)=1$ then $S\in\iG$ (since $P$ is
unique, we write $\iG_P=\iG$) and $S$ acts as a permutation matrix on
$x_1,\dots,x_r$. It follows from $x_i^*(x_j)=\delta_{ij}$ that $S^*$
acts as a permutation matrix on $x_1^*,\dots,x_r^*$ (namely, as the
transpose of the matrix of $S$ on $x_1,\dots,x_r$). Moreover, since
$\iG$ is transitive on $x_1,\dots,x_r$, the group $\iG^*:=\{S^*\mid
S\in\iG\}$ is transitive on $x_1^*,\dots,x_r^*$. In particular, we
have $S^*x_0^*=x_0^*$, where $x_0^*=x_1^*+\dots+x_r^*$.

\begin{corollary}\label{dual-eigen}
  For every non-zero $S\in\iS$, the operator $\frac{1}{r(S_Y)}S^*$ acts
  as a permutation of $x_1^*,\dots,x_r^*$. In particular,
  $S^*x_0^*=r(S_Y)x_0^*$ for each non-zero $S\in\iS$. The functional
  $x_0^*$ is strictly positive and is a unique common eigenfunctional for $\iS^*$.
\end{corollary}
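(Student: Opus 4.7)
The plan is to establish the four assertions of the corollary in sequence, leveraging the structural analysis carried out immediately before the statement.

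For the permutation claim and the eigenfunctional equation, given a non-zero $S\in\iS$, Lemma~\ref{main-group} gives $r(S_Y)>0$, so $T:=\frac{1}{r(S_Y)}S$ lies in $\iS$ (by $\Rplus$-closedness) with $r(T_Y)=1$; hence $T\in\iG$. The discussion just before the corollary then shows that $T^*$ permutes $x_1^*,\dots,x_r^*$ and, as a consequence, fixes their sum $x_0^*$. Rescaling immediately yields both claims: $\frac{1}{r(S_Y)}S^*$ acts as a permutation of the $x_i^*$'s and $S^*x_0^*=r(S_Y)x_0^*$.

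Strict positivity of $x_0^*$ would follow from Corollary~\ref{faith}: given $x>0$, the corollary provides some $Q\in\iP_r$ with $Qx\neq 0$, and the hypothesis $\iP_r=\{P\}$ forces $Q=P$, so $Px=\sum_{i=1}^r x_i^*(x)x_i\neq 0$. Positivity of each $x_i^*$ then gives $x_i^*(x)>0$ for some $i$, whence $x_0^*(x)>0$.

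The main content is uniqueness. I would take a non-zero positive common eigenfunctional $y^*$ of $\iS^*$, with $S^*y^*=\mu_S y^*$ and $\mu_S\ge 0$ for each non-zero $S\in\iS$. Testing at $S=P$, the identity $(P^*)^2=P^*$ forces $\mu_P\in\{0,1\}$. The case $\mu_P=0$ would yield $y^*(x_i)=(P^*y^*)(x_i)=0$ for every $i$, hence $y^*(x_0)=0$; since $x_0$ is quasi-interior by Corollary~\ref{sr-eigen} and $y^*\ge 0$, this forces $y^*=0$, a contradiction. Thus $\mu_P=1$, and $y^*=P^*y^*\in\Span\{x_1^*,\dots,x_r^*\}$; write $y^*=\sum c_i x_i^*$ with $c_i=y^*(x_i)\ge 0$. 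To conclude that all $c_i$ coincide, fix $S\in\iG$ and let $\sigma$ be the corresponding permutation of $\{x_i^*\}$. The identity $S^*y^*=\mu_Sy^*$ expands componentwise to $\mu_S c_k=c_{\sigma^{-1}(k)}$. Letting $m$ be the order of $\sigma$, we have $(S^*)^m y^*=y^*$ on the one hand and $\mu_S^m y^*$ on the other, so $\mu_S^m=1$ and thus $\mu_S=1$ since $\mu_S\ge 0$. Therefore $c_k=c_{\sigma^{-1}(k)}$ for every $k$, and transitivity of $\iG^*$ on $\{x_1^*,\dots,x_r^*\}$ forces $c_1=\dots=c_r$. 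Hence $y^*$ is a positive scalar multiple of $x_0^*$.

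The main obstacle is ruling out the $\mu_P=0$ alternative in the uniqueness argument: this is precisely where the uniqueness of $P$ in $\iP_r$ and the quasi-interiority of $x_0$ combine in an essential way. Everything else is a direct translation to $\iS^*$ of the permutation-group structure of $\iG$ established in the previous section.
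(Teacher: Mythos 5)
Your proof is correct and follows essentially the same route as the paper: the permutation claim and the identity $S^*x_0^*=r(S_Y)x_0^*$ come from the discussion preceding the corollary, strict positivity comes from irreducibility (the paper invokes Proposition~\ref{irr-chars}(iii) and the eigen-equation directly, while you route it through Corollary~\ref{faith} and the uniqueness of $P$ --- an equivalent variation), and uniqueness comes from the fact that a common eigenfunctional must lie in $\Range P^*=\Span\{x_1^*,\dots,x_r^*\}$ and is then constrained by the transitive permutation group $\iG^*$. Your uniqueness argument is actually spelled out more carefully than the paper's, which simply defers to the proof of Corollary~\ref{sr-eigen}; in particular, your explicit elimination of the case $P^*y^*=0$ via quasi-interiority of $x_0$ and your derivation of $\mu_S=1$ make precise steps the paper leaves implicit.
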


\begin{proof}
  Uniqueness is proved exactly as in Corollary~\ref{sr-eigen}.  It is
  left to prove that $x_0^*$ is strictly positive. Fix $x>0$. By
  Proposition~\ref{irr-chars}\eqref{irr-loc}, there exists $S\in\iS$
  with $x_0^*(Sx)\ne 0$. Since $r(S_Y)\ne 0$ by Lemma~\ref{main-group}
  and $x_0^*(Sx)=(S^*x_0^*)x=r(S_Y)x_0^*(x)$, we have $x_0^*(x)\ne 0$.
\end{proof}

In view of Corollary~\ref{dual-eigen}, the following fact is the
dual version of  Corollary~\ref{sr-loc-r}; the proof is
analogous. Corollaries~\ref{dual-eigen} and~\ref{up-loc-r} extend
Lemma~5.2.8 and Corollary~8.7.22 in~\cite{Radjavi:00}.

\begin{corollary}\label{up-loc-r}
  For every $x>0$ and every non-zero $S\in\iS$ we have
  $\liminf_n\norm{S^nx}^{\frac{1}{n}}\ge r(S_Y)$. In particular, $S$
  is strictly positive.
\end{corollary}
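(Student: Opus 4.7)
The plan is to mimic the proof of Corollary~\ref{sr-loc-r} with the roles of $x_0$ and $x_0^*$ interchanged, using Corollary~\ref{dual-eigen} as the key input in place of Corollary~\ref{sr-eigen}. Fix $x>0$ and a non-zero $S\in\iS$. By Corollary~\ref{dual-eigen} we have $S^*x_0^*=r(S_Y)x_0^*$, hence by iterating, $S^{*n}x_0^*=r(S_Y)^nx_0^*$ for every $n$. Pairing with $x$ gives
\begin{equation*}
  r(S_Y)^n x_0^*(x)=(S^{*n}x_0^*)(x)=x_0^*(S^nx).
\end{equation*}

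The crucial point is that $x_0^*(x)\ne 0$: by Corollary~\ref{dual-eigen}, $x_0^*$ is strictly positive and $x>0$, so $x_0^*(x)>0$. Bounding $x_0^*(S^nx)\le\norm{x_0^*}\norm{S^nx}$ and rearranging yields
\begin{equation*}
  \norm{S^nx}\ge\frac{x_0^*(x)}{\norm{x_0^*}}\,r(S_Y)^n,
\end{equation*}
so that $\norm{S^nx}^{1/n}\ge\bigl(x_0^*(x)/\norm{x_0^*}\bigr)^{1/n}r(S_Y)$. Since the constant factor tends to $1$, taking $\liminf$ gives $\liminf_n\norm{S^nx}^{1/n}\ge r(S_Y)$, which is the desired inequality.

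For the second assertion, recall from Lemma~\ref{main-group} that $r(S_Y)>0$ for every non-zero $S\in\iS$. The inequality just established then shows $\norm{S^nx}>0$ for all sufficiently large $n$, which forces $Sx\ne 0$ (otherwise all higher powers would vanish on $x$). Together with positivity, this gives $Sx>0$, i.e., $S$ is strictly positive. No serious obstacle is anticipated since Corollary~\ref{dual-eigen} supplies both the eigenfunctional identity and the strict positivity of $x_0^*$ needed for the argument; the entire proof is essentially a one-paragraph duality exercise.
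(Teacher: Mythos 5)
Your proof is correct and is exactly the argument the paper intends: the paper explicitly says the proof of this corollary is analogous to that of Corollary~\ref{sr-loc-r}, namely pairing $S^nx$ with the eigenfunctional $x_0^*$ via $x_0^*(S^nx)=(S^{*n}x_0^*)(x)=r(S_Y)^nx_0^*(x)$ and using the strict positivity of $x_0^*$ from Corollary~\ref{dual-eigen}. (A minor simplification: the case $n=1$ of your inequality already yields $\norm{Sx}>0$ directly, with no need to argue from large $n$.)
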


This means that not only every non-zero $S\in\iS$ is not
quasi-nilpotent, but it is not even \emph{locally} quasi-nilpotent.

We would like to point out that Corollaries~\ref{dual-eigen}
and~\ref{up-loc-r} generally fail if instead of assuming that $\iS$
has a unique minimal projection we only assume, as in
Section~\ref{sec:sr}, that all the rank $r$ projections in $\iS$ have
the same range.  Indeed, the semigroups in
Examples~\ref{ex:non-uniq-proj} and~\ref{ex:non-irr-proj} are
irreducible, $\Rplus$-closed, have exactly two distinct projections
$P$ and $Q$ of rank $r$ each, and they have the same
range. Nevertheless it is easy to see that the dual semigroup $\iS^*$
in Example~\ref{ex:non-uniq-proj} has no common eigenfunctionals (as
$P^*$ and $Q^*$ have no common eigenfunctionals), while the operators
$P$ and $Q$ in Example~\ref{ex:non-irr-proj} are not strictly
positive.

Recall that a positive operator $T$ is \term{strongly expanding} if
$Tx$ is quasi-interior whenever $x>0$.

\begin{corollary}
  The projection $P$ is strongly expanding iff $r=1$.
\end{corollary}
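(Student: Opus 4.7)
The plan is to prove both directions directly from the explicit description of $P$ established in \ref{fin-rank-proj} and the subsequent structural results. Recall we have $P=\sum_{i=1}^r x_i^*\otimes x_i$, where by Theorem~\ref{main} the vectors $x_1,\dots,x_r$ are positive and pairwise disjoint, by Remark~\ref{dual-disj} the functionals $x_1^*,\dots,x_r^*$ are positive and pairwise disjoint, and by Corollary~\ref{sr-eigen} (respectively Corollary~\ref{dual-eigen}) the sum $x_0=x_1+\dots+x_r$ is quasi-interior while $x_0^*=x_1^*+\dots+x_r^*$ is strictly positive.

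For the ``$\Leftarrow$'' direction, assume $r=1$. Then $P=x_0^*\otimes x_0$. For any $x>0$, strict positivity of $x_0^*$ gives $x_0^*(x)>0$, so $Px=x_0^*(x)\,x_0$ is a strictly positive scalar multiple of $x_0$. Since $x_0$ is quasi-interior and quasi-interiority is invariant under multiplication by positive scalars, $Px$ is quasi-interior, so $P$ is strongly expanding.

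For the ``$\Rightarrow$'' direction I would argue by contrapositive. Assume $r\ge 2$ and take the vector $x_1>0$. Since $x_i^*(x_j)=\delta_{ij}$, we have $Px_1=x_1$. Because $x_1\perp x_2$ and $x_2>0$, the principal ideal $I_{x_1}$ is contained in the disjoint complement $\{x_2\}^d$: indeed, $y\in I_{x_1}$ means $\abs{y}\le c\abs{x_1}$ for some $c$, so $\abs{y}\wedge\abs{x_2}\le c\abs{x_1}\wedge\abs{x_2}=0$. The set $\{x_2\}^d$ is a closed ideal that does not contain $x_2$, hence is a proper closed subspace of $X$. Therefore $\overline{I_{x_1}}\subseteq\{x_2\}^d\subsetneq X$, so $x_1=Px_1$ is not quasi-interior, and $P$ is not strongly expanding.

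There is no real obstacle here: the corollary is an immediate consequence of the machinery developed in Sections~\ref{sec:sr} and~\ref{sec:up}. The only slightly delicate point is knowing that the basis vectors are disjoint in $X$ (not merely in $Y$ with its induced lattice structure), but this is precisely the content of Theorem~\ref{main}, whose proof already went through Corollary~\ref{faith} to upgrade disjointness from $Y$ to $X$.
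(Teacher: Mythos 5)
Your proof is correct and follows essentially the same route as the paper: for $r=1$ the paper deduces that $x_1$ is quasi-interior from the density of the ideal generated by $\Range P$ (Corollary~\ref{ideal-dense}) and uses strict positivity of $P$ from Corollary~\ref{up-loc-r}, which for a rank-one projection $P=x_0^*\otimes x_0$ is exactly your appeal to Corollaries~\ref{sr-eigen} and~\ref{dual-eigen}; for $r>1$ both arguments observe that $Px_1=x_1\perp x_2$ is not quasi-interior. No gaps.
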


\begin{proof}
  Note that $P$ is strictly positive by Corollary~\ref{up-loc-r},
  and the ideal generated by $\Range P$ is dense in $X$ by
  Corollary~\ref{ideal-dense}.  If $r=1$ then $\Range P$ is the span
  of $x_1$, hence $x_1$ is quasi-interior and $Px$ is a positive
  scalar multiple of $x_1$ whenever $x>0$.  On the other hand, if
  $r>1$ then $Px_1=x_1\perp x_2$, hence $Px_1$ is not quasi-interior.
\end{proof}

The following proposition should be compared with
Proposition~\ref{sr-pR}.

\begin{proposition}\label{r-sublat}
  Let $0\ne S\in\iS$. If $r(S)$ is an eigenvalue of $S$ or $S^*$ then
  $r(S_Y)=r(S)$, and the eigenspace is a sublattice.
\end{proposition}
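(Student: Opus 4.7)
The plan is to use the common eigenvector $x_0$ (satisfying $Sx_0=r(S_Y)x_0$, from Corollary~\ref{sr-eigen}) and the common eigenfunctional $x_0^*$ (satisfying $S^*x_0^*=r(S_Y)x_0^*$, from Corollary~\ref{dual-eigen}) as ``test objects'' against a modulus inequality to pin down $r(S)$ against $r(S_Y)$. Since $Y$ is a finite-dimensional $\iS$-invariant subspace, $\sigma(S_Y)\subseteq\sigma(S)$ and hence $r(S)\ge r(S_Y)>0$ by Lemma~\ref{main-group}, so the task is only to prove the reverse inequality whenever $r(S)$ is actually realized as an eigenvalue.

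If $Sy=r(S)y$ with $y\ne 0$ in $X$, the triangle inequality yields $r(S)\abs{y}=\abs{Sy}\le S\abs{y}$, so $z:=S\abs{y}-r(S)\abs{y}\ge 0$. Applying $x_0^*$ and moving $S^*$ across gives
\[
x_0^*(z)=(S^*x_0^*)(\abs{y})-r(S)\,x_0^*(\abs{y})=\bigl(r(S_Y)-r(S)\bigr)x_0^*(\abs{y}).
\]
Since $x_0^*$ is strictly positive (Corollary~\ref{dual-eigen}) and $\abs{y}>0$, $x_0^*(\abs{y})>0$; combined with $x_0^*(z)\ge 0$ this forces $r(S_Y)\ge r(S)$, hence equality. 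Dually, if $S^*x^*=r(S)x^*$ with $x^*\ne 0$, then $\psi:=S^*\abs{x^*}-r(S)\abs{x^*}\ge 0$, and evaluation at the quasi-interior vector $x_0$ yields
\[
\psi(x_0)=\abs{x^*}(Sx_0)-r(S)\abs{x^*}(x_0)=\bigl(r(S_Y)-r(S)\bigr)\abs{x^*}(x_0).
\]
Quasi-interiority of $x_0$ together with $\abs{x^*}\ne 0$ gives $\abs{x^*}(x_0)>0$, and $\psi(x_0)\ge 0$ again yields $r(S_Y)=r(S)$.

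For the sublattice claim, the equality $r(S_Y)=r(S)$ forces $x_0^*(z)=0$ in the first case; strict positivity of $x_0^*$ combined with $z\ge 0$ then gives $z=0$, i.e., $S\abs{y}=r(S)\abs{y}$, so $\abs{y}$ is again an eigenvector of $S$ at $r(S)$. Similarly, in the dual case $\psi(x_0)=0$ and $\psi\ge 0$ together with $x_0$ being quasi-interior force $\psi=0$, because any positive functional vanishing at a quasi-interior point is identically zero (approximate each $x\in X_+$ in norm by $\min\{x,nx_0\}$); hence $S^*\abs{x^*}=r(S)\abs{x^*}$. In both cases the eigenspace is closed under the modulus operation and is therefore a sublattice. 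The only mildly subtle point is the last implication about positive functionals and quasi-interior vectors; everything else reduces to the defining properties of $x_0$ and $x_0^*$ established in the previous section.
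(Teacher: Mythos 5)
Your proof is correct and follows essentially the same route as the paper: both arguments test the positive element $S\abs{y}-c\abs{y}$ against the strictly positive eigenfunctional $x_0^*$ (respectively, test $S^*\abs{x^*}-c\abs{x^*}$ at the quasi-interior eigenvector $x_0$) and use $S^*x_0^*=r(S_Y)x_0^*$, $Sx_0=r(S_Y)x_0$ to force both the equality $r(S_Y)=r(S)$ and the vanishing of the positive remainder. The only difference is bookkeeping — the paper subtracts $r(S_Y)\abs{x}$ and gets $x_0^*(z)=0$ at once, while you subtract $r(S)\abs{y}$ and extract the inequality first — and your justification that a positive functional vanishing at a quasi-interior point is zero is exactly the fact the paper invokes.
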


\begin{proof}
  Suppose that $Sx=r(S)x$ for some $x\ne 0$. It follows from
  $r(S_Y)\le r(S)$ that
  \begin{equation}
    \label{eq:up-compar}
    r(S_Y)\abs{x}\le r(S)\abs{x}=\abs{Sx}\le S\abs{x},
  \end{equation}
  so that $S\abs{x}-r(S_Y)\abs{x}\ge 0$. On the other hand,
  Corollary~\ref{dual-eigen} yields
  $x_0^*\bigl(S\abs{x}-r(S_Y)\abs{x}\bigr)=0$. Since $x_0^*$
  is strictly positive, we have $S\abs{x}=r(S_Y)\abs{x}$. Combining
  this with~\eqref{eq:up-compar}, we get $r(S_Y)=r(S)$. It
  also follows that $\abs{x}$ is also in the eigenspace, so that the
  eigenspace is a sublattice.

  The proof in the case when $r(S)$ is an eigenvalue of $S^*$ is
  similar in view of the fact that $x_0$ is quasi-interior and,
  therefore, acts as a strictly positive functional on $X^*$.
\end{proof}

\begin{example}
  Fix $n>2$ and let $\iS$ be the semigroup of all positive
  scalar multiples of all permutation matrices in $M_n(\mathbb
  R)$. Then $\iS$ is not commutative; nevertheless, the identity
  matrix is the unique element of $\iP_r$.
\end{example}

\subsection*{Commutative semigroups}
All the results of Sections~\ref{sec:sr} and~\ref{sec:up} apply to
commutative semigroups. In particular, the group $\iG$ is a
commutative transitive semigroup of permutation matrices. Every matrix
in such a group is a direct sum of cycles of equal lengths; it
follows, in particular, that $S^r_{|Y}$ is a multiple of the identity
on $Y$ for each $S\in\iS$. See \cite[Lemma~5.2.11]{Radjavi:00}) for a
proof and further properties of such groups of matrices.

\section{Applications to finitely generated semigroups}
\label{sec:app}

\subsection*{Singly generated semigroups}
Suppose that $T$ is a positive ideal irreducible peripherally Riesz
operator on a Banach lattice $X$. We now present a version of
Perron-Frobenius Theorem for $T$, extending Corollaries~5.2.3
and~8.7.24 in~\cite{Radjavi:00}. In addition, we completely describe
$\Rplus T$ (cf. Proposition~\ref{dych-proj-nilp}). For simplicity,
scaling $T$ if necessary, we assume that $r(T)=1$. Let $X=X_1\oplus
X_2$ be the spectral decomposition for $T$ where $X_1$ is the subspace
for $\sigma_{\rm per}(T)$, and $T=T_1\oplus T_2$ the corresponding
decomposition of $T$. Clearly, $\Rplus T$ is ideal irreducible.  Since
it is commutative, all the results of Sections~\ref{sec:sr}
and~\ref{sec:up} apply to it. We will see that, surprisingly, the
asymptotic part of $\Rplus T$ is very small: it consists of finitely
many operators and their positive scalar multiples.

\begin{theorem}\label{single}
  Under the preceding assumptions, $\dim X_1=\minrank\Rplus T$, $X_1$
  has a basis of disjoint positive vectors $x_1,\dots,x_r$ such that
  $T_1$ is a cyclic permutation of $x_1,\dots,x_r$, and $\Rplus T$
  consists precisely of all the powers of $T$, of the operators
  $T^k_1\oplus 0$ for $k=0,\dots,r-1$, and of their positive scalar
  multiples (and zero).
\end{theorem}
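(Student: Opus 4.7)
The plan is to combine the dichotomy of Proposition~\ref{dych-proj-nilp} with the commutative-semigroup machinery of Sections~\ref{sec:sr} and~\ref{sec:up}. First, note that $\Rplus T$ is ideal irreducible (any closed ideal invariant under $T$ is automatically invariant under positive scalar multiples, powers, and norm limits), commutative, $\Rplus$-closed, and contains the peripherally Riesz operator $T$. Theorem~\ref{irr-exist-proj} therefore gives $r:=\minrank\Rplus T<\infty$ together with a projection $Q\in\Rplus T$ of rank $r$, and by Proposition~\ref{center} (using commutativity) this projection is unique. On the other hand, Proposition~\ref{dych-proj-nilp} applied to $T$ is a dichotomy, and the existence of $Q$ rules out the nilpotent case; hence we sit in the unimodular case, so the peripheral spectral projection $P$ onto $X_1$ lies in $\Rplus T$ and is its only nonzero projection. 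Therefore $Q=P$, and $\dim X_1=\rank P=r$, which is the first assertion.

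Setting $Y:=\Range P=X_1$, I apply Theorem~\ref{main} to get a disjoint positive basis $x_1,\dots,x_r$ of $Y$ on which every $S\in\Rplus T$ acts as a positive scalar multiple of a permutation. Proposition~\ref{sr-pR} gives $r(T_Y)=r(T)=1$, so $T_1=T_Y$ itself acts as a permutation $\sigma$ of $x_1,\dots,x_r$. By Proposition~\ref{S_P-perm}, the group $\iG=\iG_P$ is transitive on these vectors. On the other hand every element of $\iG$ is $S_Y$ for some $S\in\Rplus T$ with $r(S_Y)=1$: if $S=cT^n$ then $S_Y=cT_1^n$ with $c=1$, while if $S=\lim_j b_jT^{n_j}$, the finite order of the permutation $\sigma$ makes $(T_1^{n_j})$ take only finitely many values, so along a subsequence $T_1^{n_j}=T_1^k$ for a fixed $k$, forcing $S_Y=T_1^k$. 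Thus $\iG=\langle\sigma\rangle$, and its transitivity forces $\sigma$ to be a full $r$-cycle, proving the second assertion.

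Finally, for the description of $\Rplus T$: the non-asymptotic part is by construction $\{cT^n:c>0,\ n\ge 1\}$. For a nonzero asymptotic element $S=\lim_j b_jT^{n_j}$, Proposition~\ref{dych-proj-nilp}(i) forces $S_{|X_2}=0$; applying the finite-order argument to the $X_1$-component yields, along a subsequence, $T_1^{n_j}=T_1^k$ for a fixed $k\in\{0,\dots,r-1\}$, so $b_j\to c$ for some $c>0$ and $S=cT_1^k\oplus 0$. Conversely, each $cT_1^k\oplus 0$ equals $cT^kP$, which lies in $\Rplus T$ because both $T^k$ and $P$ do and $\Rplus T$ is a semigroup closed under positive scalars. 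The main obstacle is Stage~1: marrying the uniqueness of $Q$ (from commutativity, via Proposition~\ref{center}) with the dichotomy of Proposition~\ref{dych-proj-nilp} to pin $Q=P$ and hence identify $r$ with $\dim X_1$; once this bookkeeping is in place, the structure theorems already proved in earlier sections carry the remaining assertions with no further work.
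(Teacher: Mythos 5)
Your proof is correct, and its skeleton --- land in the unimodular case of Proposition~\ref{dych-proj-nilp} and then invoke the machinery of Sections~\ref{sec:sr} and~\ref{sec:up} --- is the same as the paper's; the differences are local but worth recording. To exit the dichotomy, the paper applies Proposition~\ref{sr-pR} (whose proof rules out the nilpotent case via the absence of non-zero nilpotents in $\iS$, Lemma~\ref{main-group}), whereas you use the rank-$r$ projection supplied by Theorem~\ref{irr-exist-proj} together with the fact that the nilpotent case contains no non-zero projections; both are legitimate, and yours makes the invocation of Proposition~\ref{center} essentially redundant, since the uniqueness clause of Proposition~\ref{dych-proj-nilp}\eqref{dpn-unim} already pins $Q=P$. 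The more substantive divergence is the cyclicity of $T_1$: the paper argues that a shorter cycle among the $x_i$'s would generate a proper non-zero closed $T$-invariant ideal, contradicting irreducibility, while you identify $\iG_P$ with $\langle\sigma\rangle$ (using that the powers of $T_1$ take only finitely many values, so every asymptotic element restricts to a power of $T_1$ on $Y$) and then quote the transitivity clause of Proposition~\ref{S_P-perm}, so that a transitive cyclic group must be generated by a full cycle. Your route costs the extra identification $\iG_P=\langle\sigma\rangle$, but that computation is needed anyway for the description of the asymptotic part, so nothing is wasted; the paper's route is shorter and makes the role of ideal irreducibility more transparent. Finally, you supply the converse inclusion $cT_1^k\oplus 0=cT^kP\in\Rplus T$, which the statement asserts (``consists precisely of'') but the paper's proof leaves implicit --- a worthwhile addition.
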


\begin{proof}
  By Proposition~\ref{sr-pR}, $T_1$ is unimodular. Hence, we are in
  the unimodular case of Proposition~\ref{dych-proj-nilp}. In
  particular, the peripheral spectral projection $P$ is the only
  projection in the semigroup. It follows that $r:=\minrank\Rplus
  T=\dim X_1$, $\iP_r=\{P\}$, and $X_1$ coincides with $Y$ in the
  notation of Section~\ref{sec:sr}. This implies by Theorem~\ref{main}
  and Corollary~\ref{ideal-dense} that $X_1$ is a sublattice generated
  by some disjoint sequence $x_1,\dots,x_r$ and 
  $T_1$ is a permutation of $x_i$'s. We claim that this
  permutation is a cycle of full length $r$. Indeed, otherwise, $T_1$
  has a cycle of length $m<r$, i.e., after re-numbering the basis
  vectors, $T_1$ acts as a cycle on $x_1,\dots,x_m$. But then the
  closed ideal generated by $x_1,\dots,x_m$ is invariant under $T$
  and is proper as it is disjoint with $x_{m+1},\dots,x_r$.

  It follows that $T_1^r$ is the identity of $X_1$, so that
  the set of the distinct powers of $T_1$ is, in fact, finite. Suppose that
   $0\ne S=\lim_jb_jT^{n_j}$ for some $(b_j)$ in $\mathbb R_+$
   and some strictly increasing $(n_j)$ in $\mathbb N$. By
   Proposition~\ref{dych-proj-nilp}, $S_|{X_2}=0$ and
   $S_{|X_1}=\lim_jb_jT_1^{n_j}$. Since the set of the distinct powers of
   $T_1$ is finite, it follows that $S_{|X_1}$ is a scalar multiple of a
   power of $T_1$.
\end{proof}

\begin{remark}
  \begin{enumerate}
  \item The ideal generated by $X_1$ is, clearly, invariant under
  $T$, hence it is dense in $X$. 
  \item $X_1$ is a non-zero finite-dimensional sublattice
    invariant under $T$.
  \item We observed in the proof that $P$ is
  the unique projection in the semigroup; it can, actually, be viewed as
  $T_1^0\oplus 0$.
  \end{enumerate}
\end{remark}

\begin{remark}\label{dP-comp-pR}
  Suppose that $S$ is a positive ideal irreducible operator
  such that $S^m$ is compact for some $m$. Then $S$ is strictly
  positive by \cite[Theorem~9.3]{Abramovich:02}, hence $S^m\ne 0$.
  Applying \cite[Theorem~9.19]{Abramovich:02} to $S^m$ we conclude
  that $r(S^m)\ne 0$ and, therefore, $r(S)\ne 0$. It follows that $S$ is
  peripherally Riesz. Therefore, Theorem~\ref{single} applies to
  positive ideal irreducible power compact operators.
\end{remark}

\begin{remark}
  It has been known (see, e.g.,~\cite{Niiro:66}) that if $T$ is a
  positive ideal irreducible peripherally Riesz operator then 
  $r(T)>0$, $\sigma_{\rm per}(T)=r(T)G$ where $G$ is the set of all
  $m$-th roots of unity for some $m\in\mathbb N$, and each point in
  $\sigma_{\rm per}(T)$ is a simple pole of the resolvent with
  one-dimensional eigenspace. This can now be easily deduced from
  Theorem~\ref{single}.
\end{remark}

\subsection*{Semigroups generated by two commuting operators}

de Pagter showed in~\cite{dePagter:86} that every ideal irreducible
positive compact operator on a Banach lattice has strictly positive
spectral radius. This was extended
in~\cite[Corollary~4.11]{Abramovich:92a} (see also Corollary~9.21
in~\cite{Abramovich:02}) to a pair of operators as follows: suppose
that $S$ and $K$ are two non-zero positive commuting operators such
that $S$ is ideal irreducible and $K$ is compact, then $r(S)>0$ and
$r(K)>0$. Moreover, $K$ is not even locally quasinilpotent at any
positive non-zero vector $x$, i.e.,
\begin{math}
  \liminf_n\norm{K^nx}^{\frac{1}{n}}>0,
\end{math}
see e.g., \cite[Corollary~9.19]{Abramovich:02}.
Using  the results of the preceding sections, we can
now strengthen this conclusion even further.

\begin{theorem}\label{two-op-ii}
  Under the preceding assumptions on $S$ and $K$,
  there exists a quasi-interior vector $x_0\in X_+$, a
  strictly positive functional $x_0^*$, and a positive real $\lambda$ such
  that $Sx_0=\lambda x_0$, $S^*x_0^*=\lambda x_0^*$, $Kx_0=r(K)x_0$, and
  $K^*x_0^*=r(K)x_0^*$. Furthermore, 
  \begin{math}
    \liminf_n\norm{S^nx}^{\frac{1}{n}}\ge\lambda
  \end{math}
  and
  \begin{math}
    \lim_n\norm{K^nx}^{\frac{1}{n}}=r(K)>0
  \end{math}
  whenever $x>0$.
\end{theorem}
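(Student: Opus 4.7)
My plan is to form the $\Rplus$-closed semigroup $\iS := \Rplus\{S, K\}$ and feed it into the machinery developed in Sections~\ref{sec:sr} and~\ref{sec:up}. Since $S$ and $K$ commute, the algebraic semigroup they generate is commutative, and commutativity passes to the norm closure, so $\iS$ is an $\Rplus$-closed commutative semigroup of positive operators. It is ideal irreducible because it contains $S$, and it contains the non-zero compact operator $K$. Hence Corollary~\ref{comp-min-rank} applies to give $r := \minrank\iS < \infty$ together with a rank-$r$ projection in $\iS$, and Proposition~\ref{center} upgrades this to $\iP_r = \{P\}$ thanks to commutativity.

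With the hypotheses of Sections~\ref{sec:sr} and~\ref{sec:up} now in place, I would take $Y = \Range P$ with disjoint positive basis $x_1,\dots,x_r$ as in Theorem~\ref{main}, set $x_0 := x_1 + \dots + x_r$ and $x_0^* := x_1^* + \dots + x_r^*$ (with the $x_i^*$ from Remark~\ref{dual-disj}), and appeal to Corollaries~\ref{sr-eigen} and~\ref{dual-eigen}: these produce a quasi-interior $x_0$ with $Sx_0 = r(S_Y)x_0$ and $Kx_0 = r(K_Y)x_0$, together with a strictly positive $x_0^*$ with $S^*x_0^* = r(S_Y)x_0^*$ and $K^*x_0^* = r(K_Y)x_0^*$. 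Setting $\lambda := r(S_Y)$, which is positive by Lemma~\ref{main-group}, establishes the first two eigenvector identities.

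To complete the $K$-identities I would identify $r(K_Y)$ with $r(K)$ as follows: Lemma~\ref{main-group} gives $r(K_Y) > 0$, so $r(K_Y)$ is a strictly positive eigenvalue of $K$ via the eigenvector $x_0 \neq 0$, which forces $r(K) > 0$. Together with the compactness of $K$, this makes $K$ peripherally Riesz, so Proposition~\ref{sr-pR} delivers $r(K_Y) = r(K)$. For the asymptotic statements, Corollary~\ref{up-loc-r} directly yields $\liminf_n \norm{S^n x}^{\frac{1}{n}} \ge \lambda$ and $\liminf_n \norm{K^n x}^{\frac{1}{n}} \ge r(K)$ for any $x > 0$; combining the second with the trivial bound $\limsup_n \norm{K^n x}^{\frac{1}{n}} \le r(K)$ gives the claimed limit equality. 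No step here is a genuine obstacle since the prior machinery has done all the real work; the one subtlety to watch is that ``peripherally Riesz'' requires $r(K) > 0$ by definition, so I must bootstrap through $r(K_Y) > 0$ before I may apply Proposition~\ref{sr-pR}.
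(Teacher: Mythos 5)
Your proposal is correct and follows essentially the same route as the paper's own proof: form $\iS=\Rplus\{S,K\}$, invoke commutativity to get a unique rank-$r$ projection, read off the eigenvector/eigenfunctional identities from Corollaries~\ref{sr-eigen} and~\ref{dual-eigen}, identify $r(K_Y)=r(K)$ via Proposition~\ref{sr-pR}, and finish with Corollary~\ref{up-loc-r} plus the trivial upper bound. Your explicit bootstrapping of $r(K)>0$ before applying Proposition~\ref{sr-pR} is a point the paper handles more tersely (via Lemma~\ref{main-group}), but it is the same argument.
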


\begin{proof}
  Let $\iS=\Rplus\{S,K\}$. Then $\iS$ is ideal irreducible and
  commutative, so that all the results of Sections~\ref{sec:sr}
  and~\ref{sec:up} apply. In particular, by Corollaries~\ref{sr-eigen}
  and~\ref{dual-eigen} there exist a quasi-interior vector $x_0\in
  X_+$ and a strictly positive functional $x_0^*$ such that
  $Sx_0=r(S_Y)x_0$, $S^*x_0^*=r(S_Y)x_0^*$, $Kx_0=r(K_Y)x_0$, and
  $K^*x_0^*=r(K_Y)x_0^*$. Now put $\lambda:=r(S_Y)$ and note that
  $r(K_Y)=r(K)$ by Proposition~\ref{sr-pR}. Also, observe that
  $r(S)\ge\lambda>0$ and $r(K)>0$ by Lemma~\ref{main-group}.

  It is left to show the ``furthermore'' clause. Fix $x>0$. It follows
  from Corollary~\ref{up-loc-r} that
  \begin{math}
    \liminf_n\norm{S^nx}^{\frac{1}{n}}\ge\lambda
  \end{math}
  and
  \begin{math}
    \liminf_n\norm{K^nx}^{\frac{1}{n}}\ge r(K).
  \end{math}
  However, we clearly have
  \begin{math}
    \limsup_n\norm{K^nx}^{\frac{1}{n}}\le r(K),
  \end{math}
  so that
  \begin{math}
    \lim_n\norm{K^nx}^{\frac{1}{n}}=r(K).
  \end{math}
\end{proof}

\begin{remark}\label{two-ii}
  \begin{enumerate}
  \item   It is easy to see that $\limsup_n\norm{T^nx}^{\frac{1}{n}}\le r(T)$
  for every operator $T$ and every vector $x$. Therefore, the conclusion   
  \begin{math}
    \lim_n\norm{K^nx}^{\frac{1}{n}}=r(K)
  \end{math}
  in the theorem is sharp.
  \item \label{dual-lim}
   Corollary~\ref{sr-loc-r} yields
  \begin{math}
    \liminf_n\norm{S^{*n}x^*}^{\frac{1}{n}}\ge\lambda
  \end{math}
  and
  \begin{math}
    \lim_n\norm{K^{*n}x^*}^{\frac{1}{n}}=r(K)
  \end{math}
  whenever $x^*>0$.
  \item   Clearly, the result (and the proof) remains valid if
  we require that $K$ is ideal irreducible instead of $S$.  Moreover,
  the result can be extended to any ideal irreducible commutative
  collection of operators containing a compact or a peripherally Riesz
  operator. In this case, the result will still be valid for every
  operator $S$ in the collection (with $\lambda$ depending on $S$).
  \end{enumerate}
\end{remark}

\section{Band irreducible semigroups}
\label{sec:bi}

In this section, we will show that most of the results of the
preceding sections remain valid if we replace ideal irreducibility
with band irreducibility under the additional assumption that all the
operators in $\iS$ are order continuous. This additional assumption is
justified by the following two facts. For $A\subseteq X$ we write
$I_A$ and $B_A$ for the ideal and the band generated by $A$,
respectively.  Suppose that $S$ is a positive order continuous
operator. If $S$ vanishes on a set $A\subseteq X_+$ then $S$ also
vanishes on $B_A$. Furthermore, if $J$ is an $S$-invariant ideal then
the band $B_J$ is still $S$-invariant.

\textbf{For the rest of this section, we will assume that $\iS$ is a
  semigroup of positive order continuous operators on a Banach lattice
  $X$.}  We start with a variant of Proposition~\ref{irr-chars} for
band irreducibility. Recall that for $x>0$ we write $B_{\iS x}$ for
the band generated by the orbit $\iS x$ of $x$ under $\iS$.

\begin{lemma}\label{bi-princ}
  $\iS$ is band irreducible iff $B_{\iS x}=X$ whenever $x>0$.
\end{lemma}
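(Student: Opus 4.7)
My plan mirrors the pattern of Proposition~\ref{irr-chars} and uses precisely the two order-continuity facts recalled at the start of the section (that $S$ vanishes on $B_A$ whenever it vanishes on $A\subseteq X_+$, and that $B_J$ is $S$-invariant whenever $J$ is an $S$-invariant ideal).

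Sufficiency ($\Leftarrow$) is cheap: given a non-zero $\iS$-invariant band $B$, pick any $x\in B$ with $x>0$; then $\iS$-invariance of $B$ gives $\iS x\subseteq B$, so $B_{\iS x}\subseteq B$, and the hypothesis $B_{\iS x}=X$ forces $B=X$.

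For necessity ($\Rightarrow$), fix $x>0$. The strategy is to verify that $B_{\iS x}$ is a non-zero $\iS$-invariant band and then invoke band irreducibility. Showing the ideal $I_{\iS x}$ is $\iS$-invariant is a one-line positivity computation: if $|y|\le\sum_i c_i S_i x$ with $S_i\in\iS$ and $c_i\ge 0$, then for $S\in\iS$,
\begin{displaymath}
  |Sy|\le S|y|\le\sum_i c_i(SS_i)x\in I_{\iS x}
\end{displaymath}
since each $SS_i\in\iS$. The second highlighted fact then promotes this to $\iS$-invariance of $B_{\iS x}=B_{I_{\iS x}}$.

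The only genuinely delicate step---and the one where order continuity is really used---is ruling out $\iS x=\{0\}$. Here I would introduce the ``null cone'' $A=\{y\in X_+\mid Sy=0\text{ for all }S\in\iS\}$; each $S\in\iS$ vanishes on $A$ by definition, and by the first highlighted fact it vanishes on $B_A$, so $B_A$ is trivially $\iS$-invariant. If $\iS x=\{0\}$ then $x\in A$, so $B_A$ is non-zero; band irreducibility would then force $B_A=X$, meaning $\iS=\{0\}$, but the zero semigroup leaves every band invariant and hence is not band irreducible on a non-trivially-banded lattice---a contradiction. Thus $B_{\iS x}\neq\{0\}$, and together with invariance and band irreducibility we conclude $B_{\iS x}=X$.
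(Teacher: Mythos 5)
Your proof is correct and takes essentially the same route as the paper's: the backward direction is the same (contrapositive) argument, and in the forward direction your band $B_A$ generated by the common null cone coincides with the paper's intersection $\bigcap_{S\in\iS}N_S$ of null ideals, used for the same purpose of ruling out $\iS x=\{0\}$ via band irreducibility. The only differences are cosmetic: you spell out the $\iS$-invariance of $I_{\iS x}$, which the paper declares easy, and you explicitly flag the degenerate case $\iS=\{0\}$, which the paper leaves implicit.
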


\begin{proof}
  Suppose that $\iS$ is band irreducible. It is easy to see that
  $B_{\iS x}$ is $\iS$-invariant for every $x>0$, so it suffices to
  prove that $\iS x\ne\{0\}$. For each $S\in\iS$, since $S$ is order
  continuous, its null ideal $N_S=\bigl\{x\in X\mid S\abs{x}=0\bigr\}$
  is a band. Therefore, $\bigcap_{S\in\iS}N_S$ is a band. It is easy
  to see that the intersection is $\iS$-invariant, hence it is
  zero. It follows that for every $x>0$ there exists $S\in\iS$ such
  that $Sx>0$, so that $\iS x$, and therefore $B_{\iS x}$, is
  non-zero.

  For the converse, suppose that $B$ is a non-zero proper
  $\iS$-invariant band. For each $0<x\in B$ we have $B_{\iS
    x}\subseteq B$, hence $B_{\iS x}\ne X$.
\end{proof}

\begin{proposition}\label{bi-chars}
  Suppose that $\iS$ is band irreducible. Then
  \begin{enumerate}
  \item\label{bi-ideals} every non-zero algebraic ideal in $\iS$ is
    band irreducible;
  \item\label{bi-loc} for any $x>0$ in $X$ and every order continuous
    $x^*>0$ in $X^*$ there exists $S\in\iS$ such that $\langle
    x^*,Sx\rangle\ne 0$;
  \item\label{bi-ops} $U\iS V\ne\{0\}$ for any non-zero $U,V\in
    L(X)_+$ provided that $U$ is order continuous.
  \end{enumerate}
\end{proposition}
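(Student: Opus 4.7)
The plan is to mimic the ideal-irreducible case of Proposition~\ref{irr-chars}, with Lemma~\ref{bi-princ} playing the role of condition \eqref{irr-single}. The key structural input, used throughout, is that for a positive order continuous operator $T$ (resp.\ order continuous positive functional $x^*$) the null ideal $N_T=\{y\in X:T\abs{y}=0\}$ (resp.\ $\{y\in X:x^*(\abs{y})=0\}$) is a \emph{band}, not merely an ideal.

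Parts \eqref{bi-loc} and \eqref{bi-ops} are short contradiction arguments. For \eqref{bi-loc}, if $\langle x^*,Sx\rangle=0$ for every $S\in\iS$, then $\iS x\subseteq N_{x^*}$, so $B_{\iS x}\subseteq N_{x^*}$; Lemma~\ref{bi-princ} forces $N_{x^*}=X$, contradicting $x^*>0$. For \eqref{bi-ops}, first pick $y\in X_+$ with $Vy>0$ (possible since $V\ne 0$); if $USVy=0$ for every $S\in\iS$, then $\iS(Vy)\subseteq N_U$, a band by order continuity of $U$, and Lemma~\ref{bi-princ} again yields $U=0$.

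Part \eqref{bi-ideals} is the substantive step. Let $\iJ$ be a non-zero algebraic ideal in $\iS$; it is automatically a subsemigroup of positive order continuous operators, so Lemma~\ref{bi-princ} applies to $\iJ$ once I verify $B_{\iJ x}=X$ for every $x>0$. First, to see $\iJ x\ne\{0\}$, consider $N=\bigcap_{T\in\iJ}N_T$: this is a band, and it is $\iS$-invariant, because for $T\in\iJ$, $S\in\iS$, $y\in N$ one has $TS\in\iJ$ and hence $T\abs{Sy}\le TS\abs{y}=0$; band irreducibility of $\iS$ then forces $N=\{0\}$ (otherwise every $T\in\iJ$ would vanish). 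Second, I need $B_{\iJ x}$ itself to be $\iS$-invariant, and this is where order continuity is essential: for positive order continuous $S$ and any $A\subseteq X_+$, the set $\{z\in X:S\abs{z}\in B_{SA}\}$ is an order closed ideal containing $A$, so $S(B_A)\subseteq B_{SA}$; applying this with $A=\iJ x$ and $S\iJ\subseteq\iJ$ gives $SB_{\iJ x}\subseteq B_{\iJ x}$. Thus $B_{\iJ x}$ is a non-zero $\iS$-invariant band, hence equals $X$.

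The main obstacle—and the only genuinely new ingredient compared with Proposition~\ref{irr-chars}—is the preservation of the band $B_{\iJ x}$ under $\iS$; without order continuity, ideals get preserved but bands need not, which is exactly why the order continuity hypothesis enters the setup of this section.
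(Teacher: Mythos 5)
Your proof is correct and follows essentially the same route as the paper's: parts \eqref{bi-loc} and \eqref{bi-ops} are the same null-band contradiction arguments, and in part \eqref{bi-ideals} you show, exactly as the paper does, that $B_{\iJ x}$ is a non-zero $\iS$-invariant band and then invoke Lemma~\ref{bi-princ}. The only cosmetic difference is that you establish $\iJ x\ne\{0\}$ via the $\iS$-invariant band $\bigcap_{T\in\iJ}N_T$ (re-running the argument of Lemma~\ref{bi-princ}), whereas the paper fixes a single non-zero $T\in\iJ$ and notes that $TSx=0$ for all $S\in\iS$ would force $T$ to vanish on $B_{\iS x}=X$; likewise you prove the inclusion $S(B_A)\subseteq B_{SA}$ directly where the paper passes through the $\iS$-invariance of the ideal $I_{\iJ x}$.
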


\begin{proof}
  \eqref{bi-ideals} Let $\iJ$ be
  an algebraic ideal in $\iS$. Take any $x>0$. Then $y\in I_{\iJ x}$
  iff there exist $S_1,\dots,S_n\in\iJ$ and $\lambda\in\mathbb R_+$
  such that $\abs{y}\le\lambda(S_1+\dots+S_n)x$. In this case, for any
  $S\in\iS$ we have $\abs{Sy}\le\lambda(SS_1x+\dots+SS_n)x$, so that
  $Sy$ is in $I_{\iJ x}$. It follows that $I_{\iJ x}$ and, therefore,
  $B_{\iJ x}$ is $\iS$-invariant.

  Observe that $\iJ x$ and, therefore, $B_{\iJ x}$,
  is non-zero. Indeed, suppose that $\iJ x=\{0\}$ and fix any
  non-zero $T\in\iJ$. Then
  for every $S\in\iS$ we have $TS\in\iJ$ so that $TSx=0$. It follows
  that $T$ vanishes on $\iS x$ and, therefore, on $B_{\iS x}$. But
  $B_{\iS x}=X$ by Lemma~\ref{bi-princ}, so that $T=0$; a contradiction.

  Thus, the band $B_{\iJ x}$ is $\iS$-invariant and non-zero, hence
  $B_{\iJ x}=X$. Now Lemma~\ref{bi-princ} yields the required result.

  \eqref{bi-loc} Suppose not. Then $x^*$ vanishes on $\iS x$, hence on
  $B_{\iS x}$, so that $x^*=0$; a contradiction.

  \eqref{bi-ops} Suppose not, suppose $U\iS V=\{0\}$.  Since $V\ne 0$,
  there exists $x>0$ with $Vx>0$. Then $U$ vanishes on $\iS Vx$ and,
  therefore, on $B_{\iS Vx}$, so that, by Lemma~\ref{bi-princ}, $U=0$;
  a contradiction.
\end{proof}

Next, we use the idea of the proof of Lemma~3 of~\cite{Grobler:86} to extend
Theorem~\ref{Tur-pos} to the band irreducible case.

\begin{proposition}\label{Tur-pos-bi}
  If all the operators in $\iS$ are compact and quasi-nilpotent then
  $\iS$ is band reducible. 
\end{proposition}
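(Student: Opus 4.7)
The plan is to argue by contradiction, reducing the band irreducible case to the ideal irreducible Theorem~\ref{Tur-pos}. Suppose, for contradiction, that $\iS$ is band irreducible and consists entirely of compact quasi-nilpotent order continuous positive operators on $X$.

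The first approach I would try is via the order continuous dual $X_n^*$. Since every $S\in\iS$ is order continuous, $S^*$ leaves $X_n^*$ invariant; moreover $S^*|_{X_n^*}$ is compact (because $S$ is) and quasi-nilpotent (since $r(S^*)=r(S)=0$). The key claim is that the restricted adjoint semigroup $\iS^*|_{X_n^*}=\{S^*|_{X_n^*}\mid S\in\iS\}$ is \emph{ideal} irreducible on the Banach lattice $X_n^*$. Indeed, if $J\subseteq X_n^*$ were a non-zero proper closed $\iS^*$-invariant ideal, its pre-annihilator $J_\perp:=\bigl\{x\in X\mid\langle x^*,\abs{x}\rangle=0\text{ for all }x^*\in J_+\bigr\}$ would be a closed band in $X$; invariance of $J_\perp$ under $\iS$ would follow from order continuity of the elements of $\iS$ (the key computation is that $\langle S^*x^*,\abs{Sx}\rangle\le\langle S^*x^*,S\abs{x}\rangle=\ldots$), contradicting band irreducibility via Lemma~\ref{bi-princ}. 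Applying Theorem~\ref{Tur-pos} to $\iS^*|_{X_n^*}$ then produces a non-quasi-nilpotent element, a contradiction.

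The obstacle with this approach is that the order continuous dual $X_n^*$ need not separate points of $X$, and the pre-annihilator calculation might produce a band that is zero or all of $X$. A workaround, following the idea of Lemma~3 in \cite{Grobler:86}, is to enlarge $\iS$ to an ideal irreducible semigroup $\iA$ of compact quasi-nilpotent operators by adjoining carefully chosen rank-one nilpotent operators $R=y^*\otimes y$ (with $y>0$ in $X$, $y^*>0$ in $X_n^*$, and $y^*(y)=0$). The arithmetic $RTR=y^*(Ty)R$ for any $T$ means that words in the generators of $\iA$ involving $R$ collapse to scalar multiples of rank-one operators of the form $z^*\otimes z'$, and by selecting $y,y^*$ so that $y^*$ vanishes on a suitably rich orbit of $\iS y$ (which is possible by exploiting band irreducibility and the structure of invariant null bands), one arranges that every such word is nilpotent. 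Ideal irreducibility of $\iA$, on the other hand, follows from Lemma~\ref{bi-princ}: the orbit $\iS y$ generates a dense band, so the ideal generated by the range of $R$ together with its $\iS$-orbits is all of $X$.

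The main obstacle will be the simultaneous control of ideal irreducibility and quasi-nilpotency for the enlarged semigroup $\iA$: the rank-one generators must be ``orthogonal enough'' to $\iS$ to keep all compositions quasi-nilpotent, yet ``close enough'' so that the orbits of their ranges cover $X$ in the ideal sense. Reconciling these two requirements rests squarely on the order continuity hypothesis through the fact (used in the proof of Lemma~\ref{bi-princ}) that the common null band $\bigcap_{S\in\iS} N_S$ is trivial. Once $\iA$ is produced, Theorem~\ref{Tur-pos} applied to $\iA$ delivers the contradiction and completes the proof.
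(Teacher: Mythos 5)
Neither of your two routes reaches the statement, and the second one --- the one you ultimately rely on --- cannot be repaired. Write $R=y^*\otimes y$ and take any $A,B\in\iS\cup\{I\}$. Since $(f\otimes v)^2=f(v)\,(f\otimes v)$, the word $ARB=(B^*y^*)\otimes(Ay)$ is quasi-nilpotent precisely when $y^*(BAy)=0$. So forcing \emph{every} word involving $R$ to be quasi-nilpotent forces $y^*(Ty)=0$ for every $T$ in $\iS\cup\{I\}$. But then every word $W$ in the generators of $\iA$ satisfies $y^*(Wy)=0$: if $W$ contains no $R$ this is the condition just imposed, and if $W$ contains an $R$ then already $Wy=0$, because the rightmost block gives $RA_ky=y^*(A_ky)\,y=0$. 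By continuity and homogeneity, $y^*(Ty)=0$ for all $T\in\iA$, and since $y>0$ and $y^*>0$, Proposition~\ref{irr-chars}\eqref{irr-loc} shows $\iA$ is ideal \emph{reducible}. (Adjoining a family $y_\alpha^*\otimes y_\alpha$ does not help: the acyclicity condition needed to keep all words quasi-nilpotent again makes each pair $(y_\alpha,y_\alpha^*)$ a witness of reducibility.) So the ``two requirements'' you hope to reconcile are mutually exclusive, not merely in tension. Relatedly, your justification of ideal irreducibility of $\iA$ via Lemma~\ref{bi-princ} conflates ``the band generated by the orbit is $X$'' with ``the ideal generated by the orbit is dense''; that gap is exactly the difference between band and ideal irreducibility that the proposition is about. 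Your first route is also stuck where you concede it is: a non-zero proper closed $\iS^*$-invariant ideal of the order continuous dual can have zero pre-annihilator in $X$, so no proper non-zero invariant band is produced and no contradiction results.

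The paper's argument avoids manufacturing an ideal irreducible semigroup altogether. Let $F$ be the closed ideal of $X$ generated by the union of the ranges of the operators in $\iS$ (if $\dim F\le 1$ one is done at once). The restriction of $\iS$ to $F$ is still a semigroup of compact quasi-nilpotent operators, so Theorem~\ref{Tur-pos} applied \emph{on $F$} yields a closed ideal $J$ with $0\ne J\subsetneq F$ invariant under $\iS$. Order continuity makes the band $B_J$ invariant, and $B_J$ is proper: if $B_J=X$, then each $x\in X_+$ is the supremum of an increasing net $(x_\alpha)$ in $J_+$; order continuity gives $Sx_\alpha\uparrow Sx$, compactness gives a norm-convergent subnet, hence $Sx_\alpha\to Sx$ in norm and $Sx\in J$, forcing $\Range S\subseteq J$ for every $S\in\iS$ and thus $F\subseteq J$, a contradiction. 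This restriction-to-$F$ step is the actual content of the idea from Lemma~3 of \cite{Grobler:86} that the paper invokes; no rank-one enlargement of the semigroup is involved.
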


\begin{proof}
  Let $F$ be the closed ideal generated by the union of the ranges of
  all the operators in $\iS$. We may assume, without loss of
  generality, that $\dim F>1$ as, otherwise, $F$ is a
  band and we are done. Applying Theorem~\ref{Tur-pos} to the
  restriction of $\iS$ to $F$, we find a non-zero closed
  ideal $J\subsetneq F$ such that $J$ is $\iS$-invariant. It follows that
  $B_J$ is $\iS$-invariant. It is left to show that $B_J$ is
  proper. Suppose that $B_J=X$. Then for any $x\in X_+$ we have
  $x_\alpha\uparrow x$ for some net $(x_\alpha)$ in $J_+$. Let
  $S\in\iS$. Since $S$ is order continuous, we have $Sx_\alpha\uparrow
  Sx$. Since $S$ is compact, after passing to a subnet we know
  that $(Sx_\alpha)$ converges in norm; hence $Sx_\alpha\to Sx$ in
  norm. It follows that $Sx\in J$. Since $x>0$ was arbitrary, it
  follows that $F\subseteq J$; a contradiction.
\end{proof}

\begin{numbered}\label{oc-list}
  One can now easily verify that the results of the previous sections
  remain true for band irreducible semigroups of order continuous
  operators with the following straightforward modifications.
\begin{itemize}
\item In Corollary~\ref{faith}, one has to assume that $x^*$ is
  \emph{order continuous}.
\item In~\ref{fin-rank-proj}, we now only consider order continuous
  projections. It is easy to see that the functionals $x^*_1,\dots,x^*_n$
  defined there are also order continuous.
\item Proposition~\ref{S_P} extends as long as $P$ is order
  continuous.
\item In Corollary~\ref{ideal-dense}, we now conclude that $Y$ is a
  closed $\iS$-invariant sublattice of $X$ and the band generated by
  $Y$ is all of $X$.
\item In Corollary~\ref{sr-eigen}, we replace ``quasi-interior'' with
  ``a weak unit''.
\item In~\ref{block-matrix}, we replace $\overline{I_{x_i}}$ with
  $B_{x_i}$. 

\item In Corollary~\ref{sr-loc-r} we need to assume that $x^*$ is
  $\sigma$-order continuous, because in this case we still have
  $x^*(x_0)>0$ (recall that $x_0$ is now a weak unit). In particular,
  $S^*$ is strictly positive on $\sigma$-order continuous functionals.
\item In Corollary~\ref{dual-eigen}, the functional $x_0^*$ is now
  order continuous because $x_0^*=x_1^*+\dots+x_r^*$ and
  $x_1^*,\dots,x_r^*$ are order continuous.
\item Proposition~\ref{r-sublat} remains valid for $S$. For $S^*$ we
  can only say that if there is an $\sigma$-order continuous
  eigenfunctional $x^*$ for $r(S)$ then $r(S_Y)=r(S)$ and $\abs{x^*}$
  is also in the eigenspace. Indeed, as in the proof of
  Proposition~\ref{r-sublat}, we get
  \begin{displaymath}
    r(S_Y)\abs{x^*}\le r(S)\abs{x^*}\le S^*\abs{x^*}
    \quad\mbox{and}\quad
    \bigl(S^*\abs{x^*}-r(S_Y)\abs{x^*}\bigr)(x_0)=0.
  \end{displaymath}
  Since $x^*$ is $\sigma$-order continuous, so are $\abs{x^*}$ and
  $S^*\abs{x^*}$ (because $S$ is order continuous and
  $S^*\abs{x^*}=\abs{x^*}\circ S$). It follows that
  \begin{math}
    S^*\abs{x^*}-r(S_Y)\abs{x^*}
  \end{math}
  is a $\sigma$-order continuous functional in $X_+$ vanishing on a
  weak unit $x_0$, hence $S^*\abs{x^*}=r(S_Y)\abs{x^*}$. Therefore,
  $r(S_Y)=r(S)$ and $\abs{x^*}$ is in the eigenspace.
\end{itemize}
\end{numbered}

Next, we consider finitely generated semigroups. The difficulty here
is that in order to use our previous results, we need $\Rplus T$ to
consist of order continuous operators. However, we do not know whether
this follows from the assumption that $T$ itself is order continuous
(cf. the counterexample in Section~3 of~\cite{Kitover:05a}).

\begin{lemma}\label{soc-bi-str-pos}
  Let $S$ and $T$ be two commuting non-zero positive $\sigma$-order
  continuous operators. If $T$ is band irreducible then $S$ is
  strictly positive.
\end{lemma}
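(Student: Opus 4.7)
I will argue by contradiction: suppose $S$ is not strictly positive, so there exists $x_{0}>0$ with $Sx_{0}=0$. Let $N_{S}=\{y\in X:S|y|=0\}$. Because $S$ is positive (hence continuous) and $\sigma$-order continuous, $N_{S}$ is a closed $\sigma$-ideal, and because $S,T$ commute, $N_{S}$ is $T$-invariant. Thus the whole orbit $\{T^{n}x_{0}\}_{n\ge 0}$ lies in $N_{S}$. Set $b_{n}=x_{0}+Tx_{0}+\dots+T^{n}x_{0}\in N_{S}$; then $(b_{n})$ is increasing with $Tb_{n}\le b_{n+1}$, and the ideal $I$ it generates satisfies $I\subseteq N_{S}$.

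The plan has three steps. \emph{Step 1.} Let $B$ be the band generated by $(b_{n})$ and show $B$ is $T$-invariant. The key tool is a Freudenthal-type characterization valid in every Archimedean Riesz space: for $z\in X_{+}$, $z\in B$ iff $(z-kb_{n})_{+}\downarrow 0$ jointly as $k,n\to\infty$. (Any positive $w$ bounded above by all $(z-kb_{n})_{+}$ must, by the Archimedean property applied coordinate-wise, satisfy $w\wedge b_{n}=0$ for each fixed $n$; hence $w\in B^{d}\cap B=\{0\}$.) Given this, for $z\in B_{+}$, $\sigma$-order continuity of $T$ yields $T(z-kb_{n})_{+}\downarrow 0$, and positivity combined with $Tb_{n}\le b_{n+1}$ gives $(Tz-kb_{n+1})_{+}\le T(z-kb_{n})_{+}$; hence $(Tz-kb_{n+1})_{+}\downarrow 0$ and $Tz\in B$.

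\emph{Step 2.} Since $B$ is a $T$-invariant band containing $x_{0}\ne 0$, band irreducibility of $T$ forces $B=X$. \emph{Step 3.} Given any $y\in X_{+}=B$, the characterization in Step 1 gives $(y-kb_{n})_{+}\downarrow 0$. Since $y\wedge kb_{n}\le kb_{n}\in I\subseteq N_{S}$, we have $S(y\wedge kb_{n})=0$, so $S(y-kb_{n})_{+}=Sy-S(y\wedge kb_{n})=Sy$. The $\sigma$-order continuity of $S$ then gives $S(y-kb_{n})_{+}\downarrow 0$, and since $Sy$ is constant in $k,n$, this forces $Sy=0$. Because $y\in X_{+}$ was arbitrary, $S=0$, contradicting $S\ne 0$.

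I expect the main obstacle to be Step 1: both the Freudenthal-type description of the countably-generated band $B$ and the invariance proof must be executed with only $\sigma$-order continuity of $T$ (not full order continuity) and without assuming $X$ is Dedekind $\sigma$-complete. The Archimedean property is doing the heavy lifting, and the inequality $Tb_{n}\le b_{n+1}$ is what lets the sequential $\sigma$-oc argument close the loop and deliver $T$-invariance of $B$.
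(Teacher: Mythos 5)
Your proof is correct, but it takes a genuinely different route from the paper's. The paper's argument is a two-line reduction to a \emph{principal} band: scale $T$ so that $\norm{T}<1$, let $z:=\sum_{n=0}^\infty T^nx_0$ (norm-convergent), note $Tz\le z$ and $Sz=0$, and then use the standard facts that for a ($\sigma$-)order continuous positive operator the principal band $B_z$ is invariant when $Tz\le z$ and that $S$ vanishing at $z$ forces $S$ to vanish on $B_z$; band irreducibility gives $B_z=X$ and hence $S=0$. You avoid the summability trick entirely and instead work with the band $B$ generated by the increasing partial sums $b_n$, which obliges you to prove a Freudenthal-type description of $B$ (namely $z\in B_+$ iff $(z-kb_n)_+\downarrow 0$) and to run the invariance and vanishing arguments through that description. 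Your three steps all check out: the inductive argument showing $kw\le z$ (hence $w\wedge b_n=0$ by Archimedeanness) is the standard proof of the band characterization, the inequality $(Tz-kb_{n+1})_+\le T(z-kb_n)_+$ correctly transfers membership in $B$ through $T$, and Step 3 collapses $Sy$ to $0$. One small point you should make explicit: $\sigma$-order continuity applies to sequences, while $\bigl((z-kb_n)_+\bigr)_{k,n}$ is a decreasing double family; you should pass to the cofinal diagonal sequence $(z-mb_m)_+$ before invoking it. What the paper's approach buys is brevity, at the cost of using the norm structure (completeness and $\norm{T}<1$); what yours buys is a purely order-theoretic argument that never forms an infinite series, at the cost of the band-characterization machinery.
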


\begin{proof}
  Suppose not, suppose $Sx=0$ for some $x>0$. Without loss of
  generality, $\norm{T}<1$, so that $z:=\sum_{n=0}^\infty T^nx$
  exists. Clearly, $Tz\le z$. It follows that $B_z$ in invariant under
  $T$ and, therefore, $B_z=X$. On the other hand, we have $Sz=0$, so
  that $S$ vanishes on $B_z$, so $S=0$; a contradiction.
\end{proof}

There have been several variants of the Perron-Frobenius Theorem for
band irreducible operators, see
e.g.,~\cite{Grobler:86,Grobler:87,Grobler:95,Kitover:05}.  The
following variant was proved in Theorems~5.2 and~5.3
in~\cite{Grobler:95}.

\begin{theorem}[Grobler]\label{Grobler}
  If $T$ is positive, band irreducible, $\sigma$-order continuous, and
  power compact then $r(T)>0$, $\sigma_{\rm per}(T)=r(T)G$ where $G$
  is the set of all $m$-th roots of unity for some $m\in\mathbb N$,
  and each point in $\sigma_{\rm per}(T)$ is a simple pole of the
  resolvent with one-dimensional eigenspace.
\end{theorem}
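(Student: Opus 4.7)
The plan is to reprove Grobler's theorem by adapting the proof of Theorem~\ref{single} (the singly generated ideal-irreducible case) to the band-irreducible setting, using the modifications listed in~\ref{oc-list} together with Lemma~\ref{soc-bi-str-pos}. The key observation that resolves much of the order-continuity difficulty flagged in~\ref{oc-list} is that, because $T^m$ is compact, every $T^k$ with $k\geq m$ is compact, and every element $S=\lim_j b_jT^{n_j}$ of the asymptotic part of $\Rplus T$ is a norm limit of compact operators (since $n_j\to\infty$ strictly) and is therefore itself compact. Positive compact operators on a Banach lattice are automatically order continuous, so the asymptotic part of $\Rplus T$ and every $T^k$ for $k\geq m$ are order continuous.

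\emph{Step 1: $r(T)>0$.} Lemma~\ref{soc-bi-str-pos} applied with $S=T$ shows $T$ is strictly positive, and hence $T^m$ is a non-zero strictly positive compact operator. One clean route is to invoke the band-irreducible version of de\,Pagter's theorem (e.g., from~\cite{Grobler:86}) on $T^m$ to deduce $r(T^m)>0$, whence $r(T)>0$. An in-house alternative is to let $\iJ\subseteq\iS:=\Rplus T$ be the algebraic ideal of compact elements, which is non-zero (it contains $T^m$) and band irreducible by Proposition~\ref{bi-chars}(i); Proposition~\ref{Tur-pos-bi} forbids $\iJ$ from being entirely quasi-nilpotent, and since $r(cT^n)=c\,r(T)^n$ a short analysis of the asymptotic elements forces $r(T)>0$. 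Compactness of $T^m$ gives $r_{\rm ess}(T)=0<r(T)$, so $T$ is peripherally Riesz. Rescale so that $r(T)=1$ and write $X=X_1\oplus X_2$ for the peripheral spectral decomposition, with $\dim X_1<\infty$.

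\emph{Step 2: Unimodular case and cyclic structure.} Apply Proposition~\ref{dych-proj-nilp} to $T$. In the nilpotent alternative, the asymptotic part of $\Rplus T$ would contain a non-zero square-zero operator of the form $N=C\oplus 0$. Being compact, $N$ is order continuous, and $N$ commutes with the band-irreducible $T$; Lemma~\ref{soc-bi-str-pos} applied with $S=N$ then forces $N$ to be strictly positive, contradicting $N_{|X_2}=0$ unless $X_2=\{0\}$ (the finite-dimensional degenerate case reduces to classical Perron--Frobenius, since ideals and bands coincide there). Hence the unimodular case holds: $T_1:=T_{|X_1}$ is unimodular, and the peripheral spectral projection $P$ lies in $\Rplus T$ and is automatically order continuous (being finite-rank). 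Commutativity of $\Rplus T$ combined with Proposition~\ref{center} yields $\iP_r=\{P\}$, and the band-irreducible analogues of the Sections~\ref{sec:sr}--\ref{sec:up} results collected in~\ref{oc-list} then produce disjoint positive vectors $x_1,\dots,x_r$ spanning $X_1$ on which $T_1$ acts as a permutation. As in the proof of Theorem~\ref{single}, this permutation must be a single cycle of length $r$: otherwise a shorter cycle on $\{x_1,\dots,x_\ell\}$ would make the closed ideal generated by these vectors $T$-invariant, and the band it generates would be a proper non-zero $T$-invariant band (invoking $T^m$ in place of $T$ if needed to bridge the gap between $\sigma$-order continuity of $T$ and full order continuity), contradicting band irreducibility.

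\emph{Step 3: Spectral conclusions.} A cyclic permutation of length $r$ on $\mathbb{R}^r$ is diagonalizable over $\mathbb{C}$ with spectrum equal to the set of $r$-th roots of unity, each eigenvalue simple. Therefore $\sigma_{\rm per}(T)=r(T)G$ with $G$ the group of $r$-th roots of unity, each peripheral eigenvalue has a one-dimensional eigenspace, and all peripheral eigenvalues are simple poles of the resolvent of $T$ (since $T_1$ is diagonalizable, there are no Jordan blocks). This is Grobler's theorem with $m=r$. The hardest points are Step~1 (establishing $r(T)>0$ cleanly within the paper's framework, rather than citing the classical band-irreducible de\,Pagter theorem for compact operators) and the bookkeeping in Step~2 to ensure that the machinery of Sections~\ref{sec:sr}--\ref{sec:up} applies under the weaker hypothesis that only $T^m$ and the asymptotic part of $\Rplus T$ are guaranteed order continuous, while $T$ itself is only $\sigma$-order continuous.
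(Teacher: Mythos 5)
Your overall strategy (reduce to Theorem~\ref{single} via the band-irreducible machinery of~\ref{oc-list}) is the same as the paper's, but the pivot on which your bookkeeping rests is false: positive compact operators on a Banach lattice are \emph{not} automatically order continuous. For a counterexample take $X=c$, the lattice of convergent sequences, and the positive rank-one (hence compact) operator $Px=(\lim_n x_n)\one$; the vectors $x_k=(0,\dots,0,1,1,\dots)$ with $k$ leading zeros satisfy $x_k\downarrow 0$ in $c$ while $Px_k=\one$ for every $k$, so $P$ is not even $\sigma$-order continuous. Consequently your claim that compactness of $T^k$ (for $k\ge m$) and of the asymptotic operators yields their order continuity does not stand, and with it falls your justification for applying Proposition~\ref{bi-chars}, Proposition~\ref{Tur-pos-bi}, and Lemma~\ref{soc-bi-str-pos} (with $S=N$) to these operators, as well as any hope of resolving the ``bookkeeping'' you flag at the end concerning $T$ itself and the powers $T^k$ with $k<m$.

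The paper closes this gap differently: Lemma~\ref{soc-bi-str-pos} (applied with $S=T$) shows that $T$, and hence $T^m$, is strictly positive, and then Lemma~9.30 of~\cite{Abramovich:02} --- which yields order continuity for positive operators commuting with a strictly positive compact positive operator --- gives that \emph{every} operator in $\Rplus T$, including $T$ itself and the whole asymptotic part, is order continuous. Once that is in hand, the results of Sections~\ref{sec:sr} and~\ref{sec:up}, adjusted as in~\ref{oc-list}, apply to $\Rplus T$; in particular $\Rplus T$ contains no quasinilpotent operators, so $r(T)>0$, and the proof of Theorem~\ref{single} goes through. Your Steps~2 and~3 are then essentially the paper's argument (your way of excluding the nilpotent alternative via Lemma~\ref{soc-bi-str-pos} is a pleasant shortcut, once order continuity of the asymptotic operators is legitimately established). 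So the structure of your proof is salvageable, but you must replace the false ``compact $\Rightarrow$ order continuous'' step by the strict-positivity argument.
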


We can now easily deduce this result (and more) from our
techniques. Namely, we claim that $T$ enjoys the conclusion of
Theorem~\ref{single}. In particular, the peripheral spectral subspace
of $T$ is spanned by disjoint positive vectors and $T$ acts as a
scalar multiple of a cyclic permutation on these vectors. This easily
implies the conclusion of Theorem~\ref{Grobler}.

Indeed, suppose that $T^m$ is compact. Then $T$ and, therefore, $T^m$
is strictly positive by Lemma~\ref{soc-bi-str-pos}. By Lemma~9.30
of~\cite{Abramovich:02}, all the operators in $\Rplus T$ are order
continuous. Then the results of Section~\ref{sec:sr} and~\ref{sec:up}
apply to $\Rplus T$ (again, the proofs must be adjusted as
in~\ref{oc-list}). In particular, $\Rplus T$ contains no
quasinilpotent operators, so that $r(T)>0$. For simplicity, we can
scale $T$ so that $r(T)=1$. Now, the proof of Theorem~\ref{single}
remains valid for $T$.

Next, we extend this result beyond power compact operators.

\begin{lemma}\label{RT-so-norm-cont}
  Suppose that $T\in L(X)_+$ and some power of $T$ is
  $\sigma$-order-to-norm continuous. Then every operator in the
  asymptotic part of $\Rplus T$ is $\sigma$-order-to-norm continuous.
\end{lemma}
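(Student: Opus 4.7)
The plan is to exploit two elementary closure properties of the class of $\sigma$-order-to-norm continuous operators on a Banach lattice, and then to factor the high powers $T^{n_j}$ appropriately.

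First, I would record the following two observations. (a) If $A\in L(X)$ is $\sigma$-order-to-norm continuous and $B\in L(X)$ is arbitrary, then $BA$ is $\sigma$-order-to-norm continuous: indeed, whenever $x_n\downarrow 0$ we have $\bignorm{Ax_n}\to 0$ and hence $\bignorm{BAx_n}\le\norm{B}\bignorm{Ax_n}\to 0$. (b) The class of $\sigma$-order-to-norm continuous operators is closed under operator-norm limits in $L(X)$: if $S_j\to S$ in norm with each $S_j$ $\sigma$-order-to-norm continuous, and $x_n\downarrow 0$, then $\norm{x_n}\le\norm{x_1}$ by monotonicity of the lattice norm, and for any $\varepsilon>0$ one picks $j$ large so that $\norm{S-S_j}<\varepsilon/(2\norm{x_1})$ and then $n$ large so that $\bignorm{S_jx_n}<\varepsilon/2$, yielding $\bignorm{Sx_n}<\varepsilon$.

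Now let $k\in\mathbb N$ be such that $T^k$ is $\sigma$-order-to-norm continuous, and let $S=\lim_jb_jT^{n_j}$ belong to the asymptotic part of $\Rplus T$, with $(n_j)$ strictly increasing. Since $n_j\to\infty$, we may assume without loss of generality that $n_j\ge k$ for every $j$. For such $j$ I would factor
\begin{equation*}
  T^{n_j}=T^{n_j-k}\cdot T^k,
\end{equation*}
putting $T^k$ on the \emph{right}. By observation (a) applied with $A=T^k$ and $B=T^{n_j-k}$, each $T^{n_j}$ is $\sigma$-order-to-norm continuous, and hence so is $b_jT^{n_j}$. By observation (b), the norm limit $S$ is $\sigma$-order-to-norm continuous as well.

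The only point that might be called an obstacle is choosing the correct side on which to factor out $T^k$. The alternative factorization $T^{n_j}=T^k\cdot T^{n_j-k}$ would require $T^{n_j-k}$ to send order-null sequences to order-null sequences, which we have no right to assume; putting $T^k$ on the right avoids this issue entirely because observation (a) only needs the outer factor to be bounded.
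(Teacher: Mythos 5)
Your proof is correct and follows essentially the same route as the paper's: the key step in both is the factorization $T^{n_j}=T^{n_j-k}\,T^k$ with the $\sigma$-order-to-norm continuous power acting first, combined with the norm approximation of $S$ by $b_jT^{n_j}$ (using $\norm{x_k}\le\norm{x_1}$). The paper simply runs your observations (a) and (b) as a single $\varepsilon$-argument instead of isolating them as separate closure lemmas.
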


\begin{proof}
  Suppose that $T^m$ is $\sigma$-order-to-norm
  continuous and $S=\lim_jb_jT^{n_j}$.  Suppose that $x_k\downarrow
  0$. Fix a positive real $\varepsilon$. Fix $j$ such that
  $n_j\ge m$ and $\norm{S-b_jT^{n_j}}<\varepsilon$. Observe that
  \begin{displaymath}
    \norm{Sx_k}\le\bignorm{S-b_jT^{n_j}}\norm{x_k}+\bignorm{b_jT^{n_j}x_k}
  \end{displaymath}
  Note that
  $\bignorm{S-b_jT^{n_j}}\norm{x_k}\le\varepsilon\norm{x_1}$. On the
  other hand, $x_k\downarrow 0$ yields $T^mx_k\to 0$, so that 
  \begin{math}
    b_jT^{n_j}x_k=\bigl(b_jT^{n_j-m}\bigr)T^mx_k\to 0
  \end{math}
  in norm as $k\to\infty$. It follows that $Sx_k\to 0$.
\end{proof}

\begin{corollary}\label{RT-oc}
  Suppose that $T\in L(X)_+$ is peripherally Riesz, band irreducible,
  and $\sigma$-order continuous. If some power of $T$ is
  $\sigma$-order-to-norm continuous then every operator in $\Rplus T$
  is order continuous.
\end{corollary}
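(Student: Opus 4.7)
The plan is to split $\Rplus T$ into positive scalar multiples of powers of $T$ together with the asymptotic part, and to check order continuity of each class separately. Throughout, I fix the peripheral spectral decomposition $X=X_1\oplus X_2$ of $T$, with $X_1$ finite dimensional.

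For operators in the asymptotic part, I would first apply Lemma~\ref{RT-so-norm-cont} to see that each such $S$ is $\sigma$-order-to-norm continuous. By Proposition~\ref{dych-proj-nilp}, moreover, $\Range S\subseteq X_1$, so $S$ has finite rank. Using~\ref{fin-rank-proj} together with the adjustments in~\ref{oc-list}, I would pick a positive disjoint basis $y_1,\dots,y_k$ of a finite-dimensional sublattice of $X$ containing $\Range S$, and write $S=\sum_{i=1}^k x_i^*\otimes y_i$ with $x_i^*\ge 0$ in $X^*$. In the finite-dimensional range, norm and order convergence coincide, so $\sigma$-order-to-norm continuity of $S$ forces each $x_i^*$ to be $\sigma$-order continuous. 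Since on a Banach lattice every positive $\sigma$-order continuous functional is order continuous, each rank-one summand $x_i^*\otimes y_i$ is order continuous, and hence so is $S$.

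For the powers of $T$, it suffices to show $T$ itself is order continuous, since every $T^n$ then inherits order continuity by composition (and positive scalar multiples are trivial). Lemma~\ref{soc-bi-str-pos}, applied with both arguments equal to $T$, shows $T$ is strictly positive. To upgrade from $\sigma$-order continuity to order continuity, suppose $x_\alpha\downarrow 0$ in $X$ and $Tx_\alpha\downarrow y\ge 0$; I aim to show $y=0$. If $y>0$, band irreducibility together with Lemma~\ref{bi-princ} yields that the band generated by the orbit of $y$ under $\Rplus T$ equals $X$; combined with the $\sigma$-order-to-norm continuity of $T^m$ and strict positivity of $T$, this should enable extracting a countable decreasing sequence from the net $(x_\alpha)$ whose $T^m$-images are bounded below in norm, contradicting the assumed $\sigma$-order-to-norm continuity of $T^m$.

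The main obstacle is precisely this last passage: moving from a decreasing net $x_\alpha\downarrow 0$ to a decreasing countable sequence with infimum zero is not automatic in a general Banach lattice, and must be accomplished by combining band irreducibility, strict positivity of $T$, and the hypothesis on $T^m$. This parallels the role of \cite[Lemma~9.30]{Abramovich:02} in the compact-power case treated just before the corollary.
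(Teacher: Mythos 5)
Your strategy of splitting $\Rplus T$ into scalar multiples of powers and the asymptotic part and treating them separately does not close, and both halves have genuine problems. In the asymptotic half, the step ``on a Banach lattice every positive $\sigma$-order continuous functional is order continuous'' is false: on the Banach lattice $B[0,1]$ of bounded Borel functions with the supremum norm, Lebesgue integration is $\sigma$-order continuous (by dominated convergence) but not order continuous, since the indicators of finite subsets of $[0,1]$ increase to $\one$ while their integrals remain $0$. So from the $\sigma$-order-to-norm continuity of an asymptotic operator $S$ you only obtain that the coefficient functionals $x_i^*$ are $\sigma$-order continuous, which is not enough to conclude that $S$ is order continuous. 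In the powers-of-$T$ half you concede the key point yourself: you have no mechanism for passing from a decreasing net $x_\alpha\downarrow 0$ to a countable subfamily on which the hypothesis on $T^m$ can act, and band irreducibility plus strict positivity of $T$ alone do not supply one. That passage is precisely the content of the corollary, so the proof is incomplete exactly where it matters.

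The paper resolves both difficulties at once with a single external result. By Lemma~\ref{RT-so-norm-cont} every operator in $\Rplus T$ is $\sigma$-order continuous; by Proposition~\ref{dych-proj-nilp} the semigroup $\Rplus T$ contains a non-zero finite-rank, hence compact, operator $K$, which is strictly positive by Lemma~\ref{soc-bi-str-pos}; and then Corollary~9.16 of \cite{Abramovich:02} --- a positive $\sigma$-order continuous operator commuting with a strictly positive compact operator is order continuous --- applies to every member of the commutative semigroup $\Rplus T$ simultaneously, powers and asymptotic operators alike. If you want to salvage your outline, the missing ingredient is exactly such a ``commutes with a strictly positive compact operator'' upgrade; without citing or reproving it, neither half of your argument can be completed.
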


\begin{proof}
  By Lemma~\ref{RT-so-norm-cont}, every operator in $\Rplus T$ is
  $\sigma$-order continuous. It follows from
  Proposition~\ref{dych-proj-nilp} that $\Rplus T$ contains a non-zero
  compact operator; denote it by $K$. By Lemma~\ref{soc-bi-str-pos},
  $K$ is strictly positive. The result now follows from Corollary~9.16
  of~\cite{Abramovich:02}.
\end{proof}

In particular, if $T$ is peripherally Riesz, band irreducible, and
$\sigma$-order-to-norm continuous with $r(T)=1$ then $\Rplus T$
consists of order continuous operators and, in view of the preceding
remarks, all the conclusions of Theorem~\ref{single} remain valid. The
proof is analogous. Note that this fact is a generalization
of Theorem~\ref{Grobler} because a compact positive $\sigma$-order
continuous operator is automatically $\sigma$-order-to-norm
continuous.

\medskip

Theorem~\ref{Grobler} can be extended from power compact to power
strictly singular operators%
\footnote{Note that if $T^m$ is strictly singular for some $m$ then
  $r_{\rm ess}(T)^m=r_{\rm ess}(T^m)=0$. Hence, every
  non-quasi\-nilpotent power strictly singular operator is
  peripherally Riesz.}.  Suppose that $T$ is strictly singular. It
follows from Corollary~3.4.5 on p.~193 of~\cite{Meyer-Nieberg:91} that
$T$ is order weakly compact, i.e., it takes order intervals into
relatively weakly compact sets. Suppose that, in addition, $T$ is
$\sigma$-order continuous. It is now easy to see that $T$ is
$\sigma$-order-to-norm continuous. Indeed, suppose that $x_n\downarrow
0$. Then $Tx_n\downarrow 0$ and, by Eberlein-\v Smulian Theorem there
exists a subsequence $(x_{n_k})$ such that $Tx_{n_k}$ converges
weakly. Since $(Tx_{n_k})$ is monotone, it converges in norm by
Theorem~3.52 of~\cite{Aliprantis:06}. It follows that $Tx_{n_k}\to 0$,
so that $Tx_n\to 0$. Now Corollary~\ref{RT-oc} yields the following
result.

\begin{corollary}
  Suppose that $T\ge 0$ is $\sigma$-order continuous, band
  irreducible, and power strictly singular, and $r(T)=1$.
  Then all the conclusions of Theorem~\ref{single} are valid for $T$.
\end{corollary}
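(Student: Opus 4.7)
The plan is to reduce to the framework set up by Corollary~\ref{RT-oc} and the paragraph immediately following it. Once we establish that some power of $T$ is $\sigma$-order-to-norm continuous and that $T$ is peripherally Riesz, Corollary~\ref{RT-oc} will furnish order continuity for every operator in $\Rplus T$, and then the argument already sketched in the power compact case carries over verbatim.

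First I would verify that $T$ is peripherally Riesz. This is exactly the observation in the footnote preceding the corollary: since $T^m$ is strictly singular, $r_{\rm ess}(T)^m = r_{\rm ess}(T^m) = 0$, so $r_{\rm ess}(T)=0<1=r(T)$, which places $\sigma_{\rm per}(T)$ inside the isolated eigenvalue part of $\sigma(T)$ with finite-dimensional spectral subspace.

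Next, I would check that $T^m$ is $\sigma$-order-to-norm continuous. Being a power of a $\sigma$-order continuous operator, $T^m$ is itself $\sigma$-order continuous. By \cite[Corollary~3.4.5]{Meyer-Nieberg:91}, strict singularity of $T^m$ forces order weak compactness. The two-line Eberlein-\v Smulian argument given just before the corollary then applies verbatim to $T^m$: if $x_n\downarrow 0$ then $T^mx_n\downarrow 0$ is order weakly compact, hence contains a weakly convergent subsequence; this subsequence is monotone and weakly convergent, hence norm convergent by \cite[Theorem~3.52]{Aliprantis:06}, and monotonicity of $(T^mx_n)$ promotes this to $T^mx_n\to 0$ in norm.

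With these two facts in hand, Corollary~\ref{RT-oc} applies and tells us that every operator in $\Rplus T$ is order continuous. At this point I would invoke the paragraph preceding the corollary: the band irreducible, order continuous semigroup $\Rplus T$ (which is automatically commutative and contains the peripherally Riesz operator $T$) satisfies all of the band irreducible analogues of the results of Sections~\ref{sec:sr} and~\ref{sec:up} listed in~\ref{oc-list}. The proof of Theorem~\ref{single} now runs line for line: Proposition~\ref{sr-pR} gives unimodularity of $T_{|X_1}$, Corollary~\ref{ideal-dense} (in its band irreducible form) produces a disjoint positive basis $x_1,\dots,x_r$ of $X_1$ on which $T_1$ acts as a permutation, and the irreducibility of $T$ forces this permutation to be a single $r$-cycle (if it split as a cycle of length $m<r$ then $B_{\{x_1,\dots,x_m\}}$ would be a proper $T$-invariant band). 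The description of the asymptotic part as scalar multiples of $T_1^k\oplus 0$ for $0\le k<r$ then follows from Proposition~\ref{dych-proj-nilp}\eqref{dpn-unim} exactly as in the ideal irreducible case. The only real content is the verification of $\sigma$-order-to-norm continuity above; after that, everything is a routine application of already-established machinery.
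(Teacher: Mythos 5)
Your proposal is correct and follows essentially the same route as the paper: peripheral Riesz-ness from the footnote via $r_{\rm ess}(T)=0<1=r(T)$, $\sigma$-order-to-norm continuity of $T^m$ via order weak compactness (Meyer-Nieberg) plus the Eberlein--\v Smulian/monotone-convergence argument, then Corollary~\ref{RT-oc} and the band-irreducible rerun of Theorem~\ref{single} as in~\ref{oc-list}. The only addition you make is spelling out the band-irreducible version of the full-cycle argument, which is a correct and welcome detail.
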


Finally, we can also extend Theorem~\ref{two-op-ii} as follows (it can
also be viewed as an extension of Corollary~9.34 in~\cite{Abramovich:02}).

\begin{theorem}
  Suppose that $S$ and $K$ are two non-zero commuting positive
  operators such that $K$ is compact, $\sigma$-order continuous and
  band irreducible. Then there exists a weak unit $x_0\in X_+$, a
  strictly positive functional $x_0^*$, and a positive real $\lambda$
  such that $Sx_0=\lambda x_0$, $S^*x_0^*=\lambda x_0^*$,
  $Kx_0=r(K)x_0$, and $K^*x_0^*=r(K)x_0^*$. Furthermore,
  \begin{math}
    \liminf_n\norm{S^nx}^{\frac{1}{n}}\ge\lambda
  \end{math}
  and
  \begin{math}
    \lim_n\norm{K^nx}^{\frac{1}{n}}=r(K)
  \end{math}
  whenever $x>0$.
\end{theorem}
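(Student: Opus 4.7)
The approach is to apply the band-irreducible adaptations of Sections~\ref{sec:sr} and~\ref{sec:up} (as catalogued in~\ref{oc-list}) to the commutative semigroup $\iS := \Rplus\{S,K\}$, exactly in the spirit of the proof of Theorem~\ref{two-op-ii}. Observe first that $\iS$ is $\Rplus$-closed and commutative, and is band irreducible because $K\in\iS$ is band irreducible. Moreover, $K$ is compact and $\sigma$-order continuous, hence $\sigma$-order-to-norm continuous (as noted just before the statement).

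The first substantive step is to establish that every operator in $\iS$ is order continuous, so that the framework of Section~\ref{sec:bi} applies. Applying Lemma~\ref{soc-bi-str-pos} to $K$ commuting with itself shows that $K$ is strictly positive. Then Corollary~9.16 of~\cite{Abramovich:02}, invoked exactly as in the proof of Corollary~\ref{RT-oc}, upgrades the $\sigma$-order continuity of $S$ (and hence of all products $S^iK^j$) from $\sigma$-order continuity of $K$ and the commutation $SK=KS$; in particular $S$ itself is order continuous. Asymptotic elements of $\iS$, written as $\lim_n c_nS^{i_n}K^{j_n}$, are $\sigma$-order-to-norm continuous by the argument of Lemma~\ref{RT-so-norm-cont} using the $\sigma$-order-to-norm continuity of $K$, and they also commute with $K$, so a second application of Corollary~9.16 of~\cite{Abramovich:02} yields full order continuity.

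With $\iS$ now a commutative, band-irreducible, $\Rplus$-closed semigroup of order continuous positive operators, the band-irreducible analogue of Corollary~\ref{comp-min-rank} gives $r:=\minrank\iS<\infty$. Commutativity and Proposition~\ref{center} then yield a unique rank-$r$ projection $P\in\iS$, and the list~\ref{oc-list} delivers disjoint positive vectors $x_1,\dots,x_r$ whose sum $x_0$ is a weak unit, and a strictly positive order continuous functional $x_0^*=x_1^*+\cdots+x_r^*$, such that for every non-zero $T\in\iS$,
\begin{equation*}
   Tx_0 = r(T_Y)\,x_0, \qquad T^*x_0^* = r(T_Y)\,x_0^*.
\end{equation*}
Setting $\lambda := r(S_Y)$ (which is positive by Lemma~\ref{main-group}) and noting that $K$ is peripherally Riesz, Proposition~\ref{sr-pR} identifies $r(K_Y)=r(K)$, so the four eigenvalue equations hold. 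The liminf estimates follow from the band-irreducible version of Corollary~\ref{up-loc-r}, while $\limsup_n\|K^nx\|^{1/n}\le r(K)$ is automatic, yielding the stated limit for $K$.

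The main obstacle is the first step: $S$ is only assumed to be positive in the hypothesis, not $\sigma$-order continuous, and yet the entire Section~\ref{sec:bi} framework requires order continuity of every operator in $\iS$, including asymptotic limit operators. The critical leverage is that $K$ is simultaneously compact, strictly positive (via Lemma~\ref{soc-bi-str-pos}), and a factor of every element of $\iS$ up to norm limits of pure $S$-powers, which via Corollary~9.16 of~\cite{Abramovich:02} forces the desired order continuity throughout $\iS$.
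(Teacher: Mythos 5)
Your overall strategy coincides with the paper's: form $\iS=\Rplus\{S,K\}$, note that it is commutative and band irreducible, deduce that $K$ is strictly positive from Lemma~\ref{soc-bi-str-pos}, establish that every operator in $\iS$ is order continuous, and then run the proof of Theorem~\ref{two-op-ii} with the modifications of~\ref{oc-list}. Everything after order continuity is secured (finiteness of $\minrank\iS$, uniqueness of the rank-$r$ projection via Proposition~\ref{center}, the eigenvector/eigenfunctional identities, $\lambda=r(S_Y)>0$, $r(K_Y)=r(K)$ via Proposition~\ref{sr-pR}, and the liminf/limit estimates) is correct and matches the paper.

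The gap is in how you secure order continuity, which is the crux of the band-irreducible setting. The paper cites Lemma~9.30 of~\cite{Abramovich:02}, which (as it is used both here and in the discussion following Theorem~\ref{Grobler}) yields order continuity of \emph{any} positive operator commuting with a strictly positive compact positive operator; no $\sigma$-order continuity hypothesis on that operator is required, and this is essential because $S$ is only assumed positive. You instead invoke Corollary~9.16 of~\cite{Abramovich:02} ``exactly as in the proof of Corollary~\ref{RT-oc}''; but in Corollary~\ref{RT-oc} that result is applied only \emph{after} the $\sigma$-order continuity of every operator in the semigroup has been established via Lemma~\ref{RT-so-norm-cont}, i.e., it serves to upgrade $\sigma$-order continuity to order continuity. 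Applied to $S$, your argument therefore presupposes the $\sigma$-order continuity of $S$ --- you even write ``upgrades the $\sigma$-order continuity of $S$'' --- which is not among the hypotheses. The same problem recurs for asymptotic elements of $\iS$ arising as norm limits of scalar multiples of pure powers of $S$: the argument of Lemma~\ref{RT-so-norm-cont} needs a $\sigma$-order-to-norm continuous factor, and such limits have none. You flag exactly this case (``up to norm limits of pure $S$-powers'') but do not close it. Replacing the appeal to Corollary~9.16 by an appeal to Lemma~9.30 of~\cite{Abramovich:02} removes the circularity and makes your proof identical to the paper's.
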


\begin{proof}
  Let $\iS=\Rplus\{S,K\}$. Then $\iS$ is commutative and band
  irreducible. By Lemma~\ref{soc-bi-str-pos}, $K$ is strictly
  positive. It follows from Lemma~9.30 of~\cite{Abramovich:02} that
  all the operators in $\iS$ are order continuous. Hence, all the
  results of Sections~\ref{sec:sr} and~\ref{sec:up} apply with the
  modifications described in~\ref{oc-list}. The rest of the proof is
  exactly as in Theorem~\ref{two-op-ii} with the only exception that,
  instead of being quasi-interior, $x_0$ is now a weak unit.
\end{proof}

\begin{remark}
  Using Corollary~\ref{sr-loc-r}, which remains valid for
  band-irreducible semigroups as long as $x^*$ is $\sigma$-order
  continuous, we can show, as in Remark~\ref{two-ii}\eqref{dual-lim},
  that
  \begin{math}
    \liminf_n\norm{S^{*n}x^*}^{\frac{1}{n}}\ge\lambda
  \end{math}
  and
  \begin{math}
    \lim_n\norm{K^{*n}x^*}^{\frac{1}{n}}=r(K)
  \end{math}
  whenever $x^*>0$ is $\sigma$-order continuous.
\end{remark}

\begin{remark}
  As in Theorem~\ref{two-op-ii}, the result can be extended to any
  commutative semigroup of $\sigma$-order continuous operators
  containing a band irreducible operator and a non-zero compact
  operator. Indeed, by Lemma~\ref{soc-bi-str-pos}, the compact
  operator is strictly positive, so that all the operators in the
  semigroup are order continuous
  by~\cite[Lemma~9.30]{Abramovich:02}. Now we can apply results of
  Sections~\ref{sec:sr} and~\ref{sec:up}.
\end{remark}

\section{One-sided ideals of $\iS$}
\label{sec:ideals}

Some of the properties of an irreducible semigroup can be
characterized in terms of minimal right ideals of $\iS$. Throughout
this section, we assume that $\iS$ is an $\Rplus$-closed ideal
irreducible semigroup of positive operators on a Banach lattice $X$
with $r=\minrank\iS<+\infty$. We write $\iP_r$ for the set of all
projections of rank $r$ in $\iS$.

\begin{lemma}\label{rideal-minproj}
  Every non-zero (right or left) ideal in $\iS$ contains a projection of rank
  $r$.
\end{lemma}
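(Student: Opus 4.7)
My plan is to reduce to the rank-$r$ projection machinery of Section~\ref{sec:fin-rank}: first force a rank-$r$ element into $\iJ$, then upgrade it to a projection via Remark~\ref{dych-no-nilp}.

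Assume first that $\iJ$ is a non-zero right ideal of $\iS$; fix $T\in\iJ\setminus\{0\}$ and any non-zero $S_0\in\iS_r$. Applying Proposition~\ref{irr-chars}\eqref{irr-ops} to the positive operators $T$ and $S_0$ yields $U\in\iS$ with $S:=TUS_0\ne 0$. Since $\iJ$ is a right ideal, $S=T\cdot(US_0)\in\iJ\iS\subseteq\iJ$, and $0<\rank S\le\rank S_0=r$ forces $\rank S=r$, so $S\in\iS_r\cap\iJ$. By Lemma~\ref{rAS1} there exists $A\in\iS$ such that $AS$ is not quasi\-nilpotent, whence $r(SA)=r(AS)>0$. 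Since $\rank(SA)\le r=\minrank\iS$, we get $\rank(SA)=r=\minrank\Rplus(SA)$, so Remark~\ref{dych-no-nilp} applies to $SA$: its peripheral spectral projection $P$ lies in $\Rplus(SA)$, has $\rank P=r$, and is obtainable as a norm limit of positive scalar multiples of powers of $SA$. The identity
\[
(SA)^n=S\,(AS)^{n-1}A\in\iJ\cdot\iS\subseteq\iJ\qquad(n\ge 1),
\]
combined with the $\Rplus$-closure of $\iJ$, then places $P\in\iJ\cap\iP_r$.

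For a left ideal $\iJ$ the argument is symmetric. Proposition~\ref{irr-chars}\eqref{irr-ops} applied to $S_0$ and $T$ produces $S:=S_0UT\in\iS\iJ\subseteq\iJ$ of rank $r$; Lemma~\ref{rAS1} again supplies $A\in\iS$ with $AS$ non-quasi\-nilpotent. This time we use
\[
(AS)^n=(AS)^{n-1}\cdot(AS)\in\iS\cdot\iJ\subseteq\iJ,
\]
verified by induction from the base case $AS=A\cdot S\in\iS\iJ\subseteq\iJ$, and conclude that the peripheral spectral projection of $AS$ (again of rank $r$ by the same remark) lies in $\iJ$.

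The only genuine subtlety is that the projection arises as a norm limit of scalar multiples of powers, so one must interpret ``ideal'' as $\Rplus$-closed one-sided ideal (equivalently, replace $\iJ$ by its $\Rplus$-closure, which is still a one-sided ideal of $\iS$ by the same continuity argument that was used to verify that $\iS_r$ is $\Rplus$-closed). Once this is granted, the proof is a direct composition of Proposition~\ref{irr-chars}\eqref{irr-ops}, Lemma~\ref{rAS1}, and Remark~\ref{dych-no-nilp}; the two sides of the semigroup action in $\iJ\iS\subseteq\iJ$ versus $\iS\iJ\subseteq\iJ$ dictate whether one feeds $SA$ or $AS$ into the peripheral spectral projection construction.
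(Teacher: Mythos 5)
There is a genuine gap at the final step. The lemma is about arbitrary algebraic one-sided ideals of $\iS$ --- and it is applied later (Corollary~\ref{rideals}, Proposition~\ref{sr-rid}) to minimal right ideals and to right ideals of the form $QSP\iS$, none of which is assumed to be norm closed. Your construction produces the rank-$r$ projection only as a norm limit of positive multiples of powers $(SA)^n\in\iJ$, so it lands in the closure of $\iJ$, not in $\iJ$ itself. Passing to the $\Rplus$-closure of $\iJ$ is not ``equivalent'': it proves a strictly weaker statement, and reinterpreting ``ideal'' as ``$\Rplus$-closed ideal'' changes the lemma into something that no longer covers its later uses. Everything up to and including the production of a non-nilpotent rank-$r$ element $T$ of $\iJ$ (with $r(T)=1$ after scaling) is fine and matches the paper.

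The missing idea is an algebraic factorization of $P$ through $T$. Once $T\in\iJ$ has rank $r$ and $r(T)=1$, Remark~\ref{dych-no-nilp} places the peripheral spectral projection $P$ of $T$ in $\iS$ --- the limits here are taken inside $\iS$, which \emph{is} $\Rplus$-closed, so this step is legitimate --- and gives $\Range P=\Range T$, hence $PT=T$, while $TP=PT$ because $P$ is a spectral projection of $T$. Now Proposition~\ref{S_P-perm} makes $PTP$ invertible in the group $\iG_P$ up to scaling: there is $S\in\iS$ with $(PTP)(PSP)=P$. Since $PT=T$ this reads $P=T\cdot(PSP)$ with $PSP\in\iS$, so $P\in\iJ$ for a right ideal by a single multiplication, with no limit ever taken inside $\iJ$; for a left ideal one uses $(PSP)(PTP)=P$ together with $TP=T$ to get $P=(PSP)\cdot T\in\iJ$. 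This is exactly how the paper closes the argument, and it is the step you need to replace your limiting procedure with.
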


\begin{proof}
  Let $\iJ$ be a right ideal in $\iS$. Take any $0\ne T\in\iJ$. Since
  $\iS_r$ is ideal irreducible by Remark~\ref{S_r}, $T\iS_r\ne\{0\}$
  by Proposition~\ref{irr-chars}\eqref{irr-ops}. Replacing $T$ with a
  non-zero operator in $T\iS_r$ we may assume without loss of
  generality that $\rank T=r$. By Lemma~\ref{rAS1}, there exists
  $A\in\iS$ with $r(TA)=1$; replacing $T$ with $TA$ we may assume that
  $r(T)=1$. Let $P$ be the spectral projection of $T$ for $\sigma_{\rm
    per}(T)$, then $P\in\iS$ by Remark~\ref{dych-no-nilp}. It follows
  that $\rank P=\rank T=r$ and, therefore, $PT=T$. Also, by
  Proposition~\ref{S_P-perm}, $PTP$ is invertible in the sense that
  there exists $S\in\iS$ such that $(PTP)(PSP)=P$. It follows that
  $TPSP=P$, so that $P\in\iJ$. The proof for a left ideal is similar
  because $r(TA)=r(AT)$ and $T$ commutes with $P$.
\end{proof}

\begin{corollary}\label{rideals}
  Minimal right ideals in $\iS$ are exactly of form $P\iS$, where $P\in\iP_r$.
  In this case, $P\iS$ is an ideal iff $\Range P$ is $\iS$-invariant.
\end{corollary}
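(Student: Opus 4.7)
My plan is to exploit Lemma~\ref{rideal-minproj}, which places a rank-$r$ projection inside every non-zero right ideal, combined with the elementary fact that two idempotents $P$ and $Q$ with $\Range P=\Range Q$ satisfy $PQ=Q$ and $QP=P$ (each acts as the identity on the common range). I would treat the two assertions separately.

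For the characterization of minimal right ideals, I would first verify that for every $P\in\iP_r$ the set $P\iS$ is a non-zero right ideal: it is closed under right multiplication by $\iS$, and $P=P\cdot P\in P\iS$ since $P\in\iS$. To establish minimality, I take a non-zero right ideal $\iJ\subseteq P\iS$; Lemma~\ref{rideal-minproj} supplies some $Q\in\iP_r\cap\iJ$. Writing $Q=PS_0$ with $S_0\in\iS$ gives $PQ=P^2S_0=Q$, so $\Range Q\subseteq\Range P$, and rank matching yields $\Range Q=\Range P$. The projection identity then gives $QP=P$, and since $\iJ$ is a right ideal containing $Q$ we conclude $P=QP\in\iJ$, hence $P\iS\subseteq\iJ$ and $\iJ=P\iS$. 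Conversely, given any minimal right ideal $\iJ$, Lemma~\ref{rideal-minproj} furnishes $P\in\iP_r\cap\iJ$; then $P\iS\subseteq\iJ$ is a non-zero right ideal, and equality follows from minimality.

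For the second claim, I would show that $\Range P$ is $\iS$-invariant if and only if $SP\in P\iS$ for every $S\in\iS$. If $S(\Range P)\subseteq\Range P$, then $\Range SP\subseteq\Range P$, so $P(SP)=SP$, i.e.\ $SP\in P\iS$; combined with $P\iS$ already being a right ideal, this makes it two-sided. Conversely, if $P\iS$ is two-sided then $SP=PT$ for some $T\in\iS$, and so $S(\Range P)=\Range SP=\Range PT\subseteq\Range P$.

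I do not expect a serious obstacle. The only step that requires a little care is the chain $Q=PS_0\Rightarrow PQ=Q\Rightarrow\Range Q=\Range P\Rightarrow QP=P$, which is the mechanism by which the rank-$r$ projection extracted from a sub-right-ideal recovers $P$ itself and thereby all of $P\iS$; both directions of the second assertion are then immediate translations between the range condition and the multiplicative condition.
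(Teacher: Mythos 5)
Your proposal is correct and follows essentially the same route as the paper: extract a rank-$r$ projection from any non-zero right sub-ideal via Lemma~\ref{rideal-minproj}, use the rank count to identify its range with $\Range P$ and hence recover $P$ itself, and translate two-sidedness of $P\iS$ into the range condition through $SP=PSP$. The only cosmetic difference is that the paper writes the left-ideal closure directly as $TPS=PTPS\in P\iS$ rather than via $SP\in P\iS$ plus right-multiplication, which is the same computation.
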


\begin{proof}
  Suppose $\iJ$ is a minimal right ideal. By
  Lemma~\ref{rideal-minproj}, there is a projection $P$ in $\iJ$ with
  $\rank P=r$.  Since $P\iS$ is a right ideal, by minimality we have
  $P\iS=\iJ$.

  Conversely, suppose that $P\in\iP_r$; show that $P\iS$ is a minimal
  right ideal. Suppose that $\iJ$ is a non-zero right ideal in $\iS$ and
  $\iJ\subseteq P\iS$. Again, by Lemma~\ref{rideal-minproj}, there
  exists a projection $Q\in\iJ$ such that $\rank Q=r$. It follows from
  $Q\in P\iS$ that $\Range Q=\Range P$. Therefore, $QP=P$, so that
  $P\in\iJ$.  Hence, $P\iS=\iJ$.

  If $P\iS$ is an ideal then $SP=SP^2\in SP\iS\subseteq P\iS$ for
  every $S\in\iS$, so that $S(\Range P)=\Range SP\subseteq\Range
  P$. Conversely, if $\Range P$ is $\iS$-invariant then for any
  $S,T\in\iS$ we have $\Range TPS\subseteq\Range P$, so that
  $TPS=PTPS\in P\iS$; hence $P\iS$ is an ideal.
\end{proof}

The next fact can be viewed as an extension of Proposition~\ref{sr}.

\begin{proposition}\label{sr-rid}
  The following are equivalent.
  \begin{enumerate}
  \item\label{sr-rid-proj} All projections in $\iP_r$ have the same range;
  \item\label{sr-all-rid} All minimal right ideal in $\iS$ are ideals;
  \item\label{sr-some-rid} Some minimal right ideal in $\iS$ is an ideal;
  \item\label{sr-uniq-id} $\iS$ has a unique minimal right ideal.
  \end{enumerate}
\end{proposition}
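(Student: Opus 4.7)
The plan is to reduce this proposition to the already-established Proposition~\ref{sr} (on common ranges) combined with Corollary~\ref{rideals} (which classifies minimal right ideals as $P\iS$ for $P\in\iP_r$, and characterizes when such an ideal is two-sided). The ring of implications I intend to prove is $\eqref{sr-rid-proj}\Rightarrow\eqref{sr-uniq-id}\Rightarrow\eqref{sr-rid-proj}$ together with $\eqref{sr-rid-proj}\Rightarrow\eqref{sr-all-rid}\Rightarrow\eqref{sr-some-rid}\Rightarrow\eqref{sr-rid-proj}$.

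For $\eqref{sr-rid-proj}\Rightarrow\eqref{sr-all-rid}$, let $\iJ$ be any minimal right ideal. By Corollary~\ref{rideals}, $\iJ=P\iS$ for some $P\in\iP_r$. Since all projections in $\iP_r$ share the common range, Proposition~\ref{sr}\eqref{sr-pres} implies $\Range P$ is $\iS$-invariant, and then Corollary~\ref{rideals} gives that $P\iS$ is a two-sided ideal. The implication $\eqref{sr-all-rid}\Rightarrow\eqref{sr-some-rid}$ is immediate since $\iP_r$ is non-empty (hence a minimal right ideal exists). For $\eqref{sr-some-rid}\Rightarrow\eqref{sr-rid-proj}$, if the minimal right ideal $P\iS$ is two-sided, then Corollary~\ref{rideals} says $\Range P$ is $\iS$-invariant, and then Proposition~\ref{sr}\eqref{sr-some-inv}$\Rightarrow$\eqref{sr-proj} yields the conclusion.

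For $\eqref{sr-rid-proj}\Rightarrow\eqref{sr-uniq-id}$, the key observation is that any two projections $P,Q\in\iP_r$ with a common range $Y$ satisfy $QP=P$ and $PQ=Q$, because each of $P,Q$ acts as the identity on $Y$ (being a projection onto $Y$) and the other maps into $Y$. Consequently $P=QP\in Q\iS$, so $P\iS\subseteq Q\iS$, and by symmetry $Q\iS\subseteq P\iS$. Since by Corollary~\ref{rideals} every minimal right ideal has the form $P\iS$ for some $P\in\iP_r$, and all such coincide, uniqueness follows. Conversely, for $\eqref{sr-uniq-id}\Rightarrow\eqref{sr-rid-proj}$, let $\iJ$ be the unique minimal right ideal and fix $P\in\iP_r$ with $\iJ=P\iS$. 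For any $Q\in\iP_r$, Corollary~\ref{rideals} gives that $Q\iS$ is also a minimal right ideal, so by uniqueness $Q\iS=P\iS$; in particular $Q=Q^2\in P\iS$, so $Q=PT$ for some $T\in\iS$, whence $\Range Q\subseteq\Range P$. A dimension count ($\rank Q=\rank P=r$) forces equality.

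There is no real obstacle: the entire argument is bookkeeping on top of Corollary~\ref{rideals} and Proposition~\ref{sr}. The only step that requires a brief calculation is the identity $PQ=Q$, $QP=P$ for two projections with equal range, which makes $\eqref{sr-rid-proj}\Rightarrow\eqref{sr-uniq-id}$ work cleanly; the converse requires only that containment of ranges together with equal ranks forces equality of ranges.
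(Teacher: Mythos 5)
Your proof is correct and rests on exactly the same two ingredients as the paper's (Corollary~\ref{rideals} and Proposition~\ref{sr}); the only difference is the arrangement of the implication cycle: the paper proves \eqref{sr-some-rid}$\Rightarrow$\eqref{sr-uniq-id} directly (via $Q\iS P\ne\{0\}$ and minimality), whereas you route through \eqref{sr-some-rid}$\Rightarrow$\eqref{sr-rid-proj}$\Rightarrow$\eqref{sr-uniq-id}, using the identity $QP=P$, $PQ=Q$ for projections with a common range. Your \eqref{sr-rid-proj}$\Rightarrow$\eqref{sr-all-rid}, \eqref{sr-all-rid}$\Rightarrow$\eqref{sr-some-rid}, and \eqref{sr-uniq-id}$\Rightarrow$\eqref{sr-rid-proj} steps coincide with the paper's, so this is essentially the same proof.
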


\begin{proof}
  \eqref{sr-rid-proj}$\Rightarrow$\eqref{sr-all-rid} follows from
  Proposition~\ref{sr} and Corollary~\ref{rideals}.

  \eqref{sr-all-rid}$\Rightarrow$\eqref{sr-some-rid} is trivial.

  \eqref{sr-some-rid}$\Rightarrow$\eqref{sr-uniq-id} Suppose that
  $P\iS$ is an ideal for some $P\in\iP_r$. Let $Q\in\iP_r$. Since
  $Q\iS P\ne\{0\}$ by Proposition~\ref{irr-chars}\eqref{irr-ops}, we
  have $QSP\ne 0$ for some $S\in\iS$. Note that $QSP\iS$ is a right
  ideal and $QSP\iS\subseteq Q\iS$, it follows from minimality that
  $QSP\iS=Q\iS$. On the other hand, since $P\iS$ is an ideal,
  $QSP\iS\subseteq P\iS$, so that $Q\iS\subseteq P\iS$. Again, by
  minimality, we have $Q\iS=P\iS$. 

  \eqref{sr-uniq-id}$\Rightarrow$\eqref{sr-rid-proj} Let
  $P,Q\in\iP_r$. Then $P=P^2\in P\iS=Q\iS$, hence
  $\Range P\subseteq\Range Q$. Similarly, $\Range Q\subseteq\Range P$.
\end{proof}

Corollary~\ref{rideals} and Proposition~\ref{sr-rid} extend
Lemmas~5.2.3, 5.2.4(i,ix-xi), and~8.7.18 in~\cite{Radjavi:00}.

\providecommand{\bysame}{\leavevmode\hbox to3em{\hrulefill}\thinspace}
\providecommand{\MR}{\relax\ifhmode\unskip\space\fi MR }

\end{document}